\newtheorem{thm}{Theorem}[section]
\newtheorem{lem}[thm]{Lemma}
\newtheorem{deff}[thm]{Definition}
\newtheorem{prop}[thm]{Proposition}
\newtheorem{cor}[thm]{Corollary}
\newtheorem{rk}[thm]{Remark}
\newtheorem{ex}[thm]{Example}
\newenvironment{exmp}{\begin{ex}
		\rm }{\end{ex}}
\newenvironment{remar}{\begin{rk}			\rm }{\end{rk}}
\newenvironment{defn}{\begin{deff}
		\rm }{\end{deff}}
\newcommand{\mc}{\mathcal}
\newcommand{\mf}{\mathfrak}
\newcommand{\ol}{\overline}
\def \l2x{L^2(X,\mc H)}
\DeclareMathOperator*{\supp}{supp}
\theoremstyle{definition}
\theoremstyle{remark}
\numberwithin{equation}{section}
\newcommand\reallywidehat[1]{%
	\savestack{\tmpbox}{\stretchto{%
			\scaleto{%
				\scalerel*[\widthof{\ensuremath{#1}}]{\kern.1pt\mathchar"0362\kern.1pt}%
				{\rule{0ex}{\textheight}}%WIDTH-LIMITED CIRCUMFLEX
			}{\textheight}% 
		}{2.4ex}}%
	\stackon[-6.9pt]{#1}{\tmpbox}%
}
\begin{document}
	\sloppy
	\title[Ramanujan sums  in signal recovery ]{
		Ramanujan sums in signal recovery and uncertainty principle inequalities}

	%	\title[Sampling for MI subspaces ]{ Shannon sampling formula associated to Multiplication Invariant Subspaces
		%	}
	%\title[APPROXIMATION for NILPOTENT LIE GROUP]{APPROXIMATION for NILPOTENT LIE GROUP\\
		%	}
	%%%%%%%%%%%%%%%%%%%%%%%%%%%%%%%%%%%%%%%%%%%%%%%%%%%%%
	%    Information for first author
	%	\author{Sahil}
	\author{Sahil Kalra}
	\address{Department of Mathematics,
		Indian Institute of Technology Indore,
		 Khandwa Road, Simrol,
		Indore, Madhya Pradesh- 453552}
	\email{sahilkalramath@gmail.com, nirajshukla@iiti.ac.in}
	\thanks{S.K. gratefully acknowledges the financial support from University Grant Commission (UGC)- Ref. No.: 191620003953 and N.K.S. acknowledges the financial support from DST-SERB Project [MTR/2022/000176]. The authors also acknowledge the facilities of the Bhaskaracharya Mathematics Laboratory, IIT Indore, supported by the DST-FIST Project (File No.: SR/FST/MS I/2018/26).}
	\author{Niraj K. Shukla}

	\subjclass[2020]{42C15, 94A08, 94A12, 94A20, 15A03}
	\keywords{Ramanujan sums,  Filter bank, Frame, Uncertainty principle, Erasure}
	\maketitle
	\begin{abstract}
		This paper explores the perfect reconstruction property of filter banks based on Ramanujan sums and their applications in signal recovery. Originally introduced by Srinivasa Ramanujan, Ramanujan sums serve as powerful tools for extracting periodic components from signals and form the foundation of Ramanujan filter banks. We investigate the perfect reconstruction property of these filter banks and analyze their robustness against erasures for discrete-time signals in a finite-dimensional space $\ell^2(\mathbb Z_N)$ \,(i.e., $\mathbb C^N$). The study is further extended to non-uniform Ramanujan filter banks, showcasing their ability to address the limitations of uniform ones. Employing  the reconstruction properties of uniform Ramanujan filter banks, we present an uncertainty principle associated with a tight frame of shifts of Ramanujan sums. This principle establishes representation inequalities in terms of Euler's totient function $\phi(n)$ that provide sufficient conditions for the perfect recovery of signals in scenarios where signal information is lost during transmission or corrupted by noise. Finally, we illustrate that utilizing the signal’s periodicity information through Ramanujan filter banks significantly improves the efficiency of signal recovery optimization algorithms, resulting in enhanced signal-to-noise ratio (SNR) gains and more precise reconstruction.
		
	\end{abstract}
	
	\section{Introduction}	%	
	
	This paper investigates the   perfect reconstruction property of  filter banks based on Ramanujan sums and their applications in  erasure handling/signal recovery. The Ramanujan sum was introduced by 	the  Indian mathematician Srinivasa Ramanujan
	in 1918.	For a positive integer $q$, the  \textit{$q$-th Ramanujan sum} $c_q$ has the following representation
	\cite{ramanujan1918certain}:
	\begin{equation}\label{eq:Ramanujansum}
		c_q(n)=\sum_{\substack{k=1\\(k,q)= 1}}^q e^{2\pi i k n /q},\quad n\geq1,
	\end{equation}
	where $(k,q)$ denotes the greatest common divisor of $k$ and $q.$	
	Ramanujan showed that several   arithmetic functions in number theory, such as the Euler's totient function $\phi(n)$ and the Möbius function $\mu(n)$, can be expressed as linear combinations of Ramanujan sums.  In particular, the collection of \( \phi(q) \)-successive circular shifts of the  $q$-th Ramanujan sum, i.e.,
	$
	c_q, c_q(\cdot- 1), \ldots, c_q(\cdot - \phi(q) + 1)$
	is linearly independent and generates a $\phi(q)$-dimensional subspace, known as a $\textit{Ramanujan subspace}$ For further details, we refer the reader to \cite{vaidyanathan2014ramanujan1, vaidyanathan2014ramanujan2}.
	
	In recent years, Ramanujan sums and Ramanujan subspaces have gained attention in signal processing due to their ability to extract periodic components from discrete-time signals \cite{ planat2002ramanujan,  planat2009ramanujan, vaidyanathan2014ramanujan1}.  The primary objective of our study is to analyze the perfect reconstruction property of the filter bank associated with Ramanujan sums, also known as the \textit{Ramanujan filter bank}, in the finite-dimensional setting of $\ell^2(\mathbb{Z}_N)$ (i.e., $\mathbb{C}^N$), where $\ell^2(\mathbb{Z}_N)$ refers to the $N$-dimensional Hilbert space of functions defined on the cyclic group $\mathbb{Z}_N = \{0, 1, \dots, N-1\}$. This space is equipped with the inner  product 
	$
	\langle x, y \rangle = \sum_{n=0}^{N-1} x(n) \overline{y(n)},
	$
	where \(\overline{y(n)}\) is the complex conjugate of \(y(n)\) and $x=(x(0),x(1), \hdots, x(N-1))$, $y=(y(0),y(1), \hdots, y(N-1)) \in \ell^2(\mathbb Z_N).$ Any signal \(x \in \ell^2(\mathbb{Z}_N)\) is periodic with period \(N\), i.e., \(x(n + N) = x(n)\) for all \(n \in \mathbb{Z}\).  Periodic signals represent a significant class of signals, as they often encapsulate critical information about the observed phenomena. Examples of periodic signals include segments of tonal music, voiced speech, ECG signals, tandem repeats in DNA associated with genetic disorders, and protein repeats in amino acids that influence their structure and binding properties \cite{tenneti2016detecting, cristea2003large, sameni2008multichannel}.
	
	%	
	%	Vaidyanathan and Tenneti \cite{vaidyanathan2015properties} studied filter banks based on Ramanujan sums and demonstrated their effectiveness in identifying the period of an unknown signal. 
	%	
	%\textcolor{red}{write a paragraph connecting Ramanujan sums with applications, including Hardy and theory of filter banks.}
	Ramanujan filter banks possess unique properties that make them exceptionally well-suited for detecting periodic patterns in real-time data \cite{vaidyanathan2015properties,   tenneti2016detection, abraham2021design, tenneti2015ramanujan}.  Filter banks have consistently attracted the attention of researchers for constructing frames and wavelets \cite{BOWNIK2023229, BOWNIK20091065, christensen2017explicit,  han2022multivariate, han2000frames, han2007frames, christensen2016frames, han2018framelets, bolcskei1998frame, cvetkovic1998oversampled}. Frames derived from filter banks play a pivotal role in various fields, including signal processing, image processing,  data analysis, wavelets, shearlets, Gabor systems, etc. For instance, filter banks are used to extract distinguishing features from data, thereby improving pattern recognition tasks such as fingerprint recognition and face detection. Additionally, filter banks are integral to the construction of multiresolution analysis (MRA) systems, which enable the representation of signals or data at varying levels of detail and  form the foundation for applications like signal compression, denoising, and feature extraction \cite{BOWNIK2023229, BOWNIK20091065, han2000frames, christensen2016frames}.

	%	Various researchers have utilized the perfect reconstruction property of  filter banks for signal processing. For that, we refer the reader to \cite{BOWNIK2023229, bolcskei1998frame, bolcskei1998oversampled}.
	%	
	
	% The primary objective of our study is to analyze the perfect reconstruction property of Ramanujan filter banks in the finite-dimensional setting of $\ell^2(\mathbb{Z}_N)$ (i.e., $\mathbb{C}^N$), where $\ell^2(\mathbb{Z}_N)$ refers to the $N$-dimensional Hilbert space of functions defined on the cyclic group $\mathbb{Z}_N = \{0, 1, \dots, N-1\}$. This space is equipped with the inner  product 
	%$
	%\langle x, y \rangle = \sum_{n=0}^{N-1} x(n) \overline{y(n)},
	%$
	%where \(\overline{y(n)}\) is the complex conjugate of \(y(n)\) and $x=(x(0),x(1), \hdots, x(N-1))$, $y=(y(0),y(1), \hdots, y(N-1)) \in \ell^2(\mathbb Z_N).$ Any signal \(x \in \ell^2(\mathbb{Z}_N)\) is periodic with period \(N\), i.e., \(x(n + N) = x(n)\) for all \(n \in \mathbb{Z}\).  
	
	Vaidyanathan and Tenneti \cite{vaidyanathan2015properties} studied  Ramanujan filter banks and demonstrated their effectiveness in identifying the period of an unknown signal.  However, when the period $P$ is large, their approach of finding the period through Ramanujan  filter banks can be computationally expensive. 
	We observe that the period identification process can be accelerated in such scenarios  if we know that $P$ is a divisor of the signal's length. Our approach   utilizes a $K$-channel filter bank,  where the  $i$-th input channel is based on the $q_i$-th Ramanujan sum $c_{q_i} \in \ell^2(\mathbb Z_N)$ for $1 \leq i \leq K, $ and \(q_1, q_2, \dots, q_K\) are all divisors of \(N\).  
		The analysis phase of this $K$-channel filter bank,  based on  the Ramanujan sums \(c_{q_1}, c_{q_2}, \hdots, c_{q_K}\),  is depicted in Figure~\ref{Filter bank}. Furthermore, we study the perfect reconstruction capabilities of these filter banks under decimation. 
		The motivation for studying decimated Ramanujan filter banks arises from the broader theory of  decimated filter banks and their characterization via frame theory
		~\cite{cvetkovic1998oversampled, fickus2005convolutional,fickusfusion2011}.

	Let \( N = pM \) for some \( p, M \in \mathbb{N} \), and fix \( 1 \leq i \leq K \). For a signal \( x \in \ell^2(\mathbb{Z}_N) \), the output of the \( q_i \)-th Ramanujan filter is given by the discrete circular convolution \( x * \tilde{c}_{q_i} \), where \( \tilde{c}_q(n) := \overline{c_q(-n)} \) denotes the involution of \( c_q \). Sampling this output at indices \( \{0, p, 2p, \ldots, p(M-1)\} \) corresponds to evaluating inner products with translated filters:
	\begin{equation}\label{eq:outputintro}
	y_{q_i}=	(x * \tilde{c}_{q_i})(pk) = \langle x, L_{pk} c_{q_i} \rangle, \quad 0 \leq k \leq M-1.
	\end{equation}
	For $m \in \mathbb Z,$ the \textit{circular shift/translation operator} $L_m$   on $\ell^2(\mathbb Z_N)$ is defined by $L_mx(n)=x(n-m)$ for all $n \in \mathbb Z$ and $x \in \ell^2(\mathbb Z_N).$ 
	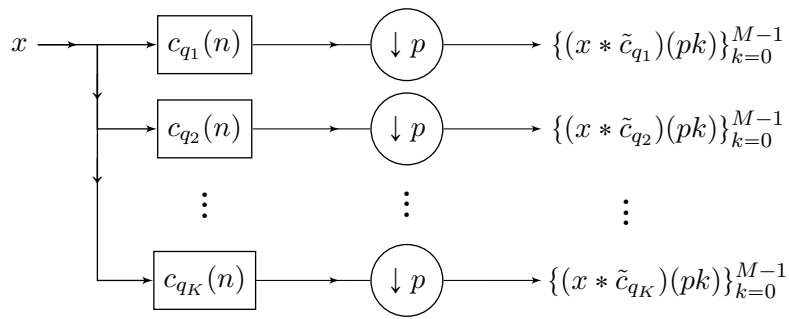
\begin{figure}[!b]\label{fig:rfb}
		\begin{center}
			\begin{tikzpicture}[auto,>=latex',
				mynode/.style={rectangle,fill=white,anchor=center}, scale=1.12]
				\tikzstyle{block} = [draw, shape=rectangle, minimum height=2em, minimum width=2em, node distance=1cm, line width=0.5pt]
				\tikzstyle{circ} = [draw, shape=circle,  node distance=1cm, line width=0.5pt]
				\tikzstyle{sum} = [draw, shape=circle, node distance=1.5cm, line width=0.5pt, minimum width=1.25em]
				\tikzstyle{branch}=[fill,shape=circle,minimum size=0pt,inner sep=0pt]
				% Creating Blocks and Connection Nodes
				\node at (-2.2,1) (input) {$x$};  % Adjusted position of x
				%			\node[] at (2,1.8) {\bf \small Ramanujan analysis filter bank};
				\node at (5.5,1) {$\{(x * \tilde{c}_{q_1})(pk)\}_{k=0}^{M-1}$};
				\node at (5.5,0) {$\{(x * \tilde{c}_{q_2})(pk)\}_{k=0}^{M-1}$};
				\node at ($(5,0)!.5!(5,-1.8)$){\bf{\vdots}};
				\node at (5.5,-1.8) {$\{(x * \tilde{c}_{q_K})(pk)\}_{k=0}^{M-1}$};
				
				% Filters
				\node [block] at (0,1) (h1) {$c_{q_1}(n)$};
				\draw[->] (0.57,1) -- (1.6,1);  % Reduced spacing between rectangle and circle
				\node [block] at (0,0) (h2)  {$c_{q_2}(n)$};
				\draw[->] (0.57,0) -- (1.6,0);  % Reduced spacing between rectangle and circle
				\node [block] at (0,-1.8) (hs) {$c_{q_K}(n)$};
				\draw[->] (0.596,-1.8) -- (1.6,-1.8);  % Reduced spacing between rectangle and circle
				\node at ($(0,0)!.45!(0,-1.8)$){\bf{\vdots}};
				\node at ($(2.4,0.1)!.5!(2.4,-1.7)$){\bf{\vdots}};
				
				% Downsampling Circles
				\node [circ, minimum size=1.5em] at (2.4,1) (down1) {$\downarrow p$};
				\draw[-] (1.5,1) -- (down1);
				\draw[->] (down1) -- (4,1);
				
				\node [circ, minimum size=1.5em] at (2.4,0) (down2) {$\downarrow p$};
				\draw[-] (1.5,0) -- (down2);
				\draw[->] (down2) -- (4,0);
				
				\node [circ, minimum size=1.5em] at (2.4,-1.8) (downs) {$\downarrow p$};
				\draw[-] (1.5,-1.8) -- (downs);
				\draw[->] (downs) -- (4,-1.8); 
				
				% Input branches
				\path (input) -- coordinate(branch1) (h2);
				\begin{scope}[line width=0.5pt]
					\draw[->] (input) -- (-1.5,1); % Fixed arrow position to stop at vertical line
					\node at (-1.273, 0.5) {$\downarrow$};
					\node at (-1.273, -0.5) {$\downarrow$};
					\draw[-] (branch1) node[branch] {} |- (input);
					\draw[->] (branch1) node[branch] {} |- (h2);
					\draw[->] (branch1) node[branch] {} |- (h1);
					\draw[->] (branch1) node[branch] {} |- (hs);
				\end{scope}
			\end{tikzpicture}
		\end{center}
		\captionsetup{width=0.9\textwidth}
		\caption{\label{Filter bank}  Analysis phase of the $K$-channel  filter bank with decimation ratio $p$.}
	\end{figure}
	In light of frame theory and the equation \eqref{eq:outputintro},  it can be observed that the
	filter bank based on Ramanujan sums  $c_{q_1}, c_{q_2}, \ldots, c_{q_K}$ possesses the perfect reconstruction property provided that the
	collection
	\begin{equation}\label{eq:rpn}
		\mc R_{p,N}:=\{L_{pk}c_{q_1}\}_{k=0}^{M-1} \cup \{L_{pk}c_{q_2}\}_{k=0}^{M-1} \cup \hdots \cup \{L_{pk}c_{q_K}\}_{k=0}^{M-1},
	\end{equation}  is a \textit{frame} for $\ell^2(\mathbb Z_N)$, i.e., there exist constants $A,B > 0$  (called  \textit{frame bounds})  such that
	\begin{equation*}\label{eq: frameineq}
		A \|x\|^2 \leq \sum_{i=1}^K\sum_{k=0}^{M-1} |\langle x, L_{pk}c_{q_i} \rangle|^2 \leq B \|x\|^2,\,\, \text{ for all $x \in \ell^2(\mathbb Z_N).$}
	\end{equation*}
	If  $A = B,$ the frame is called a \textit{tight frame} with bound $A.$
	Filter banks with the same decimation ratio across each channel are called \textit{uniform filter banks.} Otherwise, they are called \textit{non-uniform filter banks.}   
	The collection $\mc R_{p,N}$ can also be seen as a particular case of a \textit{dynamical system }
	\begin{equation}\label{eq:dynamicalsystem}
		\{T^{pk}c_{q_i}: 0\leq k \leq M-1, \,\,1 \leq i\leq K\},
	\end{equation}
	where $T^mx(n)=L_mx(n)$ for $m,n \in \mathbb Z$ and $x\in \ell^2(\mathbb Z_N).$ Dynamical systems involve recovering signals from space-time samples, which corresponds to identifying conditions under which the system \eqref{eq:dynamicalsystem} forms a frame for \( \ell^2(\mathbb{Z}_N) \). For various instances of the dynamical sampling problem, see, \cite{aceska2013dynamical, aldroubi2013dynamical, aldroubi2017dynamical}.

	Our approach to proving  that the collection \( \mc R_{p,N} \) forms a frame for $\ell^2(\mathbb Z_N)$ relies on analyzing the analysis polyphase matrix whose entries are determined by the Zak transform of the Ramanujan sums. Specifically, the collection \( \mc R_{p,N} \) forms a frame for $\ell^2(\mathbb Z_N)$  if the analysis polyphase matrix satisfies a certain rank condition (see Lemma \ref{lem:mainlem}).  In the uniform filter bank setting, we prove in Theorem \ref{thm:tight} that $\mc R_{1,N}$ in general and $\mc R_{2,N}$ under some conditions  form  tight frames for $\ell^2(\mathbb Z_N).$  The same result also shows that $\mc R_{p,N},$ for $p>2,$ does not form a frame for $\ell^2(\mathbb Z_N).$ This situation can be handled in the non-uniform filter bank setting. Specifically, we prove in Theorem \ref{thm: rpkframe} that by updating the decimation ratios for appropriately selected input channels in the uniform filter bank, originally designed with a fixed decimation ratio \(p > 2\), the resulting non-uniform filter bank configuration exhibits frame properties. 
	The process of identifying the input channels that require decimation ratio updates is guided by an orthogonal decomposition of \( \ell^2(\mathbb{Z}_N) \), given by
	\[
	\ell^2(\mathbb{Z}_N) = S_{r,q_1} \oplus S_{r,q_2} \oplus \dots \oplus S_{r,q_K},
	\]  
	where for $1 \leq i \leq K$ and  appropriately chosen values of \( r \), \( S_{r,q_i} \) is a  \( \phi(q_i) \)-dimensional subspace of \( \ell^2(\mathbb{Z}_N) \), generated by \( N/r \)-consecutive \( r \)-shifts of the Ramanujan sum \( c_{q_i} \) (see Theorem \ref{thm:basis}). This decomposition leads to the construction of orthogonal frames based on Ramanujan sums. 
	For general Hilbert spaces, such orthogonal decompositions are widely studied in the literature and are fundamental in applications including signal processing, computer graphics, robotics, MRA, and super-wavelets, due to their structural and computational benefits \cite{han2000frames, balan2000multiplexing, balan1999density, gumber2018orthogonality, redhu2025construction,  shukla2018super, grochenig2009gabor}.
	
We further extend our investigation to the robustness of filter banks based on Ramanujan sums \( c_{q_1}, c_{q_2}, \ldots, c_{q_K} \) under channel erasures. The robustness of filter banks against channel erasures caused by noise or transmission failures is a crucial aspect of signal processing.  Prior work has established conditions under which filter banks retain their frame properties after the loss of one or more channels~\cite{kovacevic2002filter, xu2007robustness, casazza2003equal, goyal2001quantized, holmes2004optimal}. In contrast, we show that the filter bank based on Ramanujan sums fails to retain its frame properties even after the erasure of a single channel (see Theorem~\ref{thm:filtererasure}). This highlights the need to explore the resilience of these filter banks to the loss of frame coefficients during transmission.
	 
	  For effective reconstruction after erasures—where certain frame coefficients \(\{\langle x, L_{pk}c_{q_i} \rangle\}_{(k,i) \in \mathcal{I}}\) are lost, with \(\mathcal{I}\) denoting the index set of erasures—it is essential that the remaining collection \(\{L_{pk}c_{q_i}\}_{(k,i) \in \mathcal{I}^c}\) forms a frame, where \(\mathcal{I}^c\) denotes the complement of \(\mathcal{I}\). While uniform tight frames are robust to one erasure, general tight frames may fail under such loss~\cite{goyal2001quantized}. In Theorem~\ref{thm:erasure1}, we show that the tight frame \(\mathcal{R}_{p,N}\) is robust to one erasure, despite not being uniform. Furthermore, Theorem~\ref{thm:erasure2} provides explicit conditions under which \(\mathcal{R}_{p,N}\) is robust to two erasures.  We also examine the robustness of the Ramanujan systems from a subspace perspective by exploring their fusion frame structure. In particular, we show that the tight frame systems $\mathcal{R}_{p,N}$ admit interpretations as Parseval fusion frames (see Theorem \ref{thm:fusion}). This perspective provides a framework for analyzing robustness to erasures at the subspace level, which reinforces and extends the earlier robustness guarantees (see Theorem \ref{thm:fusionerasure}).

	In the further study, uniform filter banks based on Ramanujan sums exhibiting tight frames $\mc R_{p,N}$ are shown to have applications in signal recovery in  scenarios where some information from the signal is lost, corresponding to certain members of the tight frame  $\mc R_{p,N}$ involved in the frame expansion of \(x\), and where the signal is contaminated by noise, i.e., the noisy signal becomes \(y = x + \eta\), where \(\eta\) represents the noise. These scenarios are inspired by the seminal work of Donoho and Stark \cite{donoho1989uncertainty}, where the uncertainty principle plays a fundamental role in the exact recovery of a signal.  We show that an uncertainty principle concerning pairs of representations of signals  with respect to the canonical basis $\left\{ e_n \right\}_{n\in \mathbb Z_n}$  and tight frame $\mc R_{p,N}$  for $\ell^2(\mathbb Z_N)$ provides the necessary conditions for the signal recovery in both the scenarios (see, Theorems \ref{thm: uncertaintyrecovery}--\ref{thm:recovfromnoisy}).
	The uncertainty principle for pairs of bases has been extensively studied and has significant applications in sparse signal representations (see, e.g., 
	\cite{donoho2001uncertainty, elad2002generalized, ghobber2011uncertainty,  Kutyniok_2012,Grochenig2003}).
	
The signal recovery conditions provided in Theorems \ref{thm: uncertaintyrecovery}--\ref{thm:recovfromnoisy} may not always hold  due to significant missing data or when the signal is heavily contaminated by noise.  In these situations, 	the filter bank based on Ramanujan sums can help in reconstructing the original signal. The novelty of our work lies in incorporating the periodicity information of signals using filter banks based on Ramanujan sums to further optimize the recovery process. This allows for better reconstruction in cases of significant missing data or when the signal is heavily contaminated by noise.

In the case of missing
	information, the periodic components of the original signal utilizing the periodicity information of the original signal through a filter bank based on Ramanujan sums adds valuable constraints to the optimization problem. This modification results in reconstructions with improved signal-to-noise ratio (SNR) gains compared to those achieved without incorporating periodicity constraints.
For denoising,  we aim for recovering the signal \(x \in \ell^2(\mathbb{Z}_N)\) from the noisy observation \(y = x + \eta\), where \(\eta\) is the noise vector.  
Under modest assumptions on the noise, the Ramanujan sums $c_{q_i}$ will produce a high level of energy only when $q_i$ divides the period of the signal. By applying a  threshold to the energy values produced across all the channels, the significant periodic components of the original signal can be identified.
	This gives an optimized recovery algorithm that minimizes the error between the noisy and denoised signals, leading to improved SNR gain and better signal recovery.
	
%\textcolor{blue}{Ramanujan filter banks for denoising periodic signals have been employed in \cite{kulkarni2021, kulkarni2023periodicity}. The denoising approaches in these works and our method are fundamentally similar in that both utilize the output of the Ramanujan filter bank; however, they differ significantly in their reconstruction strategies and the structure of the respective filter banks. In the method proposed in \cite{kulkarni2021}, the noisy signal is first analyzed using the Ramanujan filter bank, and the resulting output is used to construct a synthesis dictionary that captures the dominant periodic components. A regularized optimization problem is then solved to obtain a coefficient vector, whose product with the synthesis dictionary yields the denoised approximation.
%In contrast, our method filters the noisy signal \( y = x + \eta \) using a filter bank with channels indexed by Ramanujan sums \( c_{q_i} \), \( i \in \mathcal{I}_K \), under the assumption that the true period divides \( N \). The output energies are used to identify a relevant subset of periods, which in turn guides a modification of the optimization framework. The resulting constrained problem yields the denoised signal.}

Ramanujan filter banks have been employed for denoising periodic signals in~\cite{kulkarni2021, kulkarni2023periodicity}. While the approaches in these works and ours share the common use of the Ramanujan filter bank outputs, they differ significantly in the reconstruction strategy and the  filter bank design. In~\cite{kulkarni2021}, a synthesis dictionary is constructed from the filter bank output, and a regularized optimization is solved to obtain the denoised signal. In contrast, our method filters the noisy signal \( y = x + \eta \) using channels indexed by Ramanujan sums \( c_{q_i} \), \( 1\leq i \leq K \), assuming the true period divides \( N \). The output energies are used to identify a relevant subset of periods, which in turn guides a modification of the optimization framework. The resulting constrained problem yields the denoised signal.

The remainder of the paper is organized as follows. Section~\ref{S2} analyzes the frame properties of Ramanujan filter banks and their applications to period identification, non-uniform designs, robustness to erasures and their fusion frame properties. Section~\ref{S3} develops an uncertainty principle  for signal recovery in \( \ell^2(\mathbb{Z}_N) \), and Section~\ref{S4} presents supporting numerical experiments.
	
%	The remainder of the paper is structured as follows. Section~\ref{S2} analyzes the frame properties of filter banks based on Ramanujan sums \( c_{q_1}, \ldots, c_{q_K} \in \ell^2(\mathbb{Z}_N) \), where each \( q_i \mid N \). Subsection~\ref{sub:2period} illustrates their application to period identification. Subsection~\ref{sub:2tight} examines frame properties of the collection \( \mathcal{R}_{p,N} \), while Subsection~\ref{sub:3nonuniform} addresses non-uniform filter banks. Subsection~\ref{sub:erasure} discusses robustness to channel erasures and frame erasures. Subsection~\ref{sub:fusion} explores fusion frame properties of Ramanujan subspaces.
%	Section~\ref{S3} focuses on signal recovery in \( \ell^2(\mathbb{Z}_N) \) under uncertainty constraints. Subsection~\ref{sub: uncer} formulates the uncertainty principle, while Subsections~\ref{sub:missing} and~\ref{sub:nois} prove our signal recovery results.
%Finally, Section~\ref{S4} presents numerical demonstrations of our signal recovery results. 

%Subsections~\ref{sub: missingdata} and~\ref{sub:noisy} illustrate signal reconstruction under truncation and noise, respectively, evaluated using the SNR gain metric.
	%	  
	\section{Ramanujan Sums in Signal processing} \label{S2}
	
	Ramanujan sums have found increasing relevance in the field of signal processing due to their various properties, particularly,  to capture periodicities in discrete-time signals.  A significant application of Ramanujan sums is in the construction of \textit{Ramanujan filter banks}. These filter banks use Ramanujan sums as analysis filters, providing a mechanism for detecting and tracking periodic patterns in time series data \cite{ planat2002ramanujan, planat2009ramanujan}. Recall
	the $q$-th Ramanujan sum $c_q,$ defined in \eqref{eq:Ramanujansum}, i.e.,   the  sum of the  $n$th powers of all the $q$-th primitive roots of unity:
	\begin{equation*}\label{eq:2Ramanujansum}
		c_q(n)=\sum_{\substack{k=1\\(k,q)= 1}}^q e^{2\pi i k n /q}, \,\,\, n \geq 1.
	\end{equation*}
	For example, if $q=5,$ then $(k,q)=1$ for $k \in \{1,2,3,4\},$ thus 
	$$c_{5}(n)=e^{2\pi i n /5} + e^{2\pi i 2n /5} + e^{2\pi i 3n /5} + e^{2\pi i 4 n /5},\,\,\, n\geq 1.$$
	Note that
	$c_q$ is $q$-periodic, i.e., 	$c_q(n+q) = c_q(n), \,n \geq 1$ and  
	$c_q(0) = \phi(q),$ where the  \textit{Euler's totient function} $\phi(q)$ counts the positive integers that are coprime to $q.$
	For more properties of Ramanujan sums, we refer to \cite{ramanujan1918certain,vaidyanathan2014ramanujan1,vaidyanathan2014ramanujan2}.
	The first few Ramanujan sums are listed below:

		\begin{minipage}{0.6\textwidth}
	\begin{equation*}
			\begin{array}{ll}
				c_1 &= \{1, 1, \hdots\}; \\
				c_2 &= \{1, -1, 1, -1, \hdots\}; \\
				c_3 &= \{2, -1, -1, 2, \hdots\}; \\
				c_4 &= \{2, 0, -2, 0, 2, \hdots\}; \\
				c_5 &= \{4, -1, -1, -1, -1, 4, \hdots\}; \\
				c_6 &= \{2, 1, -1, -2, -1, 1, 2, \hdots\}.
			\end{array}
		\end{equation*}
		\end{minipage}%
		\hspace{0.5cm}
		\begin{minipage}{0.301\textwidth}
			\includegraphics[width=0.69\textwidth]{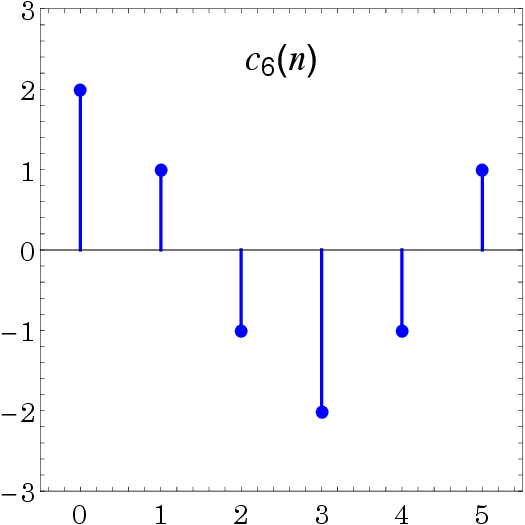}
		\end{minipage}
		
		\noindent The \( \phi(q) \) successively shifted $q$-th Ramanujan sum $c_q,$
		i.e., $
		\{c_q, L_1c_q, \ldots, L_{\phi(q)-1}c_q\},$
		are linearly independent and thus generate a $\phi(q)$-dimensional space, known as the  \textit{Ramanujan subspace}, denoted by  $S_q,$  where for $m \in \mathbb Z,$ the operator $L_m$ is the \textit{circular shift/translation operator}   on $\ell^2(\mathbb Z_N).$    These subspaces enable the development of Ramanujan periodicity dictionaries for representing and extracting periodic components of signals. 
		For example, if \( q =5 \) then \( \phi(5)=4 \), and the four sequences which form the basis for \( S_{5} \) are
	\begin{align*}
			c_{5}&=  \{4, -1, -1, -1, -1, 4, \hdots\},\,\,
			c_{5}(\cdot- 1) = \{-1,4,-1,-1,-1,-1,4,\hdots\}, \\
			c_{5}(\cdot - 2) &= \{-1,-1,4,-1,-1,-1,\hdots\},\,\,
			c_{5}(\cdot- 3) = \{-1,-1,-1,4,-1,-1,-1,\hdots\}.
		\end{align*}
		The  Ramanujan subspaces can be associated with the  $q \times q$  circulant matrix shown below:
		\begin{equation*}
			\mc {C}_q: = 
			\left[
			\begin{array}{cccccc}
				c_q(0) & c_q(q-1) & c_q(q-2) & \cdots & c_q(1) \\
				c_q(1) & c_q(0) & c_q(q-1) & \cdots & c_q(2) \\
				c_q(2) & c_q(1) & c_q(0) & \cdots & c_q(3) \\
				\vdots & \vdots & \vdots & \ddots & \vdots \\
				c_q(q-2) & c_q(q-3) & c_q(q-4) & \cdots & c_q(q-1) \\
				c_q(q-1) & c_q(q-2) & c_q(q-3) & \cdots & c_q(0)
			\end{array}
			\right].
		\end{equation*}
		In fact,  $\mc {C}_q$ has rank $\phi(q)$ and the first $\phi(q),$ columns (or any $\phi(q)$ consecutive columns) are linearly independent and form a basis for $S_q.$ Thus, the column space of $\mc {C}_q$ is the Ramanujan subspace $S_q$.
		
	We summarize below some essential properties of the Ramanujan sums that will be used in the sequel.
\begin{prop} \cite{vaidyanathan2014ramanujan1}\label{prop:properties}
			(Some properties of the Ramanujan sums). For any positive integer \( q \), the Ramanujan sum \( c_q \), as defined in~\eqref{eq:Ramanujansum}, satisfies the following properties:
			\begin{itemize}
				\item [$(i)$]  \( c_q(n) \) is an integer and \( c_q(n) \leq \phi(q) \) for any  \( n \).
				\item [$(ii)$]\( c_q(n) = c_q(-n) \) for any  \( n \).
				\item [$(iii)$] (Autocorrelation) 
				$
				\sum_{n=0}^{q-1} c_q(n)c_q(n-l) = q c_q(l) \quad \text{for any } l \in \mathbb{Z}.
				$
				\item[$(iv)$] (Orthogonality) 
				$
				\sum_{n=0}^{\operatorname{lcm}(p,q)-1} c_{q'}(n)c_q(n-l) = 0 \quad \text{for any } l \in \mathbb{Z} \text{ when } q' \neq q.
				$
					\item[$(v)$] (Sum) $\sum_{n=0}^{q-1} c_q(n) =0$ for $q>1$.
				\item[$(vi)$] (Sum of squares) $\sum_{n=0}^{q-1} c_q^2(n) =q\phi(q).$
			\end{itemize}
		\end{prop}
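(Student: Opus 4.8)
The plan is to treat the six items as a tight cluster resting on two foundational facts about $c_q$—that it is real and that it is an integer—after which (iii)--(vi) all reduce to character-orthogonality computations. I would open with (ii), since it simultaneously yields reality. Reindexing the defining sum \eqref{eq:Ramanujansum} by the involution $k \mapsto q-k$, which is a bijection of the residues coprime to $q$ because $\gcd(q-k,q)=\gcd(k,q)$, sends $e^{-2\pi i kn/q}$ to $e^{2\pi i kn/q}$, so $c_q(-n)=c_q(n)$. In particular $\overline{c_q(n)} = c_q(-n) = c_q(n)$, so $c_q(n)\in\mathbb{R}$. For (i), reality gives $c_q(n) \le |c_q(n)| \le \phi(q)$ by the triangle inequality, being a sum of $\phi(q)$ terms of unit modulus. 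Integrality I would obtain by a Galois argument: $c_q(n)$ is an algebraic integer (a sum of roots of unity) fixed by $\mathrm{Gal}(\mathbb{Q}(\zeta_q)/\mathbb{Q})\cong(\mathbb{Z}/q\mathbb{Z})^\ast$, which merely permutes the primitive $q$-th roots of unity; hence $c_q(n)\in\mathbb{Q}$, and a rational algebraic integer is a rational integer. (Equivalently one may invoke the divisor-sum formula $c_q(n)=\sum_{d\mid\gcd(n,q)} d\,\mu(q/d)$.)

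For (iii) I would expand both factors, interchange the finite sums, and use $\sum_{n=0}^{q-1} e^{2\pi i (k+j)n/q} = q$ when $q\mid k+j$ and $0$ otherwise. The surviving terms are exactly those with $j=q-k$, and since $e^{-2\pi i (q-k)l/q}=e^{2\pi i k l/q}$, reindexing collapses the double sum to $q\,c_q(l)$. Properties (v) and (vi) then come for free: (vi) is (iii) specialized to $l=0$, using $c_q(0)=\phi(q)$ as recorded in the text, giving $\sum_n c_q^2(n)=q\,\phi(q)$; and (v) is the special case $q'=1$ of (iv) (with $c_1\equiv 1$ and $\operatorname{lcm}(1,q)=q$), or a one-line direct computation since the inner character sum vanishes for $q>1$.

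The main obstacle is (iv), the orthogonality across distinct $q' \neq q$ over the common period $L:=\operatorname{lcm}(q',q)$ (I read the $p$ in the statement as $q'$). After expanding and swapping sums, the inner sum is the geometric series $\sum_{n=0}^{L-1} e^{2\pi i \alpha n}$ with $\alpha=\tfrac{k}{q'}+\tfrac{j}{q}=\tfrac{kq+jq'}{q'q}$. The key observation is that, writing $g=\gcd(q',q)$, we have $g\mid kq+jq'$ and $L\alpha=(kq+jq')/g\in\mathbb{Z}$, so $e^{2\pi i L\alpha}=1$ always; hence the geometric sum is $0$ unless $\alpha\in\mathbb{Z}$. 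It therefore remains to rule out $\alpha\in\mathbb{Z}$, i.e. $q'q\mid kq+jq'$: reducing modulo $q'$ gives $q'\mid kq$, and $(k,q')=1$ forces $q'\mid q$; symmetrically $q\mid q'$, whence $q=q'$, contradicting $q'\neq q$. Thus every inner sum vanishes and the whole double sum is $0$. The delicate points to get right are that the vanishing must be argued over the $L$-length window rather than a single period, and that the coprimality hypotheses are precisely what is needed to pass from $\alpha\in\mathbb{Z}$ to $q=q'$.
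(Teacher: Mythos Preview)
Your proof is correct in every part. The paper itself does not prove Proposition~\ref{prop:properties}; it is stated with attribution to \cite{vaidyanathan2014ramanujan1} and no argument is given, so there is nothing to compare against. Your approach is the standard one and the only delicate item, (iv), is handled cleanly: the observation that $L\alpha\in\mathbb{Z}$ (so the geometric sum genuinely collapses to $0$ or $L$) together with the Euclid/Gauss step ``$q'\mid kq$ and $(k,q')=1\Rightarrow q'\mid q$'' is exactly what is needed. Your reading of the typo $p\mapsto q'$ in the statement is also correct.
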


		In this section, we use the  properties of Ramanujan sums and their associated subspaces to  examine the  reconstruction capabilities of the Ramanujan filter bank and its robustness against erasures.
		In the following subsection, we demonstrate how these filter banks are used to effectively identify  the period of an unknown signal.

	\subsection{Period identification using Ramanujan sums}\label{sub:2period}
		The use of filter banks based on Ramanujan sums for identifying the period of a signal was developed by Vaidyanathan and Tenneti in \cite{vaidyanathan2015properties}. Their work provides a foundational framework for this task. Specifically, Theorem~1 in \cite{vaidyanathan2015properties} identifies which filters in the bank will respond to a periodic input signal.

		\begin{thm}[{\cite[Theorem 1]{vaidyanathan2015properties}}]\label{thm:vaidyanathanthm1}
			Consider a filter bank based on Ramanujan sums $c_1, c_2, \hdots, c_M$ and let $x$ be a period-$P$ input signal with $1 \leq P \leq M$. Then nonzero outputs can only be produced by those filters $c_q,$ $1 \leq q \leq M$, such that the filter index $q$ is a divisor of $P$, that is, $q \mid P$.
		\end{thm}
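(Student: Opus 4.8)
The plan is to reduce the filter output to a correlation against $c_q$ and then annihilate it using the mutual orthogonality of the Ramanujan sums. First I would simplify the filter itself: by parts $(i)$ and $(ii)$ of Proposition~\ref{prop:properties}, $c_q$ is real (indeed integer-valued) and even, so the involution satisfies $\tilde c_q(n)=\overline{c_q(-n)}=c_q(n)$. Consequently the $q$-th channel output in~\eqref{eq:outputintro} is the genuine correlation
\[
(x*\tilde c_q)(n)=\sum_m x(m)\,c_q(n-m),
\]
and it suffices to prove that this sequence vanishes identically whenever $q\nmid P$. The contrapositive—that a nonzero output forces $q\mid P$—is then exactly the assertion.

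Second, I would appeal to the Ramanujan decomposition of periodic signals: any period-$P$ signal splits as $x=\sum_{d\mid P} x_d$ with $x_d\in S_d$, the Ramanujan subspace generated by the circular shifts of $c_d$. This rests on the dimension count $\sum_{d\mid P}\phi(d)=P$ together with the mutual orthogonality of the subspaces $\{S_d\}_{d\mid P}$, which is precisely what property~$(iv)$ furnishes. By linearity of the convolution the output then decomposes as $\sum_{d\mid P}(x_d*\tilde c_q)(n)$, so the problem is reduced to a single Ramanujan subspace at a time.

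Third, I would show $(x_d*\tilde c_q)\equiv 0$ for every $d\neq q$. Writing $x_d=\sum_l \alpha_l L_l c_d$ and pushing each shift through the convolution gives $(L_l c_d*\tilde c_q)(n)=(c_d*\tilde c_q)(n-l)$, so every term is a shifted cross-correlation
\[
(c_d*\tilde c_q)(m)=\sum_k c_d(k)\,c_q(m-k)=\sum_k c_d(k)\,c_q(k-m),
\]
where I used the evenness of $c_q$. This matches the form in property~$(iv)$ with the two distinct indices $d\neq q$, so the cross-correlation vanishes. Since $q\nmid P$ forces $q\neq d$ for every divisor $d$ of $P$, all terms in the decomposition die and the output is identically zero.

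The main obstacle I anticipate is bookkeeping with the summation ranges: property~$(iv)$ guarantees cancellation over a single $\operatorname{lcm}(d,q)$ window, so I must evaluate the correlation over a full common period (a common multiple of $\operatorname{lcm}(d,q)$ and $P$) for the sums to telescope to zero, and I must dispose separately of the trivial divisor $d=1$ and of any edge effects if the filters are FIR-truncated rather than genuinely circular. An alternative, perhaps slicker, route is spectral: the DFT of $c_q$ is supported exactly on the frequencies $k/q$ with $\gcd(k,q)=1$, while a period-$P$ signal has spectrum supported on frequencies $a/d$ in lowest terms with $d\mid P$; these two supports are disjoint unless $q\mid P$, whence the product of spectra—and thus the output—vanishes. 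I would keep the correlation argument as the primary proof, since it invokes Proposition~\ref{prop:properties} directly, and retain the spectral picture as corroboration.
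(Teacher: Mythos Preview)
The paper does not supply a proof of this statement: Theorem~\ref{thm:vaidyanathanthm1} is quoted verbatim from \cite[Theorem~1]{vaidyanathan2015properties} and used as a black box (for instance, in the proof of Corollary~\ref{thm: period}). There is therefore nothing in the present paper to compare your argument against.

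That said, your plan is sound and is essentially the standard argument. The reduction $\tilde c_q=c_q$ via Proposition~\ref{prop:properties}(i)--(ii) is correct, the Ramanujan decomposition $x=\sum_{d\mid P}x_d$ is exactly the content of~\eqref{eq:rptrep} (specialised to length $P$), and the vanishing of each cross term $(c_d*\tilde c_q)$ for $d\neq q$ is precisely Proposition~\ref{prop:properties}(iv). Your worry about summation windows is the only point requiring care: the filter output at time $n$ is an inner product over one full period of the input, which is a multiple of $P$ and hence of every $d\mid P$; since $\operatorname{lcm}(d,q)$ need not divide $P$ when $q\nmid P$, you should instead sum over a common multiple of $P$ and $q$ (or simply over $Pq$ copies and divide), after which property~(iv) applies block by block. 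The spectral alternative you sketch is exactly Proposition~\ref{prop: cqprop}(i) in this paper, and is arguably cleaner: the DFT support of $c_q$ meets the DFT support of a period-$P$ signal only at primitive $q$-th roots of unity that are also $P$-th roots of unity, which forces $q\mid P$.
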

		
		Furthermore, Theorem~3 of \cite{vaidyanathan2015properties} establishes how the period can be precisely recovered from the set of responding filters.
		
		\begin{thm}[{\cite[Theorem 3]{vaidyanathan2015properties}}]\label{thm:vaidyanathanthm3}
		Consider a filter bank based on Ramanujan sums $c_1, c_2, \hdots, c_M$ and let $x$ be a period-$P$ input signal with $1 \leq P \leq M$. Let nonzero outputs be produced by the subset of filters $c_{q_i}$ with periods $q_1, q_2, \ldots, q_K$. Then the period $P$ is given by
			\[
			P = \mathrm{lcm} \{q_1, q_2, \ldots, q_K\}.
			\]
		\end{thm}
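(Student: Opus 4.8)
The plan is to prove the identity by establishing the two divisibility relations $L \mid P$ and $P \mid L$, where $L := \mathrm{lcm}\{q_1,\dots,q_K\}$; since both are positive integers, these force $P = L$. The relation $L \mid P$ is immediate from Theorem~\ref{thm:vaidyanathanthm1}: every responding index $q_i$ divides $P$, hence so does their least common multiple. The whole content of the theorem thus lies in the reverse relation $P \mid L$, which is where I would concentrate the effort.

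For $P \mid L$ the structural input I would use is the orthogonal decomposition of the space of $P$-periodic signals into Ramanujan subspaces,
\[
\ell^2(\mathbb Z_P) = \bigoplus_{q \mid P} S_q .
\]
I would justify this in two steps: distinct subspaces $S_q$ and $S_{q'}$ are orthogonal by the orthogonality relation of Proposition~\ref{prop:properties}$(iv)$, and because $\dim S_q = \phi(q)$ together with Gauss's identity $\sum_{q \mid P}\phi(q) = P = \dim \ell^2(\mathbb Z_P)$, the orthogonal family $\{S_q\}_{q \mid P}$ must span the entire space. Hence the input admits a unique expansion $x = \sum_{q \mid P} x_q$ with $x_q \in S_q$.

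The crucial step is then to match the responding filters with the nonzero components $x_q$. Since $c_q$ is integer-valued and even (Proposition~\ref{prop:properties}$(i)$--$(ii)$), its involution satisfies $\tilde c_q = c_q$; the autocorrelation identity $(iii)$ gives $c_q * c_q = q\, c_q$, while the cross-correlation identity $(iv)$ gives $c_{q'} * c_q = 0$ for $q' \neq q$. Because every $x_q$ is a linear combination of shifts of $c_q$, filtering $x$ by $\tilde c_q$ annihilates all components except $x_q$ and rescales that one, so that $x * \tilde c_q = q\, x_q$. Consequently the $q$-th channel produces a nonzero output exactly when $x_q \neq 0$; the responding set therefore equals $\{q \mid P : x_q \neq 0\}$, and $x = \sum_{i=1}^{K} x_{q_i}$.

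It remains to close the argument by a periodicity count. Each $x_{q_i}$ lies in $S_{q_i}$ and is thus $q_i$-periodic; as $q_i \mid L$, each $x_{q_i}$ is $L$-periodic, and hence so is their sum $x$. Since $P$ is the fundamental period of $x$ and the fundamental period of a sequence divides every one of its periods, we obtain $P \mid L$, and combining with $L \mid P$ yields $P = L$. The main obstacle is the third step — converting the autocorrelation and orthogonality relations into the clean statement $x * \tilde c_q = q\, x_q$, i.e.\ recognizing convolution by $c_q$ as a scalar multiple of the orthogonal projection onto $S_q$ — because it is precisely this identity that guarantees no nonzero Ramanujan component of $x$ can go undetected by the filter bank; once it is in hand, everything downstream is elementary arithmetic of periods.
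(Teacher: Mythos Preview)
The paper does not give its own proof of this statement: Theorem~\ref{thm:vaidyanathanthm3} is quoted verbatim from \cite{vaidyanathan2015properties} and used only as background for Corollary~\ref{thm: period}, so there is nothing in the paper to compare your argument against.

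Assessed on its own merits, your proof is correct. The two ingredients you identify --- the orthogonal decomposition $\ell^2(\mathbb Z_P)=\bigoplus_{q\mid P}S_q$ (which the paper later records, for the ambient space $\ell^2(\mathbb Z_N)$, just before Theorem~\ref{thm:basis}) and the fact that filtering by $c_q$ acts as a nonzero scalar times the orthogonal projection onto $S_q$ --- do exactly what is needed. One cosmetic point: the scalar in $x*\tilde c_q$ depends on the convolution convention (over one period of length $q$ the autocorrelation in Proposition~\ref{prop:properties}(iii) gives the factor $q$; over a longer ambient period it picks up an extra integer factor), but since all you need is that the scalar is nonzero, this does not affect the argument. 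Your final step --- each $x_{q_i}\in S_{q_i}$ is $q_i$-periodic, hence $L$-periodic, hence $x$ is $L$-periodic, hence $P\mid L$ --- is exactly the right way to close.
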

		
		These results show that a filter bank based on Ramanujan sums isolates only those components corresponding to divisors of the period, and the original period can be recovered as the least common multiple of the indices of the non-zero output filters. However, when the period $P$ is large, considering all Ramanujan sums $c_q$ for $1 \leq q \leq M$ with $1\leq P \leq M$ may increase computational complexity and reduce the efficiency of the period identification process. In the special case where $x \in \ell^2(\mathbb Z_N)$ and  $P$ is a known divisor of $N$, the period identification process can be made significantly simpler by considering the filter bank based on Ramanujan sums $c_{q_1},c_{q_2}, \ldots, c_{q_K}$ corresponding to all the divisors $q_1,q_2, \ldots, q_K$ of $N.$

As a consequence of Theorem~2.2 and Theorem~2.3, we derive the following result, which serves as a corollary to these theorems. Assuming that the period \( P \) divides \( N \), the result shows that a reduced filter bank comprising only the Ramanujan sums corresponding to the divisors of \( N \) suffices to identify the period.
	\begin{cor} \label{thm: period} 
				Let \( x \in \ell^2(\mathbb{Z}_N) \) be a period-\( P \) input signal such that \( P \) divides $N$, and let \( q_1, q_2, \ldots, q_K \) be the divisors of \( N \). Then the nonzero outputs \( y_{q_i} \) (as defined in~\eqref{eq:outputintro}),  \( 1 \leq i \leq K \), must be generated by a subset of the Ramanujan sums \( c_{q_1}, c_{q_2}, \ldots, c_{q_m} \) with \( m \leq K \). Moreover, the period \( P \) is given by
				\(
				P = \mathrm{lcm}(q_1, q_2, \ldots, q_m).
				\)
		\end{cor}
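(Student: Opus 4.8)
The plan is to derive the statement directly from Theorem~\ref{thm:vaidyanathanthm1} and Theorem~\ref{thm:vaidyanathanthm3} by comparing the reduced filter bank $c_{q_1}, \ldots, c_{q_K}$ (indexed by the divisors of $N$) with the full bank $c_1, c_2, \ldots, c_N$. Since $P \mid N$ we have $1 \le P \le N$, so with $M = N$ the hypotheses of both cited theorems are satisfied for the full bank. The arithmetic fact that drives the argument is transitivity of divisibility: if $d \mid P$ and $P \mid N$ then $d \mid N$, whence
\[
\{\, d \in \mathbb{N} : d \mid P \,\} \subseteq \{\, d \in \mathbb{N} : d \mid N \,\} = \{q_1, q_2, \ldots, q_K\}.
\]
That is, every divisor of $P$ already occurs among the filter indices of the reduced bank.

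Next I would invoke Theorem~\ref{thm:vaidyanathanthm1} for the full bank: the only filters $c_q$, $1 \le q \le N$, that can produce a nonzero output on the period-$P$ signal $x$ are those with $q \mid P$. By the inclusion above each such index lies in $\{q_1, \ldots, q_K\}$, so the responding filters form a subset of the reduced bank; relabeling, call them $c_{q_1}, \ldots, c_{q_m}$ with $m \le K$. The key observation --- and where $P \mid N$ does the real work --- is that passing from the full bank to the reduced bank discards no responding channel, since every filter whose output can be nonzero (index dividing $P$) is already present. Hence the responder set computed with the reduced bank coincides with that of the full bank, and applying Theorem~\ref{thm:vaidyanathanthm3} to this common set gives $P = \mathrm{lcm}(q_1, \ldots, q_m)$.

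The one genuine obstacle is the completeness claim just used: one must ensure that restricting attention to divisors of $N$ cannot remove a filter that would otherwise respond and thereby shrink the least common multiple below $P$. This is precisely what $P \mid N$ guarantees --- without it, some divisor of $P$ might fail to divide $N$, its filter would be absent from the reduced bank, and the $\mathrm{lcm}$ of the surviving responders could be a proper divisor of $P$. Everything else is a faithful transcription of the two cited theorems, so no substantial new computation is needed.
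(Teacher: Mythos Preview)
Your proposal is correct and follows essentially the same approach as the paper: both arguments invoke Theorem~\ref{thm:vaidyanathanthm1} to conclude that only filters $c_q$ with $q \mid P$ can respond, use the divisibility chain $q \mid P \mid N$ to see that all such indices already lie in $\{q_1,\ldots,q_K\}$, and then appeal to Theorem~\ref{thm:vaidyanathanthm3} for the $\mathrm{lcm}$ identity. Your explicit framing via the full bank $c_1,\ldots,c_N$ and your remark on why no responding filter is discarded are a bit more careful than the paper's version, but the substance is identical.
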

		\begin{proof}
		The output \( y_{q_i}(n) \) corresponding to the \( q_i \)-th Ramanujan sum \( c_{q_i} \), for \( 1 \leq i \leq K \) and \( 0 \leq n \leq N-1 \), can be expressed as
		\begin{equation*}
			y_{q_i}(n) = (x * \tilde{c}_{q_i})(n) = \sum_{m=0}^{N-1} x(m) \tilde{c}_{q_i}(n - m) 
			= \sum_{m=0}^{N-1} x(m) \overline{L_n c_{q_i}(m)} = \langle x, L_n c_{q_i} \rangle.
		\end{equation*}
		It follows from \cite[Theorem 1]{vaidyanathan2015properties} that the output $y_{q_i}, 1 \leq i \leq K,$  can be non-zero only if $q_i \mid P.$ Since $P$ divides $N,$ then clearly $\{p_1, p_2, \hdots, p_m\} \subset \{q_1, q_2, \hdots, q_K\},$ where $p_1, p_2, \hdots, p_m$ are all the divisors of $P.$ 
			Therefore,  the filter bank based on  $c_{q_1}, c_{q_2}, \hdots, c_{q_K}$  contains all those Ramanujan sums, i.e.,  $c_{p_1}, c_{p_2}, \hdots, c_{p_m}$ for which $y_{p_i}, 1\leq i \leq m,$ can be non-zero.  Then,  $P$ is known precisely and is given by \( \text{lcm}(p_1, p_2, \dots, p_m).\)
		\end{proof}

		We provide an example to illustrate  Corollary \ref{thm: period}.
		\begin{exmp}\label{exmp1}
		Consider a signal \( x \in \ell^2(\mathbb{Z}_{30}) \), depicted in Figure~\ref{fig:period_signal}(a), with period $P\leq N=30$. 
		The divisors of \( N = 30 \) are given by \( q_1 = 1, q_2 = 2, q_3 = 3, q_4 = 5, q_5 = 6, q_6 = 10, q_7 = 15, q_8 = 30 \), so that \( K = 8 \).
		We construct the filter bank based on the Ramanujan sums \( c_1, c_2, c_3, c_5, c_6, c_{10}, c_{15}, \) and \( c_{30} \), in accordance with Corollary \ref{thm: period}.
		The output from the \( q_i \)-th Ramanujan filter is given by
		\[
		(x * \tilde{c}_{q_i})(k) = \langle x, L_{k} c_{q_i} \rangle, \quad 0 \leq k \leq 29,\; 1 \leq i \leq 8.
		\]
			%	
			%		where the first equality holds since  for $q_i \mid N,$ we have  $\tilde{c}_q(n)=c_q(-n)=c_q(N-n)=c_q(kq-n)=c_{q}(n).$
			Figure~\ref{fig:period_signal}(b) illustrates the energy outputs produced by each channel after the signal $x$ is passed through it, where
			the energy $E_i$ corresponding to  the $q_i$-th Ramanujan is given by:
			%	The \textit{filter index vs energy plot} for this case is shown  in Figure~ \ref{fig:period_signal}(c),  where the energy $E_i$ associated with Ramanujan sum $c_{q_i}$ after a signal $x$ is passed through it is given by:
			\begin{equation*}
				E_i=\sum_{n=0}^{N-1}|x\, \asterisk\, \tilde{c}_{q_i}(n)|^2, \,\, 1 \leq i \leq K.
			\end{equation*} 
			From Figure~ \ref{fig:period_signal}(b), it is clear that the outputs corresponding to the Ramanujan sums $c_3,c_5$ and $c_{15}$ are non-zero. therefore by using Corollary \ref{thm: period},  we get $P=\text{lcm}(3,5,15)=15.$
			\begin{figure}[!h]
				\begin{center}
					\centering
					% Include your graphics or plot here
					\includegraphics[width=0.313\textwidth]{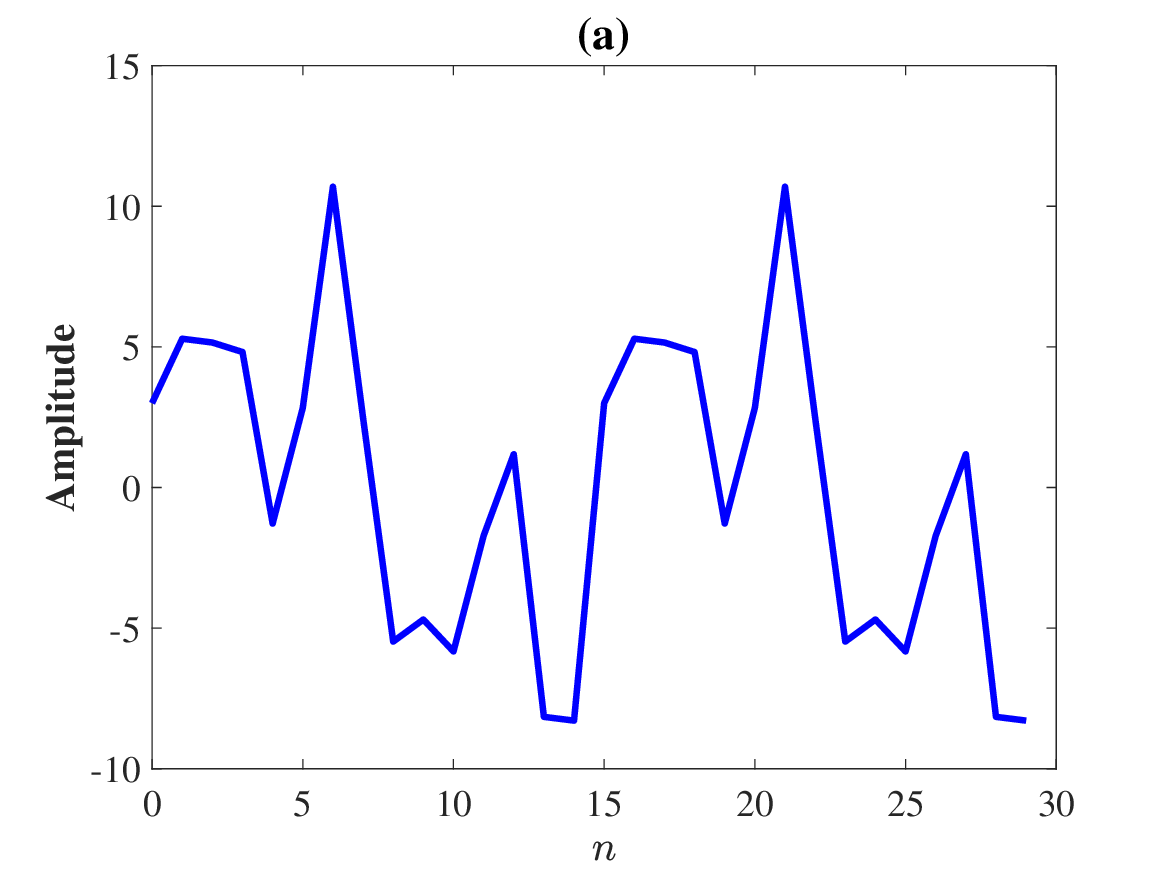}
					\quad
					\includegraphics[width=0.3133\textwidth]{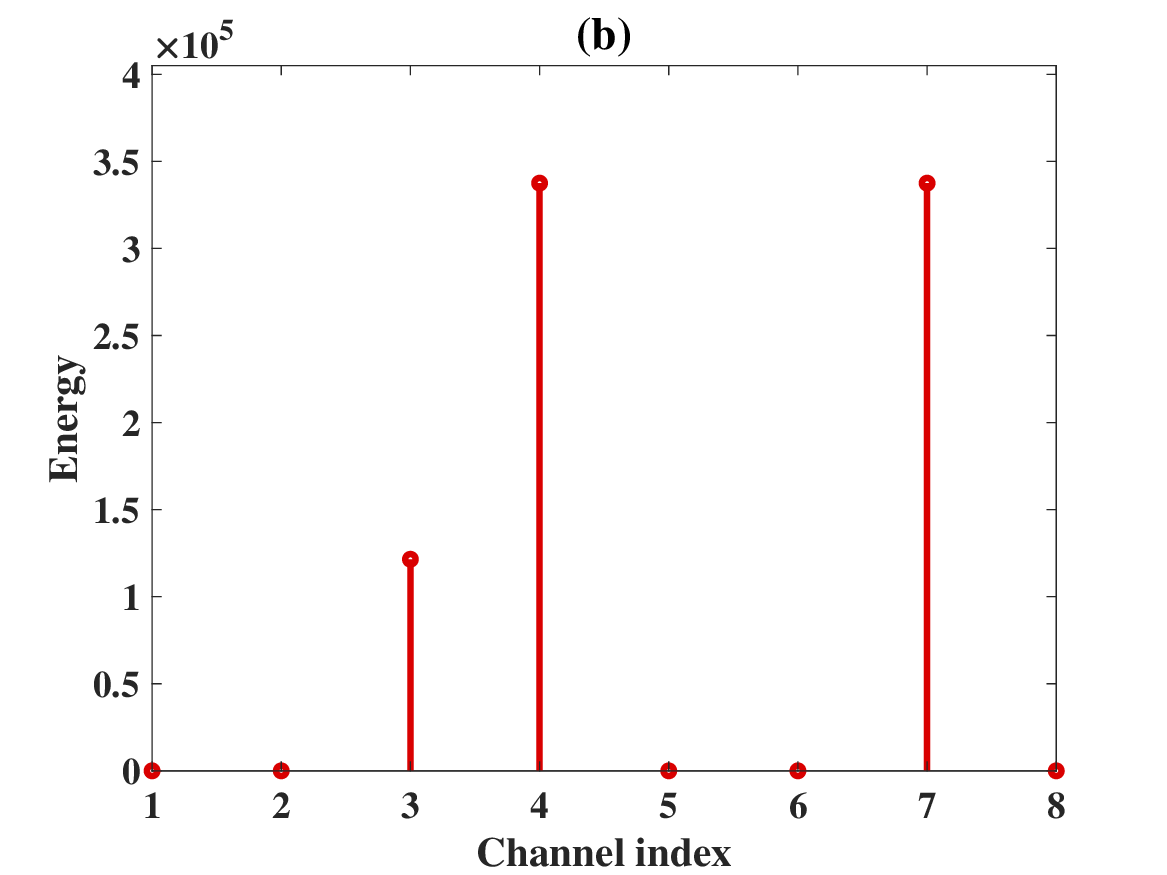}
					\quad
					\includegraphics[width=0.3133\textwidth]{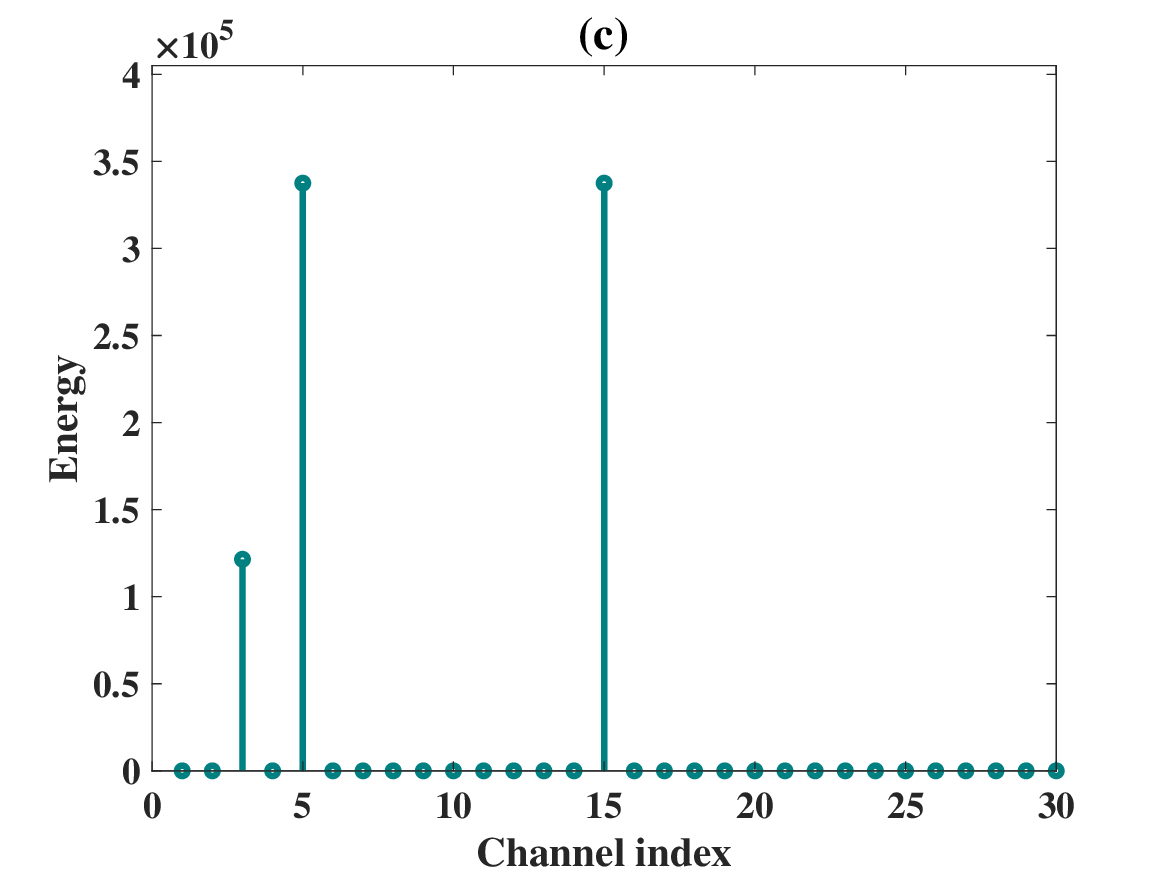}
					\captionsetup{width=0.88\textwidth}
					\caption{
						(a) Signal of length $N=30$ with periodic components 3, 5, and 15, (b) energy outputs of the signal $x$ using Ramanujan sums $c_1,c_2,c_3,c_5,c_6,c_{10},c_{15}$ and $c_{30},$ (according to Corollary \ref{thm: period}) and (c) energy outputs of the signal $x$ using Ramanujan sums $c_1$ to $c_{30}$ (according to Theorem \ref{thm:vaidyanathanthm1}).}
					\label{fig:period_signal}
				\end{center}
			\end{figure}

		On the other hand, identifying \( P \), as described in Theorem~\ref{thm:vaidyanathanthm3}, requires a filter bank based on Ramanujan sums \( c_1, c_2, c_3, \dots, c_{30} \), whose energy outputs are shown in Figure~\ref{fig:period_signal}(c). 
		However, Corollary~\ref{thm: period} shows that the filter bank constructed using only the Ramanujan sums \( c_1, c_2, c_3, c_5, c_6, c_{10}, c_{15}, \) and \( c_{30} \) is sufficient to identify \( P \).
		\end{exmp}
		
			Theorem \ref{thm: period} provides a practical framework for determining the period \( P \) of an input signal using a filter bank composed of Ramanujan sums corresponding to the divisors of \( N \). This highlights the significance of such  filter banks  in signal processing. Consequently,  studying the frame properties  of the collection \(\mathcal{R}_{p,N}\) (defined in \eqref{eq:rpn}), generated  from such  filter banks with a decimation ratio \( p \), becomes essential as it ensures the perfect reconstruction capability of these filter banks.
			
			In the next subsection, we demonstrate that, for  certain  choices of $N$ and the decimation ratio $p,$ the collection $\mc R_{p,N}$ forms a tight frame for $\ell^2(\mathbb Z_N).$

			\subsection{Tight frames involving Ramanujan sums and erasures}\label{sub:2tight}						
			
			Tight frames are useful in applications such as compressed sensing, denoising, and data compression, providing a versatile set of tools for engineers and researchers. See, \cite{donoho2006compressed, shen2006image, cai2014data, benedetto2003finite, christensen2016frames}.
			In this subsection, we prove that \(\mathcal{R}_{1,N}\), in general, and \(\mathcal{R}_{2,N}\), under certain conditions, form a tight frame for \(\ell^2(\mathbb{Z}_N)\) (Theorem \ref{thm:tight}). The same result also establishes that \(\mathcal{R}_{p,N}\), for \(p > 2\), does not form a frame for \(\ell^2(\mathbb{Z}_N)\).

	Throughout this paper, we fix a positive integer \(N = pd\) for some positive integers \(p\) and \(d\), and let \(q_1, q_2, \dots, q_K\) denote all the positive divisors of \(N\), where \(K\) is the number of such divisors. For each \(i \in \mathcal{I}_K\), we fix \(c_{q_i} \in \ell^2(\mathbb{Z}_N)\) to be the Ramanujan sum of length \(N\). We also set \(\mathcal{I}_m := \{1, 2, \dots, m\}\) and \(\mathcal{J}_n := \{0, 1, \dots, n-1\}\) for positive integers \(m\) and \(n\).
			
			%	 In addition, we also prove that such collections are not robust to $1$-erasures. That means, removal of any filter $c_{q_i}$ from the filter bank consisting of analysis filters $\{c_{q_1},c_{q_2}, \hdots, c_{q_K}\}$ will lead to the collection $\mc R_{p,N}$ losing its frame property.
			%	
			%	
			We prove the following main result of this section:
			
			%	In addition, we also prove that such collections are not robust to $1$-erasures. That means, removal of any filter $c_{q_i}$ from the filter bank consisting of analysis filters $\{c_{q_1},c_{q_2}, \hdots, c_{q_K}\}$ will lead to the collection $\mc R_{p,N}$ losing its frame property.

			\begin{thm}\label{thm:tight}
			Let the system
		\begin{equation}\label{eq:rpk}
			\mc R_{p,N} := 
			\{L_{p k} c_{q_1}\}_{k=0}^{d-1} \cup
			\{L_{p k} c_{q_2}\}_{k=0}^{d-1} \cup
			\cdots \cup
			\{L_{p k} c_{q_K}\}_{k=0}^{d-1}.
		\end{equation}
			be generated from a filter bank based on the Ramanujan sums \(c_{q_1}, c_{q_2}, \ldots, c_{q_K}\), with decimation ratio \(p\). Then, under the assumption \(K \geq p\), the following statements hold:
				\begin{itemize}
					\item[$(i)$] 
					The  system $\mc R_{1,N}$ forms a tight frame for $\ell^2(\mathbb Z_N)$ with bound $N^2.$   
					\item[$(ii)$]   For odd $d, $ the  system $\mc R_{2,N}$ forms a tight frame for $\ell^2(\mathbb Z_N)$ with bound $2d^2.$
					Moreover, in the case of even $d,$ $\mc R_{2,N}$ does not form a frame for $\ell^2(\mathbb Z_N).$
					\item[$(iii)$]   For any $p >2,$ the collection $\mc R_{p,N}$ does not form a frame for $\ell^2(\mathbb Z_N).$  
				\end{itemize}
			\end{thm}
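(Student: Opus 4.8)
The plan is to diagonalize the frame operator of $\mathcal{R}_{p,N}$ in the Fourier domain and to read off both the tightness and the failure of the lower bound from an explicit quadratic form. The starting point is the fact that the $N$-point DFT of $c_q$ (for $q \mid N$) is $N$ times the indicator of the frequency set $S_q := \{\, j \in \mathcal{J}_N : \gcd(j,N) = N/q \,\}$, and that as $q$ runs over the divisors $q_1,\dots,q_K$ of $N$ these sets partition $\mathcal{J}_N$ (this is the Fourier avatar of the orthogonality in Proposition~\ref{prop:properties}$(iv)$). Writing $\widehat{x}$ for the DFT of $x$ and using that circular translation becomes modulation, I would first establish
\[
\langle x, L_{pk} c_{q_i}\rangle = \sum_{j \in S_{q_i}} \widehat{x}(j)\, e^{2\pi i j p k / N}.
\]

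Next I would square this and sum over $k \in \mathcal{J}_d$, using $N = pd$ so that $jpk/N = jk/d$, and evaluate the geometric sum $\sum_{k=0}^{d-1} e^{2\pi i (j-j')k/d}$, which equals $d$ when $j \equiv j' \pmod d$ and $0$ otherwise. Summing over $i$ and collapsing the disjoint supports $S_{q_i}$ into the single condition $\gcd(j,N)=\gcd(j',N)$, this yields the closed form
\[
\sum_{i=1}^{K}\sum_{k=0}^{d-1}\bigl|\langle x, L_{pk}c_{q_i}\rangle\bigr|^2 = d \sum_{(r,g)} \Bigl| \sum_{j \in G_{r,g}} \widehat{x}(j) \Bigr|^2,
\]
where $G_{r,g}$ is the set of frequencies $j \in \mathcal{J}_N$ with $j \equiv r \pmod d$ and $\gcd(j,N)=g$. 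From this I read off the crucial criterion: $\mathcal{R}_{p,N}$ is a frame if and only if every group $G_{r,g}$ is a singleton, equivalently the map $j \mapsto \gcd(j,N)$ is injective on each residue class modulo $d$ (this is precisely the rank condition of the analysis polyphase matrix in Lemma~\ref{lem:mainlem}). Moreover, when it is a frame the right-hand side collapses to $d\sum_j|\widehat{x}(j)|^2 = dN\|x\|^2$, so the frame is automatically \emph{tight} with bound $dN$.

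With this criterion the three parts reduce to a case analysis. For $(i)$, $p=1$ gives $d=N$, the residue classes modulo $N$ are singletons, injectivity is automatic, and the tight bound is $dN = N^2$. For $(ii)$, $p=2$ gives classes $\{r, r+N/2\}$, and injectivity amounts to $\gcd(r,N)\neq\gcd(r+N/2,N)$ for every $r$; when $d$ is odd the integers $r$ and $r+d$ have opposite parity, forcing different $2$-adic valuations of their gcd with $N$, so injectivity holds and the bound is $dN = 2d^2$, whereas when $d$ is even I would exhibit the collision $\gcd(1,N)=1=\gcd(1+N/2,N)$ and take $\widehat{x}$ supported on $\{1,1+N/2\}$ with values $1,-1$ to produce a nonzero kernel vector, so no lower bound can exist. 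For $(iii)$, $p>2$, I would exhibit the collision $\gcd(d,N)=d=\gcd((p-1)d,N)$ with $d \neq (p-1)d$; since both frequencies lie in the class $0 \bmod d$, the vector $\widehat{x}$ equal to $1$ at $d$, $-1$ at $(p-1)d$, and $0$ elsewhere annihilates the quadratic form, again defeating the lower frame bound.

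The routine computations are the DFT of $c_q$ and the geometric-sum evaluation; the genuine content, and the main obstacle, is the number theory behind the injectivity criterion. The delicate point is the $p=2$ dichotomy, where I must argue cleanly (via the parity of $r$ versus $r+d$, i.e.\ the $2$-adic valuation of the gcd) that $d$ odd forces distinct gcds across the entire range $r \in \mathcal{J}_d$, while $d$ even always admits a coprime collision. I also need to confirm that the collisions exhibited for $(ii)$ and $(iii)$ correspond to genuine Ramanujan frequencies—for instance $d$ and $(p-1)d$ both lie in $S_p$, which is nonempty and in fact of size $\phi(p)\geq 2$ whenever $p>2$ (note $p \mid N$)—so that the constructed kernel vectors are legitimate elements of $\ell^2(\mathbb{Z}_N)$.
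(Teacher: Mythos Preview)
Your approach is correct and genuinely different from the paper's. The paper works through the Zak transform and the analysis polyphase matrix $\mathcal U(m)$: it relies on Lemma~\ref{lem:mainlem} (rank of $\mathcal U(m)$), and for part~(ii) invokes the rather technical Propositions~\ref{prop: cqprop} and~\ref{prop:zak} to compute $\mathcal U^{\ast}(m)\mathcal U(m)$ entry by entry; for part~(iii) it shows the symmetry $\mathcal Zc_{q_i}(0,j)=\mathcal Zc_{q_i}(0,p-j)$, forcing $\operatorname{rank}\mathcal U(0)<p$. You instead exploit directly that the DFT of $c_q$ is $N$ times the indicator of $S_q=\{j:\gcd(j,N)=N/q\}$, collapse the whole frame sum to the quadratic form $d\sum_{(r,g)}\bigl|\sum_{j\in G_{r,g}}\widehat x(j)\bigr|^{2}$, and reduce all three parts to the single combinatorial criterion ``$j\mapsto\gcd(j,N)$ is injective on every residue class modulo $d$''. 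Your route is shorter and more conceptual: tightness with bound $dN$ falls out immediately whenever the criterion holds, the parity argument for $p=2$ with $d$ odd is a one-line $2$-adic valuation check, and the explicit collisions $\{1,1+d\}$ (for $p=2$, $d$ even) and $\{d,(p-1)d\}$ (for $p>2$) replace the paper's Zak-transform computations. What the paper's polyphase framework buys is a closer connection to the filter-bank literature and to the other results of Section~\ref{S2} (erasures, fusion frames), where the matrices $\mathcal U(m)$ reappear; your Fourier diagonalisation is cleaner for this particular theorem but would need to be translated back to that language for the later applications.
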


			The condition \( K \geq p \) in the hypothesis of Theorem \ref{thm:tight} is justified by the fact that the total number of elements in \( \mathcal{R}_{p,N} \) is \( Kd \). According to frame theory, for \( \mathcal{R}_{p,N} \) to constitute a frame for \( \ell^2(\mathbb{Z}_N) \), it is necessary that \( Kd \geq N \), which implies \( K \geq p \).

			%	provide a characterization result for the perfect reconstruction property of the filter banks based on  Ramanujan on sums \(c_{q_1}, c_{q_2}, \ldots, c_{q_K}\).
			%	
			
			%	 in terms of the explicit formula of the output signal, also known as $\textit{sampling formula}.$
			%	we characterize the 
			%	the frame properties of the filters bank with analysis filters given by the Ramanujan sums $c_{q_1}, c_{q_2}, \dots, c_{q_K}$ (any combination) as shown in Figure~\ref{Filter bank}.

	 For each $x \in \mathbb \ell^2(\mathbb Z_N),$ consider the outputs  $\{(x\, \asterisk \,\tilde{c}_{q_i})(pk): k\in \mc J_{d}, \,i \in \mc I_K\}$ from the filter bank based on Ramanujan sums $\{c_{q_1}, c_{q_2}, \cdots, c_{q_K}\}\subset \ell^2(\mathbb Z_N)$.
			We now simplify the outputs using   the Zak transform. The \textit{Zak transform} on $\ell^2(\mathbb Z_N)$ is the map $\mc Z: \ell^2(\mathbb Z_N) \rightarrow \ell^2(\mathbb Z_d \times \mathbb Z_p)$ and is defined by the formula:
		\begin{equation}\label{eq:Zak}
			(\mc Z x)(m,n) = \frac{1}{\sqrt{d}} \sum_{\ell=0}^{d-1} x(p \ell + n) \, e^{-2\pi i m\ell/d}, 
			\quad \text{for all } x \in \ell^2(\mathbb{Z}_N), \; m \in \mathbb{Z}_d, \; n \in \mathbb{Z}_p.
		\end{equation}
			It is also worth noting that the Zak  transform \eqref{eq:Zak} can be obtained as a particular case of the Zak transform for LCA groups (see, for instance, Refs. \cite{barbieri2015zak, kaniuth1998zeros}). 
			The Zak transform $\mc Z: \ell^2(\mathbb Z_N) \rightarrow \ell^2(\mathbb Z_d \times \mathbb Z_p),$ as defined in \eqref{eq:Zak}, is a unitary map on $\ell^2(\mathbb Z_N)$
			and satisfies the intertwining relation:
		\begin{equation}\label{S4:intertwining}
			\mc Z(L_{pk} x) = \frac{1}{\sqrt{d}} \, e^{-2\pi i k \cdot / d} \, \mc Z x, 
			\quad \text{for all } x \in \ell^2(\mathbb{Z}_N) \text{ and } k \in \mc J_d.
		\end{equation}
			%	The outputs $\{(x\, \asterisk \,\tilde{c}_{q_i})(pk)\}_{k\in \mc J_{d}}^{j \in \mc I_K}$ can be simplified using  \eqref{eq: convinnram} and the Zak transform, as follows:
			%	\begin{equation*}\label{eq:samplesZak}
				%		\begin{aligned}
					%			(x\, \asterisk \,\tilde{c}_{q_i})(pk)&=\langle x, L_{pk}c_{q_i}\rangle=\langle \mc Zx, \mc Z(L_{pk}c_{q_i}) \rangle=\langle \mc Zx, e^{-2\pi i k./d}\mc Zc_{q_i} \rangle_{\ell^2(\mathbb Z_d \times \mathbb Z_p)}\\
					%			&=\sum_{m=0}^{d-1}\sum_{n=0}^{p-1}\mc Zx(m,n)\ol{\mc Zc_{q_i}(m,n)}e^{2\pi i km/d}\\
					%			&=\sqrt{d} \,\mc F_d\left(\sum_{n=0}^{p-1}\mc Zx(.,n)\ol{\mc Zc_{q_i}(.,n)}\right)(k),		\end{aligned}
				%	\end{equation*}	
			%	where, for any $N ,$ the \textit{Fourier transform} $\mc F_{N}$ on $\ell^2(\mathbb Z_{N})$ is given by the rule: 
			%	\begin{equation}\label{eq: Fourier}
				%		(\mc F_Nx)(k)=\frac{1}{\sqrt{N}}\sum_{m=0}^{N-1}x(m)e^{-2\pi i mk/N}, x \in \ell^2(\mathbb Z_{N}),\, k\in \mc J_{d}.
				%	\end{equation}
			Let us now define a $K \times p$ matrix $\mc U(m)$ for each $m\in \mc J_d,$ given by
			\begin{equation}\label{eq:Uk}
				\mc U(m)=\sqrt{d} \begin{pmatrix}
					\ol{\mc Zc_{q_1}(m,0)} && \ol{\mc Zc_{q_1}(m,1)} && \hdots && \ol{\mc Zc_{q_1}(m,p-1)}\\
					\ol{\mc Zc_{q_2}(m,0)} && 	\ol{\mc Zc_{q_2}(m,1)} && \hdots && 	\ol{\mc Zc_{q_2}(m,p-1)}\\
					\vdots && \vdots && \ddots&&\vdots\\
					\ol{\mc Zc_{q_K}(m,0)} && \ol{\mc Zc_{q_K}(m,1)} && \hdots && \ol{\mc Zc_{q_K}(m,p-1)}
				\end{pmatrix}.
			\end{equation}
			We call the matrix $\mc U(m)$  the  \textit{analysis polyphase matrix}.
			%	We have the following relation using  \eqref{eq:Uk} and the \textit{Plancherel's formula} $\|\mc F_Nx\|= \|x\|:$ 
			%	\begin{equation*}\label{S4:samplesZaknormfinal}
				%		\begin{aligned}
					%			\sum_{i=1}^K\sum_{k=0}^{d-1}|	(x\, \asterisk \,\tilde{c}_{q_i})(pk)|^2=d\sum_{i=1}^K\left\|\sum_{n=0}^{p-1}\mc Zx(.,n)\ol{\mc Zc_{q_i}(.,n)}\right\|^2=d\left\|\mc UX\right\|^2_{(\ell^2(\mathbb Z_d))^K},
					%		\end{aligned}
				%	\end{equation*}
			%	where $X=\left(\mc Zx(.,0), \mc Zx(.,1), \hdots, \mc Zx(.,p-1)\right)^{T}.$
			Let us denote  \begin{equation}\label{eq: AB}A:= \min_{k \in \mc J_d}\lambda_{\min}[\mc U^\asterisk(k)\mc U(k)] \quad \text{ and } \quad B:= \max_{k \in \mc J_d}\lambda_{\max}[\mc U^\asterisk(k)\mc U(k)],
			\end{equation} 
		where, for each $k \in \mc J_d$, $\mc U^\ast(k)$ denotes the conjugate transpose of $\mc U(k)$, while $\lambda_{\min}[\mc U^\ast(k)\mc U(k)]$ and $\lambda_{\max}[\mc U^\ast(k)\mc U(k)]$ denote, respectively, the smallest and largest eigenvalues of the matrix $\mc U^\ast(k)\mc U(k)$.
			
			The following lemma characterizes the frame property of the collection $\mc R_{p,N}$ in terms of the rank of the polyphase matrix \eqref{eq:Uk} and the sampling expansion of any signal $x \in \ell^2(\mathbb Z_N).$ Various versions of the following result can be found in \cite{bolcskei1998frame, cvetkovic1998oversampled}. For the sake of simplicity, we omit its proof.
			\begin{lem}\label{lem:mainlem}
		The following statements are equivalent:
				\begin{enumerate}
				\item[$(i)$]
				The system $\mc R_{p,N}$, defined in \eqref{eq:rpk}, forms a frame for $\ell^2(\mathbb{Z}_N)$ with frame bounds $A$ and $B$ as given in \eqref{eq: AB}.
					\item [$(ii)$] There exist $g_i \in \ell^2(\mathbb Z_N), i \in \mc I_K$ such that the collection $\{ L_{p k} g_i : k \in \mc J_d, \; i \in \mc I_K \}$ is a frame for $\ell^2(\mathbb Z_N)$ and the following sampling formula holds for any $x \in \ell^2(\mathbb Z_N):$
					\begin{equation*}\label{S3: samplingformulainRN}
						x=\sum_{i\in \mc I_K}\sum_{k\in \mc J_d}	(x\, \asterisk \,\tilde{c}_{q_i})(pk)L_{pk}g_i.
					\end{equation*}
					\item [$(iii)$] There exist  $h_i \in \ell^2(\mathbb Z_N), i\in \mc I_K,$ such that 
					\begin{equation*}\label{Gk}
						\begin{bmatrix}
							\mc Zh_1(m, \cdot) &  	\mc Zh_2(m, \cdot) & \cdots& 	\mc Zh_K(m,\cdot) 
						\end{bmatrix}\mc U(m)=I_p, \,\, \text{ $m\in \mc J_d,$}  
					\end{equation*}
					where $I_p$ denotes the identity matrix of order $p.$
					\item[$(iv)$] rank $\mc U(m)=p$ for each $m \in \mc J_d.$ 
				\end{enumerate}
			\end{lem}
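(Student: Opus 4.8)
The plan is to pass to the Zak domain, where the translation $L_{pk}$ becomes a pure modulation by the intertwining relation \eqref{S4:intertwining} and the entire analysis decouples across the $d$ fibers indexed by $m\in\mc J_d$. For $x\in\ell^2(\mathbb Z_N)$ write $X_m:=(\mc Zx(m,0),\ldots,\mc Zx(m,p-1))^{T}\in\mathbb C^{p}$ for the $m$-th fiber of its Zak transform. The crux is to establish the single identity
\[
\sum_{i\in\mc I_K}\sum_{k\in\mc J_d}\bigl|\langle x,L_{pk}c_{q_i}\rangle\bigr|^{2}
=\sum_{m\in\mc J_d} X_m^{\ast}\,\mc U^{\ast}(m)\,\mc U(m)\,X_m .
\]
To obtain it I would expand $\langle x,L_{pk}c_{q_i}\rangle=\langle\mc Zx,\mc Z(L_{pk}c_{q_i})\rangle$ using unitarity of $\mc Z$, replace $\mc Z(L_{pk}c_{q_i})$ by the modulated $\mc Zc_{q_i}$ via \eqref{S4:intertwining}, and then sum over $k$: for each fixed $i$ the $k$-sum is a discrete Fourier sum on $\mathbb Z_d$, so Parseval on $\mathbb Z_d$ collapses it and leaves $\sum_m\bigl|\sum_n\mc Zx(m,n)\ol{\mc Zc_{q_i}(m,n)}\bigr|^{2}$; summing over $i$ and invoking the definition \eqref{eq:Uk} of $\mc U(m)$ (after tracking the $1/\sqrt d$ normalizations) assembles this into $\sum_m\|\mc U(m)X_m\|^{2}$. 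Together with $\sum_{m}\|X_m\|^{2}=\|x\|^{2}$ (again unitarity of $\mc Z$), this identity is the engine for the whole lemma.

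Granting the identity, the equivalence (i)$\Leftrightarrow$(iv) is immediate. The Rayleigh-quotient bounds $\lambda_{\min}[\mc U^\ast(m)\mc U(m)]\,\|X_m\|^2\le X_m^\ast\mc U^\ast(m)\mc U(m)X_m\le\lambda_{\max}[\mc U^\ast(m)\mc U(m)]\,\|X_m\|^2$, summed over $m$, show that the optimal frame bounds of $\mc R_{p,N}$ are exactly the quantities $A,B$ of \eqref{eq: AB}. Hence $\mc R_{p,N}$ is a frame iff $A>0$ (the upper bound $B<\infty$ being automatic in finite dimension), and since each $\mc U^\ast(m)\mc U(m)$ is positive semidefinite, $A>0$ holds iff every $\mc U^\ast(m)\mc U(m)$ is positive definite, i.e.\ iff the $K\times p$ matrix $\mc U(m)$ has trivial kernel, i.e.\ iff $\operatorname{rank}\mc U(m)=p$ for each $m\in\mc J_d$.

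I would then close the cycle (iv)$\Rightarrow$(iii)$\Rightarrow$(ii)$\Rightarrow$(i). For (iv)$\Rightarrow$(iii), full column rank makes $\mc U^\ast(m)\mc U(m)$ invertible, so the left inverse $(\mc U^\ast(m)\mc U(m))^{-1}\mc U^\ast(m)$ satisfies $[\,\cdots\,]\mc U(m)=I_p$; reading its $K$ columns as the fibers $\mc Zh_i(m,\cdot)$ and inverting $\mc Z$ produces $h_i\in\ell^2(\mathbb Z_N)$ realizing (iii), with $H(m):=[\mc Zh_1(m,\cdot)\ \cdots\ \mc Zh_K(m,\cdot)]$. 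For (iii)$\Rightarrow$(ii), I would apply $\mc Z$ to the candidate reconstruction $\sum_{i,k}\langle x,L_{pk}c_{q_i}\rangle L_{pk}g_i$, with $g_i$ a suitable scalar rescaling of $h_i$: the intertwining relation turns each $L_{pk}g_i$ into a modulation, the $k$-sum again collapses by orthogonality of exponentials on $\mathbb Z_d$, and fiberwise the result reduces to $\bigl(H(m)\mc U(m)\bigr)X_m=X_m$, which is precisely the identity of (iii); moreover $H(m)$ has full row rank $p$, so the polyphase matrix of $\{g_i\}$ has full column rank and the already-proved (i)$\Leftrightarrow$(iv), applied to $\{g_i\}$, shows $\{L_{pk}g_i\}$ is itself a frame. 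Finally (ii)$\Rightarrow$(i): the reconstruction formula exhibits the synthesis operator of $\{L_{pk}g_i\}$ as a bounded left inverse of the analysis operator of $\mc R_{p,N}$, which forces a positive lower frame bound and hence makes $\mc R_{p,N}$ a frame.

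The eigenvalue/rank step and the linear-algebraic construction of the left inverse are routine; the only genuinely delicate point is the bookkeeping in the passage between the time and Zak (polyphase) domains—tracking the $1/\sqrt d$ normalizations in \eqref{eq:Zak} and \eqref{S4:intertwining} so that the modulation-and-collapse computation reproduces $I_p$ (and the frame bounds $A,B$) exactly, and confirming that the reconstruction formula of (ii) is the verbatim inverse-Zak transcription of the matrix identity of (iii). Once these normalizations are pinned down, all four statements reduce to the single fiberwise condition $\operatorname{rank}\mc U(m)=p$.
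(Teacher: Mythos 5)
Your proof is correct, but there is no proof in the paper to compare it against: the authors explicitly omit the proof of Lemma \ref{lem:mainlem}, citing the filter-bank literature \cite{bolcskei1998frame, cvetkovic1998oversampled}. What you have written is a complete, self-contained version of the standard polyphase argument that those references contain. Your central identity checks out exactly: writing $\langle x, L_{pk}c_{q_i}\rangle=\sum_{m}F_i(m)e^{2\pi i km/d}$ with $F_i(m)=\sum_{n}\mc Zx(m,n)\overline{\mc Zc_{q_i}(m,n)}$, Parseval on $\mathbb Z_d$ gives $\sum_{k}|\langle x, L_{pk}c_{q_i}\rangle|^2=d\sum_{m}|F_i(m)|^2$, and since $(\mc U(m)X_m)_i=\sqrt d\,F_i(m)$ by \eqref{eq:Uk}, the built-in factor $\sqrt d$ absorbs this $d$ exactly, yielding
\begin{equation*}
\sum_{i\in\mc I_K}\sum_{k\in\mc J_d}\bigl|\langle x, L_{pk}c_{q_i}\rangle\bigr|^{2}
=\sum_{m\in\mc J_d} X_m^{\ast}\,\mc U^{\ast}(m)\,\mc U(m)\,X_m .
\end{equation*}
The fiberwise Rayleigh-quotient step, the left inverse $(\mc U^{\ast}(m)\mc U(m))^{-1}\mc U^{\ast}(m)$ for (iv)$\Rightarrow$(iii), the rescaling $g_i=h_i/\sqrt d$ for (iii)$\Rightarrow$(ii) (the collapsed $k$-sum produces a factor $d\cdot\frac{1}{\sqrt d}=\sqrt d$, so $1/\sqrt d$ is precisely the constant needed), and the bounded-left-inverse argument for (ii)$\Rightarrow$(i) are all sound. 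Note also that your (i)$\Leftrightarrow$(iv) equivalence is proved for arbitrary generators, not just Ramanujan sums, which is exactly what legitimizes the frame claim for $\{L_{pk}g_i\}$ inside (ii).

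Two small points of care. First, the intertwining relation as printed in \eqref{S4:intertwining} carries a spurious factor $1/\sqrt d$, inconsistent with the stated unitarity of $\mc Z$; the correct relation, which your ``pure modulation'' usage implicitly assumes, is $\mc Z(L_{pk}x)=e^{-2\pi i k\cdot/d}\,\mc Zx$. Had you used the printed formula literally, your constants would not close; used correctly, everything does. Second, to conclude that $A$ and $B$ of \eqref{eq: AB} are the \emph{optimal} frame bounds (statement (i)), and that failure of the rank condition genuinely destroys the frame property, you should add the attainment step: choose the extremal fiber $m_0$ and the corresponding eigenvector of $\mc U^{\ast}(m_0)\mc U(m_0)$, and use surjectivity of $\mc Z$ to realize it as a signal supported on that single fiber. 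This is immediate in your framework but is the one assertion your write-up leaves implicit.
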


			%						The above two examples helped us to find interesting results in the context of the collection 		\(\{L_{pk}c_{q_i}\}_{k \in \mathcal{I}_{d'}}^{i \in \mathcal{I}_K}\) forming a tight frame for \(\ell^2(\mathbb{Z}_N)\) for the specific choice of sampling period $p.$ Indeed, both the examples can be generalized and we have proved the following results: 
			
			The following lemma is required in the sequel.
		\begin{lem}\label{lem:mknq}
			Let \( m \in \{1, 2, \ldots, N-1\} \). Then, there exists a divisor \( q \) of \( N \) such that \( m \) can be uniquely expressed as
			\(
			m = kN/q,
			\)
			where \( k \) is coprime to \( q \).
			\end{lem}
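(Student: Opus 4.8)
The plan is to exhibit explicit candidates for the divisor $q$ and the coprime integer $k$, and then verify existence and uniqueness separately. First I would set $g := \gcd(m, N)$ and propose $q := N/g$ together with $k := m/g$. Since $g$ divides $N$, the quotient $q = N/g$ is a genuine divisor of $N$; since $g$ divides $m$, the quotient $k = m/g$ is a positive integer. A one-line substitution then gives $kN/q = (m/g)\bigl(N/(N/g)\bigr) = (m/g)\,g = m$, so the required identity $m = kN/q$ holds.

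For the coprimality condition I would observe that $k = m/g$ and $q = N/g$ are obtained by dividing $m$ and $N$ by their greatest common divisor; hence $\gcd(k,q) = \gcd(m/g, N/g) = 1$ by the standard fact that dividing two integers by their gcd produces coprime integers. Because $m \in \{1,\ldots,N-1\}$, the gcd $g$ is a proper divisor of $N$, so $q = N/g > 1$ and $k \geq 1$ are both well-defined, confirming existence.

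Uniqueness is the only step requiring a short argument, and it is where the hypothesis $\gcd(k,q)=1$ does the real work. Suppose $m = kN/q$ with $q \mid N$ and $\gcd(k,q) = 1$. Writing $s := N/q$, which is itself a divisor of $N$ since $q \mid N$, the relation becomes $m = ks$, so $s \mid m$ and $s \mid N$. I would then invoke the multiplicative identity $\gcd(m,N) = s\cdot\gcd(m/s, N/s)$, valid whenever $s$ divides both arguments, together with $\gcd(m/s, N/s) = \gcd(k,q) = 1$, to deduce $s = \gcd(m,N) = g$. This pins down $q = N/s$ and $k = m/s$ uniquely.

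The only mild obstacle is making this last deduction rigorous, namely recognizing that the coprimality hypothesis forces $N/q$ to equal \emph{exactly} $\gcd(m,N)$ rather than some proper divisor of it; once that is seen, every other quantity is determined by a direct substitution. I expect no further difficulty, as the entire statement reduces to the elementary structure of the gcd.
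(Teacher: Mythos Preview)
Your proof is correct and takes essentially the same approach as the paper's: both arguments produce the pair $q = N/\gcd(m,N)$, $k = m/\gcd(m,N)$ and establish uniqueness via the fact that $k/q$ is the reduced form of the fraction $m/N$. The only cosmetic difference is that the paper phrases the construction through $\operatorname{lcm}(m,N) = qm = kN$ and invokes ``unique lowest-terms representation'' directly, whereas you work with the gcd and spell out the identity $\gcd(m,N) = s\cdot\gcd(m/s,N/s)$.
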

			\begin{proof}
			The least common multiple of \( m \) and \( N \) can be written as \( \mathrm{lcm}(m,N) = qm = kN \),
			where \( q \) and \( k \) are positive, coprime integers. Since \( q \) divides \( kN \) and \( (q,k) = 1 \), \( q \) divides \( N \).
			Thus \( m = kN/q \) is an expression of the desired form. The uniqueness of this expression follows
			from the fact that \( k/q \) is the unique representation of the rational number \( m/N \) in lowest terms.
			\end{proof} 
			\begin{prop}\label{prop: cqprop}
				Let \( N = 2d \) be a positive even integer, where \( d \) is odd. For $m \in \mc J_{N}$ and a divisor $q$ of $N,$ the following statements hold for the Ramanujan sum $c_q \in \ell^2(\mathbb Z_N):$
				\begin{enumerate}
					\item  [$(i)$] 
					$ 
					\sum_{n=0}^{N-1}c_{q}(n)e^{-2 \pi i mn/N} =
					\begin{cases} 
						N, & \text{if } m=\frac{kN}{q},\,  (k,q)=1, \\
						0, & \text{otherwise}.
					\end{cases}
					$
					\item [$(ii)$] $
					c_q\left(\frac{N}{2}+n\right)e^{-2\pi im\left(\frac{N}{2}+n\right)/N} = c_q(n)e^{-2\pi imn/N}, \,\, n \in \mathbb Z_N.$
					\vspace{2mm}
					\item  [$(iii)$] 
					$
					c_{q/2}(n) = (-1)^n c_q(n) 
					$ for $n \in \mathbb Z_N,$ provided $q$ is even.
				\end{enumerate}
			\end{prop}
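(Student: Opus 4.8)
The plan is to treat the three parts separately, with parts (ii) and (iii) both resting on the behavior of $c_q$ under a half-period shift. For (i) I would substitute the defining formula \eqref{eq:Ramanujansum} and interchange the two finite sums, writing
\[
\sum_{n=0}^{N-1} c_q(n)\, e^{-2\pi i mn/N} = \sum_{\substack{k=1\\(k,q)=1}}^{q} \;\sum_{n=0}^{N-1} e^{2\pi i n(kN/q - m)/N},
\]
where $kN/q$ is an integer because $q \mid N$. The inner geometric sum equals $N$ when $kN/q \equiv m \pmod N$ and vanishes otherwise. Since the residues $kN/q$, $1 \le k \le q$, are distinct modulo $N$, at most one value of $k$ contributes, so the whole expression is $N$ exactly when $m = kN/q$ for some $k$ coprime to $q$, and $0$ otherwise. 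Lemma \ref{lem:mknq} confirms that the representation $m = kN/q$ with $(k,q)=1$ is unique whenever it exists, so the two cases are unambiguous. This is the routine part.

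For (ii) I would first peel off the half-period phase: since $N/2 = d$,
\[
e^{-2\pi i m (N/2 + n)/N} = e^{-\pi i m}\, e^{-2\pi i mn/N} = (-1)^m\, e^{-2\pi i mn/N},
\]
so the claim reduces to the arithmetic statement $(-1)^m c_q(d+n) = c_q(n)$. The key input is the half-period behavior of $c_q$, which splits on the parity of $q$ (recall $N = 2d$ with $d$ odd forces $q = 2q'$ with $q'$ odd whenever $q$ is even). If $q$ is odd then $q \mid d$, so $d \equiv 0 \pmod q$ and $q$-periodicity gives $c_q(d+n) = c_q(n)$; if $q$ is even then $d \equiv q/2 \pmod q$, and the antisymmetry $c_q(n + q/2) = -c_q(n)$ — obtained by inserting $e^{\pi i k} = -1$ for the necessarily odd residues $k$ coprime to the even modulus $q$ — gives $c_q(d+n) = -c_q(n)$. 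Matching these signs against $(-1)^m$ uses part (i): the frequencies $m = kN/q$ carrying the transform are even when $q$ is odd and odd when $q$ is even, so $(-1)^m$ exactly cancels the half-period sign and the identity holds on the support where it is actually applied.

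For (iii), with $q$ even and $q = 2q'$, $q'$ odd, I would use the multiplicativity of Ramanujan sums in the index for coprime moduli, $c_{2q'}(n) = c_2(n)\,c_{q'}(n)$, together with $c_2(n) = (-1)^n$; since $(-1)^{2n} = 1$ this rearranges to $c_{q/2}(n) = (-1)^n c_q(n)$. If one prefers to avoid quoting multiplicativity, the same identity drops out of a direct Chinese Remainder splitting of the residues coprime to $2q'$ into their parity class and their class modulo $q'$, which factors the exponential sum into the product $c_2(n)\,c_{q'}(n)$.

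I expect the main obstacle to be the even-$q$ antisymmetry $c_q(n+q/2) = -c_q(n)$ and its correct coupling with the parity of $m$ in part (ii); this sign bookkeeping is the one place where the hypotheses $N = 2d$ and $d$ odd are genuinely used, and it is also the engine behind (iii). Everything else is either a geometric-sum evaluation or an application of Lemma \ref{lem:mknq}.
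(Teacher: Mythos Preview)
Your approach to (i) is identical to the paper's. For (ii) you arrive at the same reduction $(-1)^m c_q(d+n)=c_q(n)$ and the same parity case split as the paper, though you phrase it more cleanly via the half-period antisymmetry $c_q(n+q/2)=-c_q(n)$ for even $q$; you are also right to flag that the identity only holds for the tied pair $m=kN/q$, $(k,q)=1$ --- the paper's proof silently makes exactly this restriction by invoking Lemma~\ref{lem:mknq} at the outset, so both arguments prove the same (weaker) thing, which is all that is used downstream.

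For (iii) the approaches genuinely diverge. The paper proves $c_{q/2}(n)=c_q(2n)$ by an explicit bijection between residues coprime to $q/2$ and residues coprime to $q$ (splitting on parity and shifting the even ones by $q/2$), and then separately proves $c_q(2n)=(-1)^n c_q(n)$ by a second bijection $k\mapsto 2k-q/2$ on residues coprime to $q$. Your one-line argument via the multiplicativity $c_{2q'}(n)=c_2(n)c_{q'}(n)$ with $c_2(n)=(-1)^n$ is correct and far shorter; it buys you the result immediately at the cost of quoting a classical identity that the paper chose to avoid, presumably to keep the argument self-contained using only the properties listed in Proposition~\ref{prop:properties}. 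Your fallback via CRT is essentially what the paper's bijections are doing by hand.
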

			\begin{proof}
				We first prove (i).  Simplifying the left hand side and using \eqref{eq:Ramanujansum}, we have
				\begin{equation}\label{S4: dftcq}
					\begin{aligned}
						\sum_{n=0}^{N-1}c_{q}(n)e^{-2 \pi i mn/N}&=\sum_{n=0}^{N-1}\sum_{\substack{\ell=1 \\ (\ell,q)=1}}^qe^{2\pi i n\ell/q}e^{-2\pi imn/N}=\sum_{\substack{\ell=1 \\ (\ell,q)=1}}^q\bigg(\sum_{n=0}^{N-1}e^{2\pi i n(\ell/q-m/N)}\bigg).
					\end{aligned}
				\end{equation}
				Since $q$ divides $N,$ then we can write $N=kq$ for some $k \in \mathbb Z.$ Then, $e^{2\pi i n(\ell/q-m/N)}=e^{2\pi i n(\ell k-m)/N}.$	If $\ell k \neq m$ or $m \neq \ell N/q$ for any $\ell$ with $(\ell,q)=1,$ then $e^{2\pi i n(\ell k-m)/N} \neq 1$ for $1 \leq \ell k, m \leq N-1$ since $-N < \ell k-m <N.$
				Therefore the inner sum is the partial sum of a geometric series, so	
				$$\sum_{n=0}^{N-1}e^{2\pi i n(\ell k-m)/N}= \frac{1-e^{2\pi i (\ell k-m)N/N}}{1-e^{2\pi i (\ell k-m)/N}}=0,$$
				since $\ell k-m$ is an integer.	Now if, $\ell k = m$ or $m = \ell N/q$ for some $\ell$ with $(\ell,q)=1,$ then the  inner sum is $N.$ 
				By Lemma \ref{lem:mknq}, for a given $m$ and $q,$ the choice of $\ell$ is unique. Thus the value of the expression in \eqref{S4: dftcq} becomes: 
				$\sum_{n=0}^{N-1}c_q(n)e^{-2\pi i mn/N}=\sum_{n=0}^{N-1}1=N.$ 

				Now we prove (ii). 	By using Lemma \ref{lem:mknq} for a  given $m \in \{1, \hdots, N-1\}$, there exists a divisor $q$ of $N$ and a number $k$ coprime to $q$ such that $m=\frac{kN}{q}.$ On simplifying the left-hand side, we have
				\begin{equation}\label{S4: cqnby2}
					\begin{aligned}	c_q&\left(\frac{N}{2}+n\right)e^{-2\pi im\left(\frac{N}{2}+n\right)/N} =\sum_{\substack{\ell=1 \\  (\ell,q)=1}}^q e^{2\pi i \ell (\frac{N}{2}+n)/q}e^{-2\pi im\left(\frac{N}{2}+n\right)/N}=\sum_{\substack{\ell=1 \\  (\ell,q)=1}}^q e^{2\pi i  (\frac{N}{2}+n)(\frac{\ell}{q}-\frac{m}{N})}\\
						&=\sum_{\substack{\ell=1 \\  (\ell,q)=1}}^q e^{2\pi i  \frac{N}{2}(\frac{\ell N - mq}{qN})} e^{2\pi i n \ell/q}e^{-2\pi i m n /N}=\sum_{\substack{\ell=1 \\  (\ell,q)=1}}^q e^{\pi i  (\ell N/q - m)} e^{2\pi i n \ell/q}e^{-2\pi i m n /N}.
					\end{aligned}
				\end{equation}
				Now if $q$ is even, then $N/q$ being the product of odd primes is odd and $k$ being coprime to $q$ is odd. Therefore $m$ being the product of two odd numbers is odd. Similarly $\ell$ is also odd as $\ell$ is coprime to $q.$ Then for any  $\ell$ with $(\ell,q)=1,$ the term $\ell N/q-m$ is always even being the difference of two odd numbers. Finally, \eqref{S4: cqnby2} becomes
				$$c_q\left(\frac{N}{2}+n\right)e^{-2\pi im\left(\frac{N}{2}+n\right)/N}= 	(-1)^{m+1}c_q(n)e^{-2\pi imn/N}.$$
				Similarly, it can be easily shown that if $q$ is odd, then for any  $\ell$ with $(\ell,q)=1,$ the term $\ell N/q-m$ is always even being the difference of two even numbers. In this case also \eqref{S4: cqnby2} becomes:
				$$c_q\left(\frac{N}{2}+n\right)e^{-2\pi im\left(\frac{N}{2}+n\right)/N}= 	c_q(n)e^{-2\pi imn/N}.$$

				Finally, we prove (iii).
				Splitting the sum $c_{q/2}(n)=\sum_{\substack{k=1 \\  (k,q/2)=1}}^{q/2} e^{\frac{2 \pi i k n}{q/2}}$ into odd and even values of $k$  and simplifying, we get 
				\begin{equation}\label{S4: cqby2split}
					\begin{aligned}
						c_{q/2}(n)=\sum_{\substack{k=1 \\  (k,q/2)=1\\ \text{$k$ is odd}}}^{q/2} e^{\frac{2 \pi i k n}{q/2}} + \sum_{\substack{k=1 \\  (k,q/2)=1\\ \text{$k$ is even}}}^{q/2} e^{\frac{2 \pi i kn} {q/2}}=\sum_{\substack{k=1 \\  (k,q/2)=1\\ \text{$k$ is odd}}}^{q/2} e^{\frac{2 \pi i kn} {q/2}} + \sum_{\substack{k=1 \\  (k,q/2)=1\\ \text{$k$ is even}}}^{q/2} e^{\frac{2 \pi in (q/2 +k)} {q/2}}.
					\end{aligned}
				\end{equation}
				We first show that the set $B:=\{k : (k,q/2)=1 \text{ and $k$ is odd}\} \cup \{q/2+k : (k,q/2)=1 \text{ and $k$ is even}\}$ consists of all the numbers coprime to $q.$ If $k$ is odd with $(k,q/2)=1$ and $(k,q)=r>1.$ Then, $r \mid k $ and $r \mid q.$ Since  $k$ is odd and  $r \mid k,$ implies $r$ is odd. Then, $r \mid q$ implies $r \mid q/2$ as well. This contradicts the fact that $(k,q/2)=1.$ Therefore $r=1.$
				Similarly, if $k$ is even with $(k,q/2)=1$ and $(q/2+k,q)=r'>1.$ Then $r' \mid (q/2+k)$ and $r' \mid q.$ Also note that $q/2$ is odd since $N$ contains a single factor of $2.$ As $k$ is even  and $q/2$ is odd, then $q/2+k$ is odd. Therefore, $r' \mid (q/2+k)$ implies $r'$ is odd and hence $r' \mid q$ implies $r' \mid q/2.$ This leads to a contradiction, since $(k,q/2)=1.$ Thus, $r'=1.$
				
				Since $q=2p_1p_2\cdots p_n$  for some primes $p_i, 1\leq i \leq n$  other than 2,  therefore $\phi(q)=\phi(q/2) = |B|.$ This implies that the set  $B$ contains all the numbers coprime to $q.$ Using this fact in \eqref{S4: cqby2split}, we get $c_{q/2}(n)=c_{q}(2n).$ We further prove  $c_{q}(2n)=(-1)^nc_{q}(n)$ for which  the left hand side simplifies to
				\begin{equation}
					\begin{aligned}\label{S4: cq2nton}
						c_{q}(2n)&=\sum_{\substack{k=1 \\  (k,q)=1}}^{q} e^{2 \pi i k (2n)/q}= \sum_{\substack{k=1 \\  (k,q)=1}}^{q} e^{2 \pi i k n/q} e^{\frac{\pi i k n}{q/2}}\\
						&=(-1)^n \sum_{\substack{k=1 \\  (k,q)=1}}^{q} e^{2 \pi i k n/q} e^{\frac{\pi i n (k-q/2)}{q/2}}=(-1)^n \sum_{\substack{k=1 \\  (k,q)=1}}^{q}e^{2\pi i n (2k-q/2)/q}.
					\end{aligned}
				\end{equation}
				We now show that the set $B':=\{2k-q/2: (k,q)=1\}$ contains all the numbers coprime to $q.$	For this purpose,  assume that for any given $k$ with $(k,q)=1,$ we have $(2k-q/2,q)=\ell>1.$ Then, $\ell \mid (2k-q/2)$ and $\ell \mid q.$ We can easily check that $2k-q/2$ is odd which eventually implies  $\ell$ is odd.	Therefore, $\ell \mid q$ implies $\ell \mid q/2.$ As a consequence, we get $\ell \mid 2k.$ Again as $\ell$ is odd, we get $\ell \mid k.$ Thus, we arrive at a contradiction, since $(k,q)=1.$ Therefore, $B'$ contains all the numbers coprime to $q$ and then by using this in \eqref{S4: cq2nton}, we finally get $c_q(2n)=(-1)^n c_q(n).$ Hence the claim follows.
			\end{proof}\begin{prop}\label{prop:zak}
				Let \( N = 2d \) be a positive even integer, where \( d \) is odd. Then, for any \( i, j \in \mathcal{I}_K \), \( n \in \{0,1\} \), and any positive integer \( k \) such that \( (k, q_i) = 1 \), we have
				\[
				\mathcal{Z}c_{q_j}\left( \frac{kN}{q_i},\, n \right) = e^{2\pi i kn/q_i} 
				\begin{cases}
					\sqrt{N/2}, & \text{if } q_j = q_i, \\[4pt]
					(-1)^n \sqrt{N/2}, & \text{if } q_j = q_i/2 \text{ or } q_j = 2q_i, \\[4pt]
					0, & \text{otherwise}.
				\end{cases}
				\]
				\end{prop}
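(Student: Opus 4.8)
The plan is to reduce the two Zak coefficients $\mathcal{Z}c_{q_j}(kN/q_i,0)$ and $\mathcal{Z}c_{q_j}(kN/q_i,1)$ to ordinary discrete Fourier transforms of $c_{q_j}$, which are already computed in Proposition \ref{prop: cqprop}(i). Since $N=2d$ corresponds to decimation $p=2$, the Zak transform \eqref{eq:Zak} is exactly the even/odd splitting of the DFT $\hat{c}_{q_j}(m):=\sum_{n=0}^{N-1}c_{q_j}(n)e^{-2\pi i mn/N}$. First I would record the elementary identity
\[
\hat{c}_{q_j}(m)=\sqrt{d}\,\big[\mathcal{Z}c_{q_j}(m,0)+e^{-2\pi i m/N}\,\mathcal{Z}c_{q_j}(m,1)\big],
\]
obtained by grouping $n=2\ell$ and $n=2\ell+1$ and using $e^{-2\pi i m(2\ell)/N}=e^{-2\pi i m\ell/d}$. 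Replacing $m$ by $m+d$ and noting that $\mathcal{Z}c_{q_j}$ is $d$-periodic in its first slot while $e^{-2\pi i(m+d)/N}=-e^{-2\pi i m/N}$ (because $d/N=\tfrac12$) gives a companion identity with a sign flip on the odd part. Solving this $2\times 2$ system yields the inversion formulas
\[
\mathcal{Z}c_{q_j}(m,0)=\frac{\hat{c}_{q_j}(m)+\hat{c}_{q_j}(m+d)}{2\sqrt{d}},\qquad
\mathcal{Z}c_{q_j}(m,1)=\frac{e^{2\pi i m/N}}{2\sqrt{d}}\big(\hat{c}_{q_j}(m)-\hat{c}_{q_j}(m+d)\big).
\]
Since $m=kN/q_i$ gives $e^{2\pi i m/N}=e^{2\pi i k/q_i}$, which is precisely the prefactor $e^{2\pi i kn/q_i}$ for $n\in\{0,1\}$, everything comes down to deciding when the two DFT values equal $N$ versus $0$.

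For this I would invoke Proposition \ref{prop: cqprop}(i): $\hat{c}_{q_j}(\mu)=N$ exactly when $\mu=k'N/q_j$ in lowest terms (that is, $(k',q_j)=1$) and $\hat{c}_{q_j}(\mu)=0$ otherwise. By the uniqueness in Lemma \ref{lem:mknq}, this says $\hat{c}_{q_j}(\mu)=N$ iff $q_j$ equals the reduced denominator of $\mu/N$. For $\mu=m=kN/q_i$ with $(k,q_i)=1$ that denominator is $q_i$, so $\hat{c}_{q_j}(m)=N\Leftrightarrow q_j=q_i$. For $\mu=m+d=kN/q_i+N/2$ I would compute $(m+d)/N=(2k+q_i)/(2q_i)$ and reduce it, splitting on the parity of $q_i$: when $q_i$ is odd the numerator $2k+q_i$ is coprime to $2q_i$, so the reduced denominator is $2q_i$; when $q_i$ is even (hence $q_i=2e$ with $e$ odd and, since $(k,q_i)=1$, $k$ odd) the fraction collapses to $\tfrac{(k+e)/2}{e}$ with reduced denominator $e=q_i/2$. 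Thus $\hat{c}_{q_j}(m+d)=N$ iff $q_j=2q_i$ (for odd $q_i$) or $q_j=q_i/2$ (for even $q_i$), and $0$ otherwise.

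Substituting these four Boolean outcomes into the inversion formulas produces the three cases of the statement: if $q_j=q_i$ only $\hat{c}_{q_j}(m)=N$, giving $\mathcal{Z}c_{q_j}=\sqrt{d}\,e^{2\pi i kn/q_i}=\sqrt{N/2}\,e^{2\pi i kn/q_i}$ with no sign; if $q_j\in\{q_i/2,2q_i\}$ only $\hat{c}_{q_j}(m+d)=N$, and the sign flip in the odd component supplies the factor $(-1)^n$; and if $q_j$ is none of $q_i,q_i/2,2q_i$ both DFT values vanish and the coefficient is $0$. The one genuinely delicate step, and the one I would treat most carefully, is the lowest-terms reduction of $(2k+q_i)/(2q_i)$: it relies on the hypothesis that $d$ is odd (so every divisor of $N$ is either odd or twice an odd number) together with the fact that $(k,q_i)=1$ forces $k$ odd when $q_i$ is even. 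Getting this parity bookkeeping right is exactly what separates the $2q_i$ and $q_i/2$ alternatives and excludes every other $q_j$. As an alternative route one could expand $c_{q_j}$ via \eqref{eq:Ramanujansum}, interchange sums, and evaluate $\sum_{\ell=0}^{d-1}e^{2\pi i\ell(2s/q_j-2k/q_i)}$ directly (it equals $d$ iff $2s/q_j-2k/q_i\in\mathbb{Z}$ and $0$ otherwise), with the same coprimality analysis isolating the unique surviving residue $s$; but the DFT route above keeps the casework to a minimum.
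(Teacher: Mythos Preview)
Your proof is correct and takes a genuinely different route from the paper's. The paper argues case by case: for $q_j=q_i$ it invokes Proposition~\ref{prop: cqprop}(ii) to split the length-$N$ DFT into equal even and odd halves; for $q_j=q_i/2$ (or $2q_i$) it uses the sign identity Proposition~\ref{prop: cqprop}(iii), $c_{q/2}(n)=(-1)^nc_q(n)$, to produce the alternating sign; and for the remaining $q_j$ it expands $c_{q_j}$ directly and argues that a certain geometric sum vanishes. Your approach instead packages the even/odd Zak coefficients into a $2\times2$ linear system with $\hat c_{q_j}(m)$ and $\hat c_{q_j}(m+d)$ on the right, so that all three cases fall out of a single question: what is the reduced denominator of $m/N$ and of $(m+d)/N$? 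This uses only Proposition~\ref{prop: cqprop}(i) and Lemma~\ref{lem:mknq}, bypassing parts (ii) and (iii) entirely, and the $(-1)^n$ factor emerges automatically from the sign flip $e^{-2\pi i(m+d)/N}=-e^{-2\pi i m/N}$ rather than from a separate identity about $c_{q/2}$. The parity bookkeeping you flag---that $d$ odd forces every even divisor of $N$ to be twice an odd number, so $(2k+q_i)/(2q_i)$ reduces cleanly to denominator $2q_i$ or $q_i/2$---is exactly the crux, and you handle it correctly. The paper's route is more hands-on and makes the role of each auxiliary identity explicit; yours is shorter and makes clearer why only three values of $q_j$ can survive.
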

			\begin{proof}
				We divide the proof into the  three cases: (i) $q_j=q_i,$  (ii)  $q_j=q_i/2 \text{ or } 2q_i,$ and (iii)  $q_j\neq q_i, q_i/2 ,2q_i.$
				
				\noindent	\textbf{Case 1: $q_j=q _i.$} 	By using Proposition \ref{prop: cqprop} (ii) and then Proposition \ref{prop: cqprop} (i), we get
				\begin{equation}\label{S4:eveneqodd}
					\sum_{\ell=0}^{d-1}c_{q_i}(2\ell)e^{-2\pi i 2\ell k/q_i}=\sum_{\ell=0}^{d-1}c_{q_i}(2\ell+1)e^{-2\pi i (2\ell +1)k/q_i}=N/2.
				\end{equation}
				Consequently, 	by using \eqref{eq:Zak}, \eqref{S4:eveneqodd}, and  $N=2d,$ we get the following equality for $n \in \{0,1\}:$
				\begin{equation*}
					\begin{aligned}
						\mc Zc_{q_i}(kN/q_i,n)&=\frac{1}{\sqrt{d}}\sum_{\ell=0}^{d-1}c_{q_i}(2\ell+n)e^{-2\pi i 2\ell k/q_i}\\
						&=\frac{e^{2\pi i kn/q_i}}{\sqrt{d}}\sum_{\ell=0}^{d-1}c_{q_i}(2\ell+n)e^{-2\pi i (2\ell+n) k/q_i}=e^{2\pi i kn/q_i}N/(2\sqrt{d})=e^{2\pi i kn/q_i}\sqrt{N/2}.
					\end{aligned}
				\end{equation*}

				%		
				%		for $n \in \{0,1\}.$ Second, if $q_i$ is odd, then the product $k N/q_i$ is even as $N/q_i$ is always even. Then by Proposition \ref{prop: cqprop} (ii), the equality 
				%		\eqref{S4:eveneqodd}  holds in this case as well and the equality $\mc Zc_{q_i}(kN/q_i,n)=\sqrt{N/2}$ holds similarly for $n \in \{0,1\}.$
				%		
				\noindent	\textbf{Case 2:} $q_j=q_i/2 \text{ or } 2q_i.$ If $q_i$ is even, then  $q_i/2$ is  a divisor of $N.$  Using Proposition \ref{prop: cqprop} (i) and (iii), we get 
				\begin{equation}\label{eq:cqi2cqi}
					\sum_{n=0}^{N-1}(-1)^nc_{q_i/2}(n)e^{-2 \pi i kn/q_i} =	\sum_{n=0}^{N-1}c_{q_i}(n)e^{-2 \pi i kn/q_i} =N.
				\end{equation}	
				Again, using  Proposition \ref{prop: cqprop} (i), we have 
				$	\sum_{n=0}^{N-1}c_{q_i/2}(n)e^{-2 \pi i kn/q_i} =0.$
				This  equality and \eqref{eq:cqi2cqi} gives
				\begin{equation}\label{eq:cqi2N2}
					\sum_{\ell=0}^{d-1}c_{q_i/2}(2\ell)e^{-2\pi i 2\ell k/q_i}=N/2 \,\,\, \text{ and } \,\,\,	\sum_{\ell=0}^{d-1}c_{q_i/2}(2\ell+1)e^{-2\pi i (2\ell +1)k/q_i}=-N/2.
				\end{equation}  
				Consequently, by using \eqref{eq:Zak}, \eqref{eq:cqi2N2}, and $N=2d,$ we get the following equality for $n \in \{0,1\}:$
				\begin{equation*}\label{eq: zcqi2}
					\mc Zc_{q_i/2}\left(\frac{kN}{q_i},n\right)=\frac{e^{2\pi i kn/q_i}}{\sqrt{d}}\sum_{\ell=0}^{d-1}c_{q_i/2}(2\ell+n)e^{-2\pi i (2\ell+n) k/q_i}=(-1)^ne^{2\pi i kn /q_i} \sqrt{N/2}.
				\end{equation*}			
				Also, if $q_i$ is odd, then $2q_i$ is a divisor of $N$ and the equality $\mc Zc_{2q_i}(kN/q_i,n)=(-1)^ne^{2\pi ikn /q_i} \sqrt{N/2}$ holds similarly for $n \in \{0,1\}.$ 
				
				\noindent
				\textbf{Case 3: $q_j\neq q_i, q_i/2 ,2q_i.$} 	 Using \eqref{eq:Zak} and  $N=2d,$ we get 
				\begin{equation*}
					\begin{aligned}
						&\mc Zc_{q_j}\left(\frac{kN}{q_i},0\right)=\frac{1}{\sqrt{d}}\sum_{\ell=0}^{d-1}c_{q_j}(2\ell)e^{-2\pi i (2 \ell)k/q_i}=\frac{1}{\sqrt{d}}\sum_{\ell=0}^{d-1}\bigg(\sum_{\substack{r=1 \\ (r,q_i)=1}}^{q_j}e^{2\pi i (2\ell)r/q_j}\bigg)e^{-2\pi i (2 \ell)k/q_i}\\
						&=\frac{1}{\sqrt{d}}\sum_{\substack{r=1 \\ (r,q_j)=1}}^{q_j}\left(\sum_{\ell=0}^{d-1}e^{2\pi i\big( \frac{r}{q_j} - \frac{k}{q_i}\big)(2\ell)}\right)= \frac{1}{\sqrt{d}}\sum_{\substack{r=1 \\ (r,q_j)=1}}^{q_j}\left(\sum_{\ell=0}^{d-1}e^{2\pi i (rt_1-kt_2)\ell/d}\right),
					\end{aligned}
				\end{equation*}
				where $N=t_1q_j=t_2q_i$ for some positive integers $t_1$ and $t_2.$		
				Note that the inner sum $\sum_{\ell=0}^{d-1}e^{2\pi i (rt_1-kt_2)\ell/d},$ is zero unless $rt_1-kt_2$ is a multiple of $d$. However, 
				$rt_1-kt_2=N\bigg(\frac{r}{q_j}-\frac{k}{q_i}\bigg)$ can never be an integer since 
				$(r,q_j)=(k,q_i)=1.$
				%	
				%	
				%	
				%	 we get
				%		$$\sum_{\ell=0}^{d-1}e^{2\pi i\big( \frac{r}{q_j} - \frac{k}{q_i}\big)(2\ell)}=\frac{1-e^{2\pi i\big( \frac{2r}{q_j} - \frac{2k}{q_i}\big)d}}{1-e^{2\pi i\big( \frac{r}{q_j} - \frac{k}{q_i}\big)}}=\frac{1-e^{2\pi i\big( \frac{rN}{q_j} - \frac{kN}{q_i}\big)}}{1-e^{2\pi i\big( \frac{r}{q_j} - \frac{k}{q_i}\big)}}=0.$$		
				This gives,
				$\mc Zc_{q_j}(kN/q_i,0)=0.$
				Similarly, we can show $\mc Zc_{q_j}(kN/q_i,1)=0.$ This completes the proof.
			\end{proof}

			We are now ready to prove Theorem \ref{thm:tight}. 
			
			\begin{proof}[Proof of Theorem \ref{thm:tight}]
				First, we prove (ii).  Note that for $m \in \mc J_d,$ $\mc U(m)$ (defined  in \eqref{eq:Uk}) is a $K \times 2$ matrix as $p=2$ in this case. 
				Then for  $m \in \mc J_d,$ we have
				\begin{equation}\label{eq:ustaru}
					\mc U^\asterisk(m)		\mc U(m)={d} \begin{pmatrix}
						\displaystyle\sum_{i=1}^K |\mc Zc_{q_i}(m,0)|^2 & \displaystyle\sum_{i=1}^K \mc Zc_{q_i}(m,0) \, \overline{\mc Zc_{q_i}(m,1)} \\
						\displaystyle\sum_{i=1}^K \overline{\mc Zc_{q_i}(m,0)} \, \mc Zc_{q_i}(m,1) & \displaystyle\sum_{i=1}^K |\mc Zc_{q_i}(m,1)|^2
					\end{pmatrix}.
				\end{equation}
				Let $m=0.$ We show that $\mc Zc_{q_i}(0,n)=0$ for $q_i \neq 1,2$ and $n \in \{0,1\}.$ Indeed, we have 
				\begin{equation*}
					\begin{aligned}
						\mc Zc_{q_i}(0,n)&=\frac{1}{\sqrt{d}}\sum_{\ell=0}^{d-1}c_{q_i}(2\ell+n)=\frac{1}{\sqrt{d}}\sum_{\ell=0}^{d-1}\bigg(\sum_{\substack{k=1 \\ (k,q_i)=1}}^{q_i}e^{2\pi i (2\ell+n)k/qi}\bigg)=\frac{1}{\sqrt{d}}\sum_{\substack{k=1 \\ (k,q_i)=1}}^{q_i}\left(\sum_{\ell=0}^{d-1}e^{2\pi ik t\ell/d}\right)e^{2\pi ikn/q_i},
					\end{aligned}
				\end{equation*}		
				where $N=tq_i$ for some positive integer $t.$
				Note that for $q_i\neq 1,2,$ the inner sum is the partial sum of a geometric series with common ratio $e^{2\pi ikt\ell/d} \neq 1$  since $kt\ell/d=kN\ell/(q_id)=2k\ell/q_i \notin \mathbb Z$  for any $k$ with $(k,q_i)=1.$ Therefore,  $$\sum_{\ell=0}^{d-1}e^{2\pi ikt\ell/d}=\frac{1-e^{2\pi ikt\ell}}{1-e^{2\pi ikt\ell/d}}=0.$$
				This implies, $\mc Zc_{q_i}(0,n)=0$ for $q_i \neq 1,2$ and $n \in \{0,1\}.$ 
				Also, it is easy to check that $\mc Zc_{1}(0,n)=\sqrt{d}$ and $\mc Zc_2(0,n)=(-1)^n\sqrt{d}$ for $n \in \{0,1\}.$ Combining everything and using \eqref{eq:ustaru}, we get
				\begin{equation}\label{eq:ustartu0}
					\mc U^\asterisk(0)		\mc U(0)={d} \begin{pmatrix}
						2d&& 0  \\
						0 && 	2d
					\end{pmatrix}
					=2d^2 I_2. \end{equation}
				Now	let $m \in \{1,\ldots,d-1\}$ be fixed. Then by Lemma \ref{lem:mknq}, there exists a divisor $q_i$ of $N$ and a number $k$ coprime to $q_i$ such that $m=kN/q_i.$ First assume that $q_i$ is even. Then by Proposition \ref{prop:zak}, we get \begin{equation}\label{eq:ustartu1122}
					\sum_{j=1}^K	|\mc Zc_{q_j}(kN/q_i,n)|^2=|\mc Zc_{q_i}(kN/q_i,n)|^2 + |\mc Zc_{q_i/2}(kN/q_i,n)|^2 =N,
				\end{equation}
				for any $n \in \{0,1\}$ and 
				\begin{equation}\label{eq:ustartu1221}
					\begin{aligned}
						\sum_{j=1}^K\mc Zc_{q_j}&(kN/q_i,0)\ol{\mc Zc_{q_j}(kN/q_i,1)} =\mc Zc_{q_i}(kN/q_i,0)\ol{\mc Zc_{q_i}(kN/q_i,1)} \\
						&+ \mc Zc_{q_i/2}(kN/q_i,0)\ol{\mc Zc_{q_i/2}(kN/q_i,1)} =N/2 \,e^{-2\pi ik /q_i} + N/2 (-1) e^{-2\pi ik/q_i} =0.
					\end{aligned}
				\end{equation}
				Then by using \eqref{eq:ustaru}, \eqref{eq:ustartu1122} and \eqref{eq:ustartu1221}, we get
				\begin{equation*}
					\mc U^\asterisk(m)		\mc U(m)=d\begin{pmatrix}
						N && 0   \\
						0 && 	N
					\end{pmatrix}=2d^2 I_2,\,\, m\in \mc J_d.		\end{equation*}
				This, along with \eqref{eq:ustartu0} shows that $	\mc U^\asterisk(m)		\mc U(m)$ has the  same eigenvalues for $m \in \{0,1,\ldots,d-1\}.$ This implies $\operatorname{rank}\, \mc U(m)=2$ for $m \in \mc J_d$ and moreover $A=B=2d^2$ where $A$ and $B$ are defined in \eqref{eq: AB}. 
				Consequently, $\mc R_{2,N}$ is a tight frame with   frame bound  $2d^2.$ 
				
				In order to
				prove the latter claim in (ii),
				we first show that $\mc Zc_{q_i}(1,1)=0$ for $i \in \mc I_K.$ To this end,  consider 
				\begin{equation}\label{eq:Zcqim1}
					\begin{aligned}
						&\mc Zc_{q_i}(1,1)=\frac{1}{\sqrt{d}}\sum_{\ell=0}^{d-1}c_{q_i}(2\ell+1)e^{-2\pi i  \ell/d}=\frac{1}{\sqrt{d}}\sum_{\ell=0}^{d-1}\bigg(\sum_{\substack{k=1 \\ (k,q_i)=1}}^{q_i}e^{2\pi i (2\ell+1)k/qi}\bigg)e^{-2\pi i  \ell/d}\\
						&=\frac{1}{\sqrt{d}}\sum_{\substack{k=1 \\ (k,q_i)=1}}^{q_i}\left(\sum_{\ell=0}^{d-1}e^{2\pi i\left( \frac{2k}{q_i} - \frac{1}{d}\right)\ell}\right)e^{2\pi ik/q_i}=\frac{1}{\sqrt{d}}\sum_{\substack{k=1 \\ (k,q_i)=1}}^{q_i}\left(\sum_{\ell=0}^{d-1}e^{2\pi i\left( rk-1\right)\ell/d}\right)e^{2\pi ik/q_i},
					\end{aligned}
				\end{equation}
				where $N=rq_i$	for some positive integer $r.$ Note that the inner sum in the last term of the above equality is zero unless for some $k$ such that $(k,q_i)=1,$ $rk-1$ is a multiple of $d.$ Thus, if $rk-1=td$ 
				for some $t \in \mathbb Z,$ then we get 
				\begin{equation}\label{eq:td+1}
					\frac{k}{q_i}=\frac{td+1}{N}.
				\end{equation}
				Since $d$ is even, thus $td+1$ is odd and hence $td+1$ and $N$ do not share any factor of $2.$ Also, if $s \neq 2$ is a  prime such that $s \mid N$ and $s \mid td+1$, then $s\mid N=2d$ implies $s\mid d.$ This gives $s \mid td.$ Consequently, we get $s\mid 1,$ which is not possible. Therefore, $td+1$ and $N$ do not share any prime factor as well. This implies $(td+1,N)=1.$ Then, the equality \eqref{eq:td+1} is possible if and only if $q_i=N$ and $k=1,d+1.$ In these cases, the common ratio $e^{2\pi i( rk-1)/d}=1$  and hence \eqref{eq:Zcqim1} becomes:
				$$\mc Zc_{N}(1,1)=\sqrt{d}\sum_{k=1,d+1}e^{2\pi ik/N}=\sqrt{d}\left(e^{2\pi i/N} + e^{2\pi i(d+1)/N} \right)=\sqrt{d}\left(e^{2\pi i/N} -e^{2\pi i/N} \right)=0.$$
				On combining everything, we get $\mc Zc_{q_i}(1,1)=0$ for $i \in \mc I_K.$ This implies that the second column of $\mc U(1)$ is zero, and hence  rank $ \mc U(1)=1 \neq 2,$ violating Lemma  \ref{lem:mainlem}.

				Now we prove (i). In this case, $p=1,$ so $\mc U(m)$
				is a $K \times 1$ matrix and thus similar to the proof of (ii), we show that  				$\mc U^\asterisk(m)\mc U(m)=AI_1 $ for $m\in \mc J_N.$ where $A=\min_{k\in \mc J_N}\lambda_{\min}[\mc U^\asterisk(m)\mc U(m)].$ Let $m \in\{1,2,\ldots,N-1\}$ be fixed. Then by Lemma \ref{lem:mknq}, we have $m=kN/q_i$ for some  $q_i|N$ and $(k,q_i)=1.$
				Consequently,  by Proposition \ref{prop: cqprop} (i), we get
				\begin{equation*}
					\begin{aligned}
						\mc U^\asterisk(m)		\mc U(m)={N}\sum_{j=1}^K	|\mc Zc_{q_i}(m,0)|^2=N\sum_{j=1}^K\left|\frac{1}{\sqrt{N}}\sum_{n=0}^{N-1}c_{q_j}(n)e^{-2\pi i kn/q_i}\right|^2=N^2.
					\end{aligned}
				\end{equation*}
				Also, for $m=0,$ we have
				\begin{equation*}
					\begin{aligned}
						\mc U^\asterisk(0)		\mc U(0)={N}\sum_{j=1}^K	|\mc Zc_{q_j}(0,0)|^2=N\sum_{j=1}^K\left|\frac{1}{\sqrt{N}}\sum_{n=0}^{N-1}c_{q_j}(n)\right|^2=N\left|\frac{1}{\sqrt{N}}\sum_{n=0}^{N-1}c_{1}(n)\right|^2=N^2,
					\end{aligned}
				\end{equation*}
		where we have used the sum property  of the Ramanujan sums.
				Combining everything, we get $A=N^2$ and therefore $\mc R_{1,N}$ is a tight frame with frame bound $N^2.$  
				
				Finally,  we prove (iii). 	
				We first prove that $\mc Zc_{q_i}(0,j) =\mc Zc_{q_i}(0,p-j)$ for $j \in \mc I_{p-1}$ and $i \in \mc I_K.$ To this end, we solve
				\begin{equation*}
					\begin{aligned}
						\mc Zc_{q_i}(0,p-j)&=\frac{1}{\sqrt{d}}\sum_{\ell=0}^{d-1}c_{q_i}(p\ell+p-j)=\frac{1}{\sqrt{d}}\sum_{\ell=0}^{d-1}c_{q_i}(p(\ell+1)-j).
					\end{aligned}
				\end{equation*}
				Substituting $t=\ell+1$ and using the fact that $c_{q_i}$ is $N$-periodic, we obtain
				\begin{equation}\label{eq:Zcqi0p-j}
					\begin{aligned}
						\mc Zc_{q_i}(0,p-j)&=\frac{1}{\sqrt{d}}\sum_{t=1}^{d}c_{q_i}(pt-j)=\frac{1}{\sqrt{d}}\sum_{t=1}^{d-1}c_{q_i}(pt-j) + \frac{1}{\sqrt{d}}c_{q_i}(N-j)\\
						&=\frac{1}{\sqrt{d}}\sum_{t=1}^{d-1}c_{q_i}(pt-j) + \frac{1}{\sqrt{d}}c_{q_i}(0-j)=\frac{1}{\sqrt{d}}\sum_{t=0}^{d-1}c_{q_i}(pt-j).
					\end{aligned}
				\end{equation}
				Expanding the quantity $\sum_{t=0}^{d-1}c_{q_i}(pt-j)$ and using $(q_i-k,q_i)=1$ for any $k$ with $(k,q_i)=1,$ we get
				\begin{equation}\label{eq:cqiptj}
					\begin{aligned}
						&\sum_{t=0}^{d-1}c_{q_i}(pt-j)=\sum_{t=0}^{d-1}\sum_{\substack{k=1 \\ (k,q_i)=1}}^{q_i}e^{2\pi i(pt-j)k/q_i}=\sum_{\substack{k=1 \\ (k,q_i)=1}}^{q_i}\bigg(\sum_{t=0}^{d-1}e^{2\pi i ptk/q_i}\bigg)	e^{-2\pi i kj/q_i}	\\
						&=	\sum_{\substack{k=1 \\ (k,q_i)=1}}^{q_i}\bigg(\sum_{t=0}^{d-1}e^{-2\pi i pt(q_i-k)/q_i}\bigg)	e^{2\pi i (q_i-k)j/q_i}	=\sum_{\substack{k=1 \\ (q_i-k,q_i)=1}}^{q_i}\left(\sum_{t=0}^{d-1}e^{-2\pi i pt(q_i-k)/q_i}\right)	e^{2\pi i (q_i-k)j/q_i}	\\
						&	=\sum_{\substack{h=1 \\ (h,q_i)=1}}^{q_i}\left(\sum_{t=0}^{d-1}e^{-2\pi i pth/q_i}\right)	e^{2\pi i hj/q_i}		=\sum_{\substack{h=1 \\ (h,q_i)=1}}^{q_i}\left(\sum_{t=0}^{d-1}e^{2\pi i pth/q_i}\right)	e^{2\pi i hj/q_i}		\\
						&=	\sum_{\substack{h=1 \\ (h,q_i)=1}}^{q_i}\left(\sum_{t=0}^{d-1}e^{2\pi i (pt+j)h/q_i}\right)	=\sum_{t=0}^{d-1}c_{q_i}(pt+j),
					\end{aligned}
				\end{equation}
				where we have used the fact that the sum $\sum_{t=0}^{d-1}e^{-2\pi i pth/q_i}=\sum_{t=0}^{d-1}e^{-2\pi i rth/d}$	is real. By using \eqref{eq:Zcqi0p-j} and \eqref{eq:cqiptj}, we get
		\begin{equation*}
			\mc Z c_{q_i}(0, p - j) 
			= \frac{1}{\sqrt{d}} \sum_{t=0}^{d-1} c_{q_i}(p t + j) 
			= \mc Z c_{q_i}(0, j), 
			\, i \in \mc I_K, \, j \in \mc I_{p-1}.
		\end{equation*}
				This shows that the $j$-th and $(p-j)$-th column of $\mc U(0)$ are equal for $j \in \mc I_{p-1}.$ Therefore, we get 
				$$ \text{rank}\,\, \mc U(0)\leq	
				\begin{cases} 
					\frac{p-1}{2}+1, & \text{ $p$ is odd,} \\
					(p/2)+1,	 & \text{$p$ is even.} 
				\end{cases}$$
				In both  cases, rank $\mc U(0) <p$ since $p \neq 1,2,$	violating Lemma  \ref{lem:mainlem}. Hence the claim follows.				
			\end{proof}
			\begin{exmp}\label{S4: exampletight6}
				Let $N=6, p=2.$ Then, in view of Theorem \ref{thm:tight}, $d=3,$ $K=4$ and  the collection $$\mc R_{2,6}=\{L_{2k}c_1\}_{k=0}^2 \cup \{L_{2k}c_2\}_{k=0}^2 \cup \{L_{2k}c_3\}_{k=0}^2\cup \{L_{2k}c_6\}_{k=0}^2,$$ forms a tight frame for $\ell^2(\mathbb Z_{6}).$						The $4 \times 2$ matrix $\mc U(m)$ for $ m \in \{0,1,2\}$ is given by:
				\begin{equation*}
					\mc U(m)=\sqrt{3}\begin{pmatrix}
						\sum_{\ell=0}^{2}c_1(2\ell)e^{2\pi i m\ell/3}&& \sum_{\ell=0}^{2}c_1(2\ell+1)e^{2\pi i m\ell/3}\\
						\sum_{\ell=0}^{2}c_2(2\ell)e^{2\pi i m\ell/3}&& 	\sum_{\ell=0}^{2}c_2(2\ell+1)e^{2\pi i m\ell/3} \\
						\sum_{\ell=0}^{2}c_3(2\ell)e^{2\pi i m\ell/3}&& 	\sum_{\ell=0}^{2}c_3(2\ell+1)e^{2\pi i m\ell/3}  \\
						\sum_{\ell=0}^{2}c_6(2\ell)e^{2\pi i m\ell/3}&& 	\sum_{\ell=0}^{2}c_6(2\ell+1)e^{2\pi i m\ell/3} 
					\end{pmatrix}.
				\end{equation*}
				After calculating these matrices, we obtain
				\[
				\mc U(0)=\begin{pmatrix}
					3 & 3 \\
					3 & -3 \\
					0&0 \\
					0& 0
				\end{pmatrix}, \quad \mc U(1)=\begin{pmatrix}
					0 & 0 \\
					0 & 0 \\
					3& -1.5 + i\, 2.5981\\
					3& 1.5 - i\, 2.5981
				\end{pmatrix}, \quad \mc U(2)=\begin{pmatrix}
					0 & 0 \\
					0 & 0 \\
					3& -1.5 - i\, 2.5981\\
					3& 1.5 + i\, 2.5981
				\end{pmatrix}.
				\]
				It can be verified that $\mc U^\asterisk(m)\mc U(m)=18I_2$ for $ m=0,1,2.$ Thus, $\mc R_{2,6}$ is a tight frame with bound 18.
			\end{exmp}
			\begin{exmp}
				Let $N=8$, $p=1$. Then, in view of Theorem \ref{thm:tight}, $d=8,$ $K=4$ and the collection $$\mc R_{1,8} = \{L_kc_1\}_{k=0}^7 \cup \{L_kc_2\}_{k=0}^7 \cup \{L_kc_4\}_{k=0}^7 \cup \{L_kc_8\}_{k=0}^7,$$ forms a tight frame for $\ell^2(\mathbb Z_8)$.
				%\begin{figure}[h]
				%	\centering
				%	\includegraphics[width=15cm]{tight_frame.jpg}
				%	\caption{Graphical Representation of Vectors Forming a Tight Frame for $\ell^2(\mathbb{Z}_6)$.}
				%	\label{fig:tight_frame}
				%\end{figure}
				The $4 \times 1$ matrix $\mc U(m)$ for $m \in \{0,1,\hdots,7\}$ is given by:
				\begin{equation*}
					\mc U(m)=\sqrt{8}\begin{pmatrix}
						\sum_{\ell=0}^{7}c_1(\ell)e^{2\pi i m\ell/8}\\
						\sum_{\ell=0}^{2}c_2(\ell)e^{2\pi i m\ell/8}\\
						\sum_{\ell=0}^{7}c_4(\ell)e^{2\pi i m\ell/8}  \\
						\sum_{\ell=0}^{7}c_8(\ell)e^{2\pi i m\ell/8} 
					\end{pmatrix}.
				\end{equation*}
				After calculating these matrices, we get
				\[
				\mc U(m)=\begin{pmatrix}
					8  \\
					0 \\
					0 \\
					0
				\end{pmatrix},\, m=0, 
				\quad
				\mc U(m)=\begin{pmatrix}
					0  \\
					8 \\
					0 \\
					0
				\end{pmatrix},\, m=4, 
				\quad
				\mc U(k)=\begin{pmatrix}
					0  \\
					0 \\
					8 \\
					0
				\end{pmatrix},\, m=\{2,6\},
				\quad
				\mc U(m)=\begin{pmatrix}
					0  \\
					0 \\
					0 \\
					8
				\end{pmatrix},\, m=\{1,3,5,7\}. 
				\]
				Therefore, $\mc U^\ast(m)\mc U(m) = 64I_1$ for $m \in \{0, 1, \dots, 7\}$. Thus, $\mc R_{1,8}$ is a tight frame with bound 64.
			\end{exmp}
			\begin{exmp}
				Let $N=12$, $p=2$. Then, in view of Theorem \ref{thm:tight}, $d=6$ and $K=6$ and the collection $$\mc R_{2,12} = \{L_{2k}c_1\}_{k=0}^5 \cup \{L_{2k}c_2\}_{k=0}^5 \cup \{L_{2k}c_3\}_{k=0}^5 \cup \{L_{2k}c_4\}_{k=0}^5 \cup \{L_{2k}c_6\}_{k=0}^5 \cup \{L_{2k}c_{12}\}_{k=0}^5,$$ does not form a  frame for $\ell^2(\mathbb Z_{12})$.
				The $6 \times 2$ matrix $\mc U(m),\, m\in \{0,1, \hdots, 5\}$ is given by:
				\begin{equation*}
					\mc U(m)=\sqrt{5}\begin{pmatrix}
						\sum_{\ell=0}^{5}c_1(2\ell)e^{2\pi i m\ell/12} && \sum_{\ell=0}^{5}c_1(2\ell+1)e^{2\pi i m\ell/12}\\
						\sum_{\ell=0}^{5}c_2(2\ell)e^{2\pi i m\ell/12} && \sum_{\ell=0}^{5}c_2(2\ell+1)e^{2\pi i m\ell/12}\\
						\sum_{\ell=0}^{5}c_3(2\ell)e^{2\pi i m\ell/12}  && \sum_{\ell=0}^{5}c_3(2\ell+1)e^{2\pi i m\ell/12}\\
						\sum_{\ell=0}^{5}c_4(2\ell)e^{2\pi i m\ell/12} && 	\sum_{\ell=0}^{5}c_4(2\ell+1)e^{2\pi i m\ell/12} \\
						\sum_{\ell=0}^{5}c_6(2\ell)e^{2\pi i m\ell/12} && 	\sum_{\ell=0}^{5}c_6(2\ell+1)e^{2\pi i m\ell/12} \\
						\sum_{\ell=0}^{5}c_{12}(2\ell)e^{2\pi i m\ell/12} && 	\sum_{\ell=0}^{5}c_{12}(2\ell+1)e^{2\pi i m\ell/12} 
					\end{pmatrix}.
				\end{equation*}
		It can be verified that rank $\mc U(m)=1 \neq 2$ for $m \in \{1,3,5\}.$ Then, by Lemma \ref{lem:mainlem}, the collection $\mc R_{2,12}$ is not a  frame for $\ell^2(\mathbb Z_{12}).$ 
			\end{exmp}

			In our analysis, we demonstrated that \( \mathcal{R}_{p,N} \) for \( p > 2 \) does not span the full space \( \ell^2(\mathbb{Z}_N) \), as it does not form a frame for \( \ell^2(\mathbb{Z}_N) \). This is because, for \( p > 2 \), certain individual collections \( \{L_{pk}c_{q_i}\}_{k \in \mc J_d} \) within \( \mathcal{R}_{p,N} \) fail to span the specific subspaces of \( \ell^2(\mathbb{Z}_N) \) to which they are associated. In the following section, we show that by implementing a non-uniform filter bank structure, where decimation ratios in each channel are chosen appropriately according to the properties of the Ramanujan sums and Ramanujan subspaces, the collection \( \mathcal{R}_{p,N} \) can indeed be modified to form a frame for \( \ell^2(\mathbb{Z}_N) \).
			\subsection{Non-uniform filter banks with Ramanujan sums}\label{sub:3nonuniform}

			In this subsection, we prove that by updating the decimation ratios for appropriately selected channels in the uniform filter bank based on Ramanujan sums \(c_{q_1}, c_{q_2}, \dots, c_{q_K}\), originally designed with a fixed decimation ratio \(p > 2\), the resulting non-uniform filter bank configuration exhibits frame properties.  Non-uniform filter banks are signal processing frameworks that allow variable decimation rates across channels, unlike traditional filter banks which use a uniform rate. See, \cite{hoang1989non, li1997simple} for more details.

		  To this end, we first observe that any two individual collections, \( \{L_{pk}c_{q_i}\}_{k=0}^{d-1} \) and \( \{L_{pk}c_{q_j}\}_{k=0}^{d-1} \) for \( j \neq i \), within \( \mathcal{R}_{p,N} \)  lie in distinct, orthogonal subspaces of \( \ell^2(\mathbb{Z}_N) \), as shown in Theorem \ref{thm:basis}.
			The introduction of a non-uniform filter bank is necessitated by the fact that some individual collections within \( \mathcal{R}_{p,N},  \) for $p>2,$ fail to span their respective subspaces, thereby preventing \( \mathcal{R}_{p,N} \) for $p>2$ from forming  frame for \( \ell^2(\mathbb{Z}_N) \).
			By carefully adjusting the decimation ratios for channels corresponding to these non-spanning collections and utilizing the orthogonal decomposition of \( \ell^2(\mathbb{Z}_N) \), we demonstrate that this modified form of \( \mathcal{R}_{p,N} \), arising from the resulting non-uniform filter bank with $p>2$, forms a frame for \( \ell^2(\mathbb{Z}_N) \) (Theorem \ref{thm: rpkframe}).
			
For our further analysis, we first discuss the spanning properties of the Ramanujan subspace $S_q$.	The authors in \cite[Theorem 4]{vaidyanathan2014ramanujan1}  proved that any $\phi(q)$ consecutive shifts of the Ramanujan sum $c_q$ are linearly independent. In particular, 
 the collection $
\{c_q, L_1c_q, \ldots, L_{\phi(q)-1}c_q\}$ is a basis for 
 the Ramanujan subspace $S_q=\operatorname{span} \left\{ L_{k} c_{q} : 0 \leq k \leq \phi(q) - 1 \right\}$.

Since $c_{q_i} \in \ell^2(\mathbb Z_N)$ for $i \in \mc I_K$, the space $S_{q_i}$ is a $\phi(q_i)$-dimensional subspace of $\ell^2(\mathbb Z_N)$.	Also observe that the subspaces $S_{q_i}$ and $S_{q_j}$ are orthogonal for $q_i, q_j |N$ and   $q_i \neq q_j$ due to the orthogonality property of the Ramanujan sums (see, Proposition \ref{prop:properties}(iv)).
Now by using Gauss's theorem on sums of  Euler's totient functions, we have dim $\ell^2(\mathbb Z_N)=N=\sum_{i=1}^{K}\phi(q_i)=\text{dim } S_{q_1} + \text{dim } S_{q_2}+ \cdots + \text{dim } S_{q_K}$. This gives the orthogonal decomposition   $$\ell^2(\mathbb Z_N)=S_{q_1} \oplus S_{q_2} \oplus \cdots \oplus S_{q_K}.$$
As a consequence, any \( x \in \ell^2(\mathbb{Z}_N) \) admits the representation
\begin{equation}\label{eq:rptrep}
	x = \sum_{i=1}^K \sum_{\ell=0}^{\phi(q_i)-1} \alpha_{i,\ell} \, c_{q_i}(\cdot - \ell),
\end{equation}
for suitable coefficients $\alpha = \{ \alpha_{i,\ell} :  0 \leq \ell < \phi(q_i)-1, i \in \mc I_K\}$. This expansion serves as the foundation of the Ramanujan Periodic Transform (RPT) proposed in \cite{vaidyanathan2014ramanujan2}. The RPT is well-suited for extracting the periodic structure of finite-length signals and identifying hidden periods. It is also useful for denoising periodic signals by utilizing their underlying periodic information captured in \eqref{eq:rptrep}.

The next result establishes that the space $\ell^2(\mathbb{Z}_N)$ admits an orthogonal decomposition into Ramanujan subspaces generated by even consecutive shifts of Ramanujan sums corresponding to the divisors of \(N\). To this end, for each pair of divisors \(p\) and \(q_i\) of \(N\), where \(i \in \mathcal{I}_K\), we define the Ramanujan subspace
\begin{equation}\label{eq:ramanujansubspaces}
	S_{p,q_i} := \operatorname{span} \left\{ L_{pk} c_{q_i} : 0 \leq k \leq \phi(q_i) - 1 \right\}.
\end{equation}
Note that for $p=1$, $S_{1,q_i}=S_{q_i}$ for $i \in \mc I_K$. For \(p = 2\), the following result holds.
%The authors in \cite[Theorem 4]{vaidyanathan2014ramanujan1} also proved that any $\phi(q)$ consecutive shifts of the Ramanujan sum $c_q$ is linearly independent. This implies that  the collection $\{L_{k}c_{q_i}\}_{k=\ell}^{\ell+\phi(q_i)-1}$ is a basis for $S_{q_i}$ for any $\ell \in \mathbb Z$ and $i \in \mc I_K$. In our next result, we prove that any even $\phi(q_i)$ consecutive shifts $\{L_{2k}c_{q_i}\}_{k=\ell}^{\ell+\phi(q_i)-1}$  of the Ramanujan sum $c_{q_i}$   also generates a $\phi(q_i)$-dimensional subspace of $\ell^2(\mathbb Z_N)$ for a special choice of $N$, providing another orthogonal decomposition of $\ell^2(\mathbb Z_N)$.

%			\textcolor{blue}{The next result} investigates the basis properties of collections formed by the first \( \phi(q_i) \) \( p \)-shifts of the Ramanujan sums associated with the divisors of \( N \), for specific choices of \( p \) and \( N \). It also demonstrates that these bases yield an orthogonal decomposition of \( \ell^2(\mathbb{Z}_N) \) in terms of Ramanujan subspaces. Let \(S_{p,q_i}\) denote the Ramanujan subspace corresponding to the divisors $p$ and  $q_i$ of \(N\), for \(i \in \mathcal{I}_K\),
	\begin{thm}\label{thm:basis} 
	Let 	$p_i,\, i \in \mc I_n$, be primes and $\alpha_i,\, i \in \mc I_n$, be positive integers such that $N=2p_1^{\alpha_1}p_2^{\alpha_2}\hdots p_n^{\alpha_n}, p_i \neq 2$.  Let \(S_{2,q_i}\) denote the Ramanujan subspace corresponding to the divisor $q_i$ of \(N\), for \(i \in \mathcal{I}_K\), as defined in \eqref{eq:ramanujansubspaces}. Then, for any $\ell \in \mathbb Z$ and $i \in \mc I_K$, the collection  $\{L_{2k}c_{q_i}\}_{k=\ell}^{\ell+\phi(q_i)-1}$ is a basis of $S_{2,q_i}$ and thus  dim $S_{2,q_i}=\phi(q_i)$. Moreover, the system 
	$$\mc B_{N}:=\{L_{2k}c_{q_1}\}_{k=0}^{\phi(q_1)-1} \cup \{L_{2k}c_{q_2}\}_{k=0}^{\phi(q_2)-1} \cup \{L_{2k}c_{q_3}\}_{k=0}^{\phi({q_3})-1} \cup \hdots \cup \{L_{2k}c_{{q_K}}\}_{k=0}^{\phi({q_K})-1}$$
	forms  a  basis for $\ell^2(\mathbb Z_N)$   and the orthogonal decomposition of  $\ell^2(\mathbb Z_N)$ is given by:	$$\ell^2(\mathbb Z_N)=S_{2,q_1} \oplus S_{2,q_2}  \oplus \cdots \oplus S_{2,q_K}.$$
		\end{thm}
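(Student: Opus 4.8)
The plan is to pass to the discrete Fourier (DFT) side, where the structure of the Ramanujan sums is transparent, and reduce each assertion to a Vandermonde nonsingularity statement. First I would record, from Proposition~\ref{prop: cqprop}(i), that the (unnormalized) DFT $\widehat{c}_{q_i}(m)=\sum_{n=0}^{N-1}c_{q_i}(n)e^{-2\pi i mn/N}$ equals $N$ exactly on the frequency set $F_i:=\{\,rN/q_i : 1\le r\le q_i,\ (r,q_i)=1\,\}$ and vanishes off $F_i$; note $|F_i|=\phi(q_i)$. Since circular translation acts by modulation, $\widehat{L_{2k}c_{q_i}}(m)=e^{-4\pi i mk/N}\,\widehat{c}_{q_i}(m)$, so every even shift $L_{2k}c_{q_i}$ (for any $k\in\mathbb{Z}$) has DFT supported in $F_i$. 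Consequently all even shifts lie in the space $V_i$ of signals whose DFT is supported on $F_i$, and $\dim V_i=\phi(q_i)$.

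The heart of the argument is the linear independence of any $\phi(q_i)$ consecutive even shifts. I would take a relation $\sum_{k=\ell}^{\ell+\phi(q_i)-1}\beta_k L_{2k}c_{q_i}=0$, apply the DFT, and evaluate at each frequency $m_r=rN/q_i\in F_i$. Dividing by the common nonzero value $N$ yields the homogeneous system $\sum_{k=\ell}^{\ell+\phi(q_i)-1}\beta_k z_r^{\,k}=0$ for all $r$ with $(r,q_i)=1$, where $z_r:=e^{-4\pi i r/q_i}$. Factoring out $z_r^{\ell}$ leaves a square Vandermonde system in the $\phi(q_i)$ nodes $\{z_r\}$, so it suffices to prove these nodes are pairwise distinct.

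The distinctness of the nodes is the main obstacle, and the only place the hypothesis $N=2p_1^{\alpha_1}\cdots p_n^{\alpha_n}$ (equivalently $d$ odd) enters. Here $z_r=z_{r'}$ means $2r\equiv 2r'\pmod{q_i}$. If $q_i$ is odd, $2$ is invertible modulo $q_i$, so $r\equiv r'$ at once. If $q_i$ is even, write $q_i=2m$ with $m$ odd (possible precisely because $N$ carries a single factor of $2$); then $2r\equiv 2r'\pmod{2m}$ gives $r\equiv r'\pmod m$, and I would observe that reduction $r\mapsto r\bmod m$ is a bijection from $\{\,r:1\le r\le 2m,\ (r,2m)=1\,\}$ onto $\{\,s:1\le s\le m,\ (s,m)=1\,\}$: both sets have $\phi(2m)=\phi(m)$ elements, and each admissible residue $s$ has exactly one odd (hence coprime to $2m$) lift among $s,s+m$ since $m$ is odd. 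Thus $r=r'$ in either case, the Vandermonde matrix is invertible, and all $\beta_k$ vanish. Combined with the first paragraph this gives $\operatorname{span}\{L_{2k}c_{q_i}:k\in\mathbb{Z}\}=V_i$, so $S_{2,q_i}=V_i$ has dimension $\phi(q_i)$ and any $\phi(q_i)$ consecutive even shifts form a basis, proving the first assertion.

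For the decomposition I would note that the full Ramanujan subspace $S_{q_i}$ is also exactly $V_i$: all shifts of $c_{q_i}$ have DFT supported on $F_i$, and $\dim S_{q_i}=\phi(q_i)$ by the Vaidyanathan basis result, forcing $S_{q_i}\subseteq V_i$ with equal dimensions. Hence $S_{2,q_i}=S_{q_i}$. The sets $F_i$ are pairwise disjoint for distinct divisors, since $rN/q_i=r'N/q_j$ under the coprimality constraints forces $q_i=q_j$ by the uniqueness in Lemma~\ref{lem:mknq}; by Plancherel the subspaces $V_i=S_{2,q_i}$ are therefore mutually orthogonal. As $\sum_{i=1}^K\phi(q_i)=N$ by Gauss's theorem, they exhaust the space, giving $\ell^2(\mathbb{Z}_N)=S_{2,q_1}\oplus\cdots\oplus S_{2,q_K}$. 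Finally, $\mathcal{B}_N$ is the union of bases of these mutually orthogonal summands and has $\sum_i\phi(q_i)=N=\dim\ell^2(\mathbb{Z}_N)$ elements, so it is a basis of $\ell^2(\mathbb{Z}_N)$.
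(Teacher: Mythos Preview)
Your proof is correct. At its core it coincides with the paper's argument: both reduce the linear independence of $\phi(q_i)$ consecutive even shifts of $c_{q_i}$ to showing that the Vandermonde nodes $e^{-4\pi i r/q_i}$, as $r$ ranges over residues coprime to $q_i$, are pairwise distinct, and both handle the even-$q_i$ case by exploiting that $N$ carries only a single factor of~$2$.

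The difference lies in the framing. The paper works in the time domain: it writes the matrix of even shifts as a block of products $U_{q_i}V_{q_i}^{\ell}$, where $U_{q_i}$ is a row-Vandermonde and $V_{q_i}^{\ell}$ a column-Vandermonde, and verifies each factor has full rank. You instead pass to the DFT side via Proposition~\ref{prop: cqprop}(i), which localizes every shift of $c_{q_i}$ to the frequency set $F_i$ and collapses the problem to a single Vandermonde system. Your route buys two things: it makes the identification $S_{2,q_i}=V_i=S_{q_i}$ explicit (so the even-shift Ramanujan subspace is literally the ordinary one), and it yields the orthogonality of the $S_{2,q_i}$ directly from the disjointness of the $F_i$ rather than by appealing to the $p=1$ case as the paper does. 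The paper's approach, on the other hand, is self-contained and does not rely on the DFT computation of Proposition~\ref{prop: cqprop}(i) as an input.
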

	\begin{remar}
		The collection \( \mc B_N \)  is written as a union of shifted Ramanujan sums for notational convenience. However, we implicitly view each vector \( L_{2k}c_{q_i} \) as a block vector of the form \( (0, \ldots, L_{2k}c_{q_i}, \ldots, 0) \), where the nonzero component appears in the \( i \)th summand of the orthogonal decomposition
		\[
		\ell^2(\mathbb{Z}_N) = S_{2,q_1} \oplus S_{2,q_2} \oplus \cdots \oplus S_{2,q_K}.
		\]
	From now on, all such unions in an orthogonal decomposition of a Hilbert space will be implicitly interpreted as block-wise unions within the corresponding direct sum decomposition.
	\end{remar}
	The following lemma, concerning frames and bases in orthogonal subspaces, will be used in the sequel. While the idea that the union of frames (or bases) in mutually orthogonal subspaces yields a frame (or basis) for the direct sum is discussed in \cite[Section~5.1]{Waldrontight2018} and \cite[Chapter~1, page~12]{han2000frames}, we include a precise formulation here for completeness and omit the proof for brevity.
		\begin{lem}\label{lem:frameunion}
			Let \(\{\mathcal{H}_i\}_{i \in \mathcal{I}_J}\) be mutually orthogonal subspaces of a finite-dimensional Hilbert space \(\mathcal{K}\), i.e., \(\langle x, y \rangle_{\mathcal{K}} = 0\) for all \(x \in \mathcal{H}_i\), \(y \in \mathcal{H}_j\) with \(i \neq j\). Assume that \(\mathcal{K} = \bigoplus_{i \in \mathcal{I}_J} \mathcal{H}_i\), and for each \(i \in \mathcal{I}_J\), let \(\mathcal{D}_i\) be a finite collection of vectors in \(\mathcal{H}_i\). 
			
			Then, the union \(\mathcal{D} := \bigcup_{i \in \mathcal{I}_J} \mathcal{D}_i\) forms a frame (respectively, a basis) for \(\mathcal{K}\) if and only if each \(\mathcal{D}_i\) is a frame (respectively, a basis) for \(\mathcal{H}_i\). Moreover, \(\mathcal{D}\) is a tight frame for \(\mathcal{K}\) if and only if each \(\mathcal{D}_i\) is a tight frame for \(\mathcal{H}_i\) with the same frame bound.
		\end{lem}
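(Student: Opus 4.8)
The plan is to reduce every assertion to two elementary consequences of the orthogonal decomposition $\mathcal{K} = \bigoplus_{i \in \mathcal{I}_J} \mathcal{H}_i$. Writing $P_i$ for the orthogonal projection of $\mathcal{K}$ onto $\mathcal{H}_i$, each $x \in \mathcal{K}$ decomposes uniquely as $x = \sum_{i \in \mathcal{I}_J} x_i$ with $x_i = P_i x \in \mathcal{H}_i$, and the Pythagorean identity $\|x\|^2 = \sum_{i \in \mathcal{I}_J} \|x_i\|^2$ holds. The second observation is that cross terms vanish: for $f \in \mathcal{D}_j \subseteq \mathcal{H}_j$ we have $\langle x_i, f \rangle = 0$ whenever $i \neq j$ by orthogonality, so $\langle x, f \rangle = \langle x_j, f \rangle$. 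Consequently the analysis sum splits as
$$\sum_{f \in \mathcal{D}} |\langle x, f \rangle|^2 = \sum_{j \in \mathcal{I}_J} \sum_{f \in \mathcal{D}_j} |\langle x_j, f \rangle|^2,$$
and this identity is the workhorse for the frame and tight-frame equivalences.

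For the frame equivalence I would argue both directions from this splitting. If each $\mathcal{D}_i$ is a frame for $\mathcal{H}_i$ with bounds $A_i \le B_i$, then bounding the $i$-th inner sum below by $A_i \|x_i\|^2$ and above by $B_i \|x_i\|^2$, summing over the finite index set, and invoking Pythagoras yields $(\min_i A_i)\|x\|^2 \le \sum_{f \in \mathcal{D}} |\langle x, f\rangle|^2 \le (\max_i B_i)\|x\|^2$, so $\mathcal{D}$ is a frame for $\mathcal{K}$. Conversely, if $\mathcal{D}$ is a frame with bounds $A, B$, testing the frame inequality on an arbitrary $x \in \mathcal{H}_j$ (so that $x_i = 0$ for $i \neq j$) collapses the splitting to $\sum_{f \in \mathcal{D}_j} |\langle x, f\rangle|^2$ and forces $A\|x\|^2 \le \sum_{f\in\mathcal{D}_j}|\langle x, f\rangle|^2 \le B\|x\|^2$, the frame property of $\mathcal{D}_j$ on $\mathcal{H}_j$. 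The tight-frame claim is the special case $A_i = B_i = A$ for all $i$: in the forward direction the splitting collapses to $A\sum_i \|x_i\|^2 = A\|x\|^2$, and the converse again tests on $x \in \mathcal{H}_j$ to force each $\mathcal{D}_j$ to be tight with the common bound $A$.

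For the basis equivalence I would switch from the analysis sum to a dimension-counting and projection argument. If each $\mathcal{D}_i$ is a basis of $\mathcal{H}_i$, then $|\mathcal{D}| = \sum_i |\mathcal{D}_i| = \sum_i \dim \mathcal{H}_i = \dim \mathcal{K}$, and since any $x = \sum_i x_i$ with each $x_i \in \operatorname{span} \mathcal{D}_i$ lies in $\operatorname{span}\mathcal{D}$, the set $\mathcal{D}$ spans $\mathcal{K}$; a spanning set of cardinality $\dim \mathcal{K}$ in a finite-dimensional space is automatically a basis. Conversely, if $\mathcal{D}$ is a basis, then each subset $\mathcal{D}_j$ inherits linear independence, and applying $P_j$ to an expansion $x = \sum_{f \in \mathcal{D}} c_f f$ of an arbitrary $x \in \mathcal{H}_j$ annihilates every term coming from $\mathcal{D}_i$ with $i \neq j$ (there $P_j f = 0$) and fixes those from $\mathcal{D}_j$, showing $\mathcal{D}_j$ spans $\mathcal{H}_j$; hence $\mathcal{D}_j$ is a basis.

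The only genuine subtlety, and the one place the argument is not a one-line computation, is the forward implication of the basis case, where one must verify that $\mathcal{D}_j$ spans all of $\mathcal{H}_j$ rather than merely being independent; this is exactly where the projection $P_j$, and not the analysis-sum splitting, is needed. Everything else is bookkeeping built on the Pythagorean identity and the vanishing of cross terms.
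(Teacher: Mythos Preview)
The paper does not actually prove this lemma: it states the result, cites \cite{Waldrontight2018} and \cite{han2000frames} for the underlying idea, and explicitly says ``we include a precise formulation here for completeness and omit the proof for brevity.'' So there is nothing in the paper to compare against beyond the references.

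Your argument is correct and is the standard one: decompose $x=\sum_i P_i x$, use orthogonality to kill cross terms so that the analysis sum splits as $\sum_{f\in\mathcal D}|\langle x,f\rangle|^2=\sum_j\sum_{f\in\mathcal D_j}|\langle P_j x,f\rangle|^2$, and then combine with Pythagoras for the frame and tight-frame claims; for the basis claim, use dimension counting in one direction and projection in the other. One small slip: in your final paragraph you refer to ``the forward implication of the basis case'' as the place where one must show $\mathcal D_j$ spans $\mathcal H_j$, but in your own write-up that step occurs in the \emph{converse} direction (starting from $\mathcal D$ a basis for $\mathcal K$). The mathematics is fine; only the label is off. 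It is also worth noting, in light of the paper's Remark following Theorem~\ref{thm:basis}, that the union is to be read block-wise (as a disjoint union indexed by $i$), which is what makes your cardinality count $|\mathcal D|=\sum_i|\mathcal D_i|$ legitimate.
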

		We now prove Theorem \ref{thm:basis}. The arguments of its proof are adopted from \cite[Theorem 4]{vaidyanathan2014ramanujan1}.
		\begin{proof}[Proof of Theorem \ref{thm:basis}.]
Let \( q_i \), for \( i \in \mathcal{I}_K \), be fixed. We begin by proving the basis part of  \( S_{2,q_i} \). Let \( M_{q_i}^{\ell} \) denote the matrix formed by the elements of the collection \( \{L_{2k} c_{q_i}\}_{k=\ell}^{\ell + \phi(q_i) - 1} \), that is, $M_{q_i}^{\ell}=\begin{bmatrix}
			L_{2\ell} c_{q_i} & L_{2(\ell+1)}c_{q_i} & \hdots & L_{2(\ell+\phi(q_i)-1)}c_{q_i}
		\end{bmatrix}.$ Then $M_{q_i}^{\ell}$ has the form:
		\[
			M_{q_i}^{\ell} = \left[
			\begin{array}{c}
				U_{q_i}  V_{q_i}^{\ell} \\[2pt]
				\hline 
				U_{q_i}  V_{q_i}^{\ell}\\[2pt]					\hline 
				\vdots\\[2pt]
				\hline
				U_{q_i}  V_{q_i}^{\ell}
			\end{array}
			\right]_{N \times \phi(q_i)}\quad (N/q_i\, \text{times}),
			\] 
		where 
		\begin{equation}\label{eq:Uqi}
			U_{q_i}=\begin{bmatrix}
				1 && 1 && \hdots && 1\\
				e^{2\pi i /q_i}	&& e^{2\pi i k_2/q_i}	&& \hdots && e^{2\pi i k_{\phi(q_i)}/q_i}	\\
				e^{2\pi i 2k_1/q_i}	&& e^{2\pi i 2k_2/q_i}	&& \hdots && e^{2\pi i 2k_{\phi(q_i)}/q_i}	\\
				\vdots && \vdots && \ddots&& \vdots\\
				e^{2\pi i (q_i-1)k_1/q_i}	&& e^{2\pi i (q_i-1)k_2/q_i}	&& \hdots && e^{2\pi i (q_i-1)k_{\phi(q_i)}/q_i}	
			\end{bmatrix}_{q_i \times \phi(q_i)}
		\end{equation}
		and 
		\begin{equation*}\label{eq:Vqi}
			V_{q_i}^{\ell}=\begin{bmatrix}
				e^{-2\pi i 2k_1\ell/q_i} && e^{-2\pi i 2k_1(\ell+1)/q_i}&&\cdots && e^{-2\pi i 2k_1(\ell+\phi(q_i)-1)/q_i}\\
				e^{-2\pi i 2 k_2\ell/q_i} && e^{-2\pi i 2k_2(\ell+1)/q_i}&&\cdots && e^{-2\pi i 2k_2(\ell+\phi(q_i)-1)/q_i}\\
				\vdots && \vdots && \ddots&& \vdots\\
				e^{-2\pi i 2 k_{\phi(q_i)}\ell/q_i} && e^{-2\pi i 2k_{\phi(q_i)}(\ell+1)/q_i}&&\cdots && e^{-2\pi i 2k_{\phi(q_i)}(\ell+\phi(q_i)-1)/q_i}\\
			\end{bmatrix}_{\phi(q_i)\times \phi(q_i)},
		\end{equation*}
	where \( k_j \in \mathbb{Z}_{q_i} \), with \( 1 \leq j \leq \phi(q_i) \), are such that \((k_j, q_i) = 1 \) for each \( i \in \mathcal{I}_K \).	It can be observed that $U_{q_i}$ is a row-Vandermonde matrix with distinct elements in its second row. Indeed, if $e^{-2\pi i \ell_1/q_i}=e^{-2\pi i \ell_2/q_i}$ for $1 \leq \ell_1<\ell_2 < \phi(q_i),$ then $e^{-2\pi i (\ell_1-\ell_2)/q_i}=1.$ This gives $\ell_1-\ell_2=mq_i$ for some integer $m.$ Since $|\ell_1-\ell_2| < \phi(q_i),$  the last equality is possible only if $m=0,$ which contradicts $\ell_1 \neq \ell_2.$ Therefore, each element in the second row of $U_{q_i}$ is distinct, and hence $U_{q_i}$ has rank $\phi(q_i).$	Also notice that $V_{q_i}^{\ell}$ can be written in the form: $V_{q_i}^{\ell}=DV_{q_i}^{0},$ where $D$ is a diagonal matrix with the  first column of $V_{q_i}^{\ell}$ as its diagonal entries.
		%			and $V_{q_i}^{0}$ is $\phi(q_i) \times \phi(q_i)$ matrix whose $j$-th row is:
		%			$$\begin{bmatrix}
			%			1 && e^{-2\pi i pk_{j}/q_i} &&  e^{-2\pi i 2pk_{j}/q_i}&& \hdots &&  e^{-2\pi i (\phi(q_i)-1)pk_{j}/q_i},
			%			\end{bmatrix}$$
		%	where $(k_j,q_i)=1$	for $1 \leq j \leq \phi(q_i)-1.$	
		We now prove that the column-Vandermonde matrix $V_{q_i}^{0}$ has distinct elements in its second column.  Indeed, if $e^{-2\pi i 2k_j/q_i}=e^{-2\pi i 2k_m/q_i}$ for $1 \leq k_j<k_m< q_i,$ then $e^{-2\pi i 2(k_j-k_m)/q_i}=1.$ This gives $2(k_j-k_m)=nq_i$ for some $n \in \mathbb Z.$
		
	If $q_i=2,$ then the equality $2(k_j-k_m)=nq_i$ gives $(k_j-k_m)=n,$ which is possible only when $n=0$ since $|k_j-k_m| < q_i.$ If $q_i \neq 2,$ then since $N=2p_1^{\alpha_1}p_2^{\alpha_2}\cdots p_n^{\alpha_n}, p_i \neq 2$  implies that $q_i$ is odd. Consequently, $k_j-k_m=nq_i/2$ with $(q_i,2)=1.$ Again, this is possible only when $n=0.$ Thus, both the cases leads to the equality $k_j=k_m$, which is a contradiction.
		Therefore each element in the second column of $V_{q_i}^{0}$  is distinct and hence $V_{q_i}^{0}$ has rank $\phi(q_i).$  Since $D$ is a diagonal matrix with  non-zero elements on the diagonal,  $V_{q_i}^{\ell}$ has rank $\phi(q_i).$ Consequently, the product $U_{q_i}V_{q_i}^{\ell}$ has rank $\phi(q_i).$ Since each block in $M_{q_i}^{\ell}$  is a duplicate of the first block, increasing blocks does not increase the number of linearly independent rows in the matrix. Thus, the total number of linearly independent rows in  $M_{q_i}^{\ell}$ remains $\phi(q_i)$, and consequently, $M_{q_i}^{\ell}$ has rank $\phi(q_i).$  In particular, for $\ell=0$, the collection $\{L_{2k}c_{q_i}\}_{k=0}^{\phi(q_i)-1}$ is  linearly independent and hence  $\dim S_{2,q_i}=\phi(q_i).$ Since shifts are interpreted modulo $q_i,$  the space $S_{2,q_i}$ is invariant under shifts and hence $\{L_{2k}c_{q_i}\}_{k=\ell}^{\ell+\phi(q_i)-1} \subset S_{2,q_i}.$ Consequently, 
		$\{L_{2k}c_{q_i}\}_{k=\ell}^{\ell+\phi(q_i)-1}$ is a basis for $S_{2,q_i}$ for any $q_i|N$ and $\ell \in \mathbb Z.$

We now turn to the latter claim. The fact that $\mathcal{B}_N$ forms a basis follows directly from the previous discussion together with Lemma~\ref{lem:frameunion}. The orthogonal decomposition  follows by arguments analogous to those used for the case \(p = 1\). This completes the proof.
%Observe that, the subspaces $S_{2,q_i}$ and $S_{2,q_j}$ are orthogonal for $q_i, q_j |N$ and   $q_i \neq q_j.$ 	 Indeed, by using the fact that $c_{q_i}$ and $c_{q_j}$ are $N$-periodic, we have the following equation for $s_1 =\sum_{k=0}^{\phi(q_i)-1}\alpha_{i,k}L_{2k}c_{q_i}\in S_{2,q_i}$ and $s_2=\sum_{\ell=0}^{\phi(q_j)-1}\alpha_{i,k}L_{2\ell}c_{q_j}\in S_{2,q_j}:$ 
%	\begin{equation*}
%		\begin{aligned}
%			\langle s_1,s_2\rangle_{\ell^2(\mathbb Z_N)}&=\sum_{n=0}^{N-1}x(n)\ol{y(n)}= \sum_{k=0}^{\phi(q_i)-1}\sum_{\ell=0}^{\phi(q_j)-1}\alpha_{i,k}\ol{\beta_{j,\ell}}\sum_{n=0}^{N-1}c_{q_i}(n-2k)\ol{c_{q_j}(n-2\ell)}=0,
%		\end{aligned}
%	\end{equation*}
%	where the last equation follows  from the orthogonality property of the Ramanujan sums (see, Proposition \ref{prop:properties}(iv)).  Therefore $S_{2,q_i}$ and $S_{2,q_j}$ are orthogonal for $q_i, q_j |N$ and   $q_i \neq q_j.$  Now by using previous claim and Gauss's theorem on sums of  Euler's totient functions, we have dim $\ell^2(\mathbb Z_N)=N=\sum_{i=1}^{K}\phi(q_i)=\text{dim } S_{2,q_1} + \text{dim } S_{2,q_2}+ \cdots + \text{dim } S_{2,q_K}$. This proves the orthogonal decomposition  part. Then,	the basis part of  $\mc B_N$ follows using the previous claim and   Lemma \ref{lem:frameunion}. 
	\end{proof}

			\begin{remar}
				Whenever $N=2^m$ for some $m,$ it can be shown that the collection $\mc B_{1,N}$ forms  an orthogonal basis for $\ell^2(\mathbb Z_N)$, and the collection
				\[
				\left\{ \frac{1}{\sqrt{N\phi(1)}} \, L_k c_{2^0} \right\}_{k=0}^{\phi(1)-1}
				\cup 
				\left\{ \frac{1}{\sqrt{N\phi(2)}} \, L_k c_{2^1} \right\}_{k=0}^{\phi(2)-1}
				\cup 
				\cdots
				\cup 
				\left\{ \frac{1}{\sqrt{N\phi(2^m)}} \, L_k c_{2^m} \right\}_{k=0}^{\phi(2^m)-1}
				\]
				 forms  an orthonormal basis for $\ell^2(\mathbb Z_N).$ Given the orthogonal decomposition of $\ell^2(\mathbb Z_N),$ it is enough to show that the collection $\{L_{k}c_{2^j}\}_{k=0}^{\phi(2^j)-1}$ for $0 \leq \ell \leq m$  forms an orthogonal basis of $S_{1,2^j}.$ The proof of this fact can be found in \cite[Lemma 2]{vaidyanathan2014ramanujan1} and orthonormality follows from the norm equality: $	\|c_{2^\ell}\|^2_{\ell^2(\mathbb Z_N)}=\sum_{n=0}^{N-1}c_{2^\ell}^2(n)=\frac{N}{2^\ell}2^\ell\phi(2^\ell)=N \phi(2^\ell),\,\, 0 \leq \ell \leq m.$
			\end{remar}		
			%	We now move our discussion to non-uniform filter banks.	The following result shows that for fixed a prime number  $p>2,$ the non-uniform filter bank based on Ramanujan sum $c_{q_i}, i \in \mc I_K$ and with decimation ratio \( p \) for Ramanujan sum \( c_{q_i} \) if $p \nmid q_i$ or $p>q_i$ and decimation ratio $1$ if $p \mid q_i$ and $p \leq q_i$ results in exhibiting frame properties.
			%	
			We now move our discussion to non-uniform filter banks. The following result demonstrates that for a  prime divisor \(p>2\) of $N$, a non-uniform filter bank constructed using Ramanujan sums \(c_{q_i}, i \in \mathcal{I}_K\), exhibits frame properties under specific conditions. Specifically,
			when the decimation ratio is set to \(p\) for \(c_{q_i}\) if \(p \nmid q_i\) or \(p > q_i\), and to \(1\) if \(p \mid q_i\) and \(p \leq q_i\), the collection $\mc R_{p,N}$ is modified to form a frame for $\ell^2(\mathbb Z_N).$
			%		 corresponding to which the individual collection \( \{ L_{pk}c_{q_i} \}_{k=0}^{d-1} \) spans \( S_{1,q_i} \), and decimation ratio \( 1 \) for those \( c_{q_i} \) for which \( \{ L_{pk}c_{q_i} \}_{k=0}^{d-1} \) does not span \( S_{1,q_i} \), results in  perfect reconstruction property or  modified form of \( \mathcal{R}_{p,N} \) to form a frame for \( \ell^2(\mathbb{Z}_N) \).
		\begin{thm}\label{thm: rpkframe}
				Let \( p \) be a prime divisor of \( N \), and let \( \{q_1, q_2, \ldots, q_K\} \) denote the set of all positive divisors of \( N \). For \( p > 2 \) and \( r \in \{1,2\} \), associate to each \( i \in \mathcal{I}_K \) the integer
				\[
				p_i := 
				\begin{cases}
					p, & \text{if } q_i \notin \mf D_p, \\
					r, & \text{if } q_i \in \mf D_p,
				\end{cases}
				\]
				where \( \mf D_p := \{q_i \mid N: p \mid q_i \text{ and } p \leq q_i\} \). Consider the system
			\begin{equation}\label{eq:R'pN_uniform}
				\mathcal{R}^r_{p,N} := \left\{ L_{p_1 k} c_{q_1} \right\}_{k \in \mathcal{J}_{N/p_1} } \cup \left\{ L_{p_2 k} c_{q_2} \right\}_{k \in \mathcal{J}_{N/p_2} }\cup \cdots \cup \left\{ L_{p_K k} c_{q_K} \right\}_{k \in \mathcal{J}_{N/p_K} }, 
			\end{equation}
				generated by a non-uniform Ramanujan filter bank. The following assertions hold:
				\begin{itemize}
					\item[$\mathrm{(i)}$] For \( r = 1 \), the system \( \mathcal{R}^1_{p,N} \) forms a frame for \( \ell^2(\mathbb{Z}_N) \) for all positive integers \( N \).
					\item[$\mathrm{(ii)}$] For \( r = 2 \), assume \( N = 2p_1^{\alpha_1} p_2^{\alpha_2} \cdots p_n^{\alpha_n} \) with \( p_i \neq 2 \) for all \( i \in \mathcal{I}_n \). Then, the system  \( \mathcal{R}^2_{p,N} \) is a frame for \( \ell^2(\mathbb{Z}_N) \).
				\end{itemize}
			\end{thm}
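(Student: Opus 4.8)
The plan is to reduce the frame property of the whole non-uniform system $\mathcal{R}^r_{p,N}$ to a separate spanning statement on each Ramanujan subspace, exploiting the orthogonal decomposition $\ell^2(\mathbb{Z}_N) = S_{q_1} \oplus \cdots \oplus S_{q_K}$ established earlier. First I would record the elementary observation that, since $p$ is prime and every divisor $q_i$ of $N$ satisfies $p \mid q_i \Rightarrow p \leq q_i$, the set $\mathfrak{D}_p$ coincides with $\{q_i \mid N : p \mid q_i\}$; hence the decimation rule reads $p_i = p$ exactly when $p \nmid q_i$, and $p_i = r$ exactly when $p \mid q_i$. Because $c_{q_i}$ is $q_i$-periodic and $S_{q_i}$ is shift-invariant (it is the column space of the circulant $\mathcal{C}_{q_i}$), every vector $L_{p_i k} c_{q_i}$ lies in $S_{q_i}$, so the $i$-th block $\mathcal{D}_i := \{L_{p_i k} c_{q_i}\}_{k \in \mathcal{J}_{N/p_i}}$ is a finite subset of $S_{q_i}$.

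By Lemma \ref{lem:frameunion}, the union $\mathcal{R}^r_{p,N} = \bigcup_i \mathcal{D}_i$ is a frame for $\ell^2(\mathbb{Z}_N)$ if and only if each $\mathcal{D}_i$ is a frame for $S_{q_i}$; and since $S_{q_i}$ is finite-dimensional, this is the same as requiring $\operatorname{span} \mathcal{D}_i = S_{q_i}$. Thus the theorem reduces to verifying that each decimated collection spans its subspace, which I would carry out by the case split dictated by the decimation rule.

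For $p \nmid q_i$ (so $p_i = p$), I would use that $\gcd(p, q_i) = 1$ forces $p$ to be a unit modulo $q_i$, so $\{pk \bmod q_i : 0 \leq k \leq q_i - 1\} = \{0, 1, \ldots, q_i - 1\}$. Combined with the $q_i$-periodicity of $c_{q_i}$, which gives $L_{pk} c_{q_i} = L_{pk \bmod q_i} c_{q_i}$, the collection $\{L_{pk} c_{q_i}\}_{k=0}^{N/p - 1}$ realizes every shift $L_0 c_{q_i}, \ldots, L_{q_i - 1} c_{q_i}$ (the index range is long enough because $q_i \mid N/p$, as $q_i \mid N$ and $\gcd(p, q_i) = 1$), and these contain the $\phi(q_i)$ consecutive shifts known to form a basis of $S_{q_i}$. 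For $p \mid q_i$ (so $p_i = r$): when $r = 1$ the block is the full set of circular shifts of $c_{q_i}$, which trivially contains a basis of $S_{q_i}$, yielding part (i) for every $N$; when $r = 2$, under the hypothesis $N = 2 p_1^{\alpha_1} \cdots p_n^{\alpha_n}$ with $p_j \neq 2$, Theorem \ref{thm:basis} asserts that $\{L_{2k} c_{q_i}\}_{k=0}^{\phi(q_i)-1}$ is a basis of $S_{2,q_i}$; since $S_{2,q_i} \subseteq S_{q_i}$ with both of dimension $\phi(q_i)$ they coincide, and as $\phi(q_i) \leq N/2$ the block $\{L_{2k} c_{q_i}\}_{k=0}^{N/2 - 1}$ contains this basis and hence spans $S_{q_i}$. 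Assembling the three cases and invoking Lemma \ref{lem:frameunion} completes both parts.

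I expect the only genuinely delicate point to be the spanning claim in the case $p \nmid q_i$: one must confirm that decimating by $p$ still sweeps out a complete residue system modulo $q_i$ and that the index range $\mathcal{J}_{N/p}$ is long enough to recover $\phi(q_i)$ consecutive shifts. The remaining two cases are either immediate ($r = 1$) or a direct appeal to Theorem \ref{thm:basis} ($r = 2$), and the reduction to per-subspace spanning via Lemma \ref{lem:frameunion} is routine once the membership $\mathcal{D}_i \subseteq S_{q_i}$ has been noted.
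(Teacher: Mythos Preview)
Your proposal is correct and shares the paper's overall architecture: reduce to per-subspace spanning via the orthogonal decomposition and Lemma~\ref{lem:frameunion}, then check each block separately. The genuine difference lies in the case $p\nmid q_i$. The paper proves this via Proposition~\ref{prop:rankmqi}, which factors the matrix $\mathcal{Q}_{q_i}=[c_{q_i}\;L_pc_{q_i}\;\cdots]$ through a Vandermonde structure and shows its rank is $\phi(q_i)$ by checking distinctness of the entries $e^{-2\pi i pk_j/q_i}$ (which reduces to $\gcd(p,q_i)=1$). You instead exploit coprimality directly: since $p$ is a unit modulo $q_i$ and $c_{q_i}$ is $q_i$-periodic, the decimated shifts $\{L_{pk}c_{q_i}:0\le k<N/p\}$ actually coincide, as a set, with the full orbit $\{L_jc_{q_i}:0\le j<q_i\}$, which obviously contains the known basis. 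Your route is shorter and avoids the matrix machinery; the paper's route, via Proposition~\ref{prop:rankmqi}, has the advantage of also computing the rank in the complementary case $p\mid q_i$ (namely $\phi(q_i/p)$), which explains \emph{why} those channels need their decimation updated, not just that they do. For $r=1$ and $r=2$ your treatment matches the paper's (the observation $S_{2,q_i}=S_{q_i}$ by dimension is implicit there through the parallel decomposition in Theorem~\ref{thm:basis}).
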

		
To prove Theorem~\ref{thm: rpkframe}, we require the following supporting result, which characterizes the set \( \mathfrak{D}_p \) as the collection of those divisors \( q_i \) of \( N \) for which the associated Ramanujan sum \( c_{q_i} \) requires an update in its decimation ratio.
			\begin{prop}\label{prop:rankmqi}
		Let \( p  \) be a prime divisor of $N$ and	let $\mc Q_{q_i}$ be  the matrix   formed by the columns $\{L_{pk}c_{{q_i}}\}_{k\in \mc J_d},$ i.e., $\mc Q_{q_i}=\begin{bmatrix}
					c_{q_i} & L_{p}c_{q_i} &L_{2p}c_{q_i}& \cdots & L_{p(d-1)}c_{q_i}
				\end{bmatrix}$.  Then,
				\[
				\text{rank}\,\mc Q_{q_i} =
				\begin{cases} 
					\phi(q_i), & \text{if } p \nmid q_i \text{ or } p > q_i, \\[6pt]
					{\phi(q_i/p)}, & \text{if } p \mid q_i \text{ and } p \leq q_i.
				\end{cases}
				\]
			\end{prop}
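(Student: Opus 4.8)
The plan is to realize $\mc Q_{q_i}$ as a product of two Vandermonde-type matrices built from the $q_i$-th roots of unity, so that its rank can be read off from an injectivity argument together with a residue count. First I would expand each column via the defining formula \eqref{eq:Ramanujansum}: for $n \in \mc J_N$ and $j \in \mc J_d$ with $d = N/p$,
\begin{equation*}
(\mc Q_{q_i})_{n,j} = c_{q_i}(n - pj) = \sum_{\substack{k=1\\(k,q_i)=1}}^{q_i} e^{2\pi i k n/q_i}\, e^{-2\pi i k p j/q_i}.
\end{equation*}
This exhibits the factorization $\mc Q_{q_i} = A\,B$, where $A$ is the $N \times \phi(q_i)$ matrix with entries $A_{n,k} = e^{2\pi i kn/q_i}$ and $B$ is the $\phi(q_i) \times d$ matrix with entries $B_{k,j} = e^{-2\pi i kpj/q_i}$, the index $k$ ranging over the $\phi(q_i)$ residues coprime to $q_i$.

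Next I would show that $A$ has full column rank $\phi(q_i)$: its rows are indexed by $n$, and the submatrix on rows $n = 0,1,\dots,\phi(q_i)-1$ is a Vandermonde matrix in the distinct nodes $\omega_k := e^{2\pi i k/q_i}$ (distinct primitive $q_i$-th roots of unity), hence nonsingular. Since $A$ is injective, $\operatorname{rank}\mc Q_{q_i} = \operatorname{rank} B$. Now $B$ is itself a Vandermonde matrix whose $k$-th row is $(\zeta_k^{0}, \dots, \zeta_k^{d-1})$ with $\zeta_k := e^{-2\pi i kp/q_i}$; after collapsing equal rows, its rank equals the number $s$ of distinct values among the $\zeta_k$, provided $s \le d$ (so that an $s \times s$ Vandermonde block survives among the first $s$ columns). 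The bound $s \le d$ I would verify directly: when $p \mid q_i$ one has $s \le q_i/p \le N/p = d$, and when $p \nmid q_i$, the facts $q_i \mid N$ and $\gcd(q_i,p)=1$ force $q_i \mid N/p$, whence $s \le \phi(q_i) \le q_i \le d$.

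Finally I would count $s$, the number of distinct residues $kp \bmod q_i$ as $k$ runs over the units modulo $q_i$. Observe first that $p \mid q_i$ already forces $p \le q_i$ and $p > q_i$ forces $p \nmid q_i$, so the stated hypotheses reduce to the complementary cases $p \nmid q_i$ and $p \mid q_i$. If $p \nmid q_i$, then $p$ is invertible modulo $q_i$, so $k \mapsto kp$ permutes the units and $s = \phi(q_i)$, giving $\operatorname{rank}\mc Q_{q_i} = \phi(q_i)$. If $p \mid q_i$, writing $q_i = p\,(q_i/p)$ gives $kp \equiv k'p \pmod{q_i} \iff k \equiv k' \pmod{q_i/p}$, so $s$ equals the number of residues modulo $q_i/p$ attained by units modulo $q_i$; since reduction modulo a divisor maps units onto units surjectively, this image is all of $(\mathbb{Z}/(q_i/p))^{\ast}$, of cardinality $\phi(q_i/p)$, whence $s = \phi(q_i/p)$ and $\operatorname{rank}\mc Q_{q_i} = \phi(q_i/p)$.

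The main obstacle is this last counting step when $p \mid q_i$: establishing that the units modulo $q_i$ surject onto the units modulo $q_i/p$ under reduction is the one genuinely number-theoretic ingredient, and it needs care precisely when $p^2 \mid q_i$, so that $q_i/p$ is still divisible by $p$. It follows from the prime-power case (comparing $|\ker|$ and $|\operatorname{image}|$) combined with the CRT, but it should be invoked explicitly rather than taken for granted. Everything else reduces to the nonsingularity of Vandermonde matrices with distinct nodes and to the rank-preservation of left multiplication by the injective map $A$.
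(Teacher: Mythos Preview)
Your proof is correct and follows essentially the same route as the paper: both factor $\mc Q_{q_i}$ as a product $A B$ of Vandermonde-type matrices built from the primitive $q_i$-th roots of unity (the paper writes $A$ as $N/q_i$ stacked copies of a $q_i\times\phi(q_i)$ block $U_{q_i}$ and calls $B$ by $W_{q_i}$), note that $A$ has full column rank, and then reduce to counting the distinct nodes $e^{-2\pi i kp/q_i}$ in $B$. Your treatment is in fact slightly more careful than the paper's on two points---you explicitly check $s\le d$ so that an $s\times s$ Vandermonde minor is available, and you flag the surjectivity of $(\mathbb Z/q_i)^\ast\to(\mathbb Z/(q_i/p))^\ast$ as the substantive step in the $p\mid q_i$ case, whereas the paper asserts the value count $\phi(q_i/p)$ without isolating this.
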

			\begin{proof}
				Let $q_i, i \in \mc I_K$, be fixed. Consider the column-Vandermonde matrix
				\begin{equation}\label{eq:Wqi}
					W_{q_i}=\begin{bmatrix}
						1 && e^{-2\pi i pk_1/q_i}&&\cdots && e^{-2\pi i (d-1)pk_1/q_i}\\
						1 && e^{-2\pi i pk_2/q_i}&&\cdots && e^{-2\pi i (d-1)pk_2/q_i}\\
						\vdots && \vdots && \ddots&& \vdots\\
						1 && e^{-2\pi i pk_{\phi(q_i)}/q_i}&&\cdots && e^{-2\pi i (d-1)pk_{\phi(q_i)}/q_i}\\
					\end{bmatrix}_{\phi(q_i)\times d},
				\end{equation}
			where \( k_j \in \mathbb{Z}_{q_i} \), with \( 1 \leq j \leq \phi(q_i) \), are such that \((k_j, q_i) = 1 \) for each \( j \).	Then, it can be observed that the matrix $\mc Q_{q_i}$ can be written as follows:
			\[
				\mc Q_{q_i} = \left[
				\begin{array}{c}
					U_{q_i}  W_{q_i} \\[2pt]
					\hline
					U_{q_i}  W_{q_i}\\[2pt]
					\hline 
					\vdots\\[2pt]
					\hline
					U_{q_i}  W_{q_i}
				\end{array}
				\right]_{N \times d}
				\quad (N/q_i\, \text{times}),
				\] 
				where $U_{q_i}$ is the matrix defined in \eqref{eq:Uqi}.
				
We divide the remaining proof into two cases depending on whether \( q_i \in \mathfrak{D}_p \) or \( q_i \notin \mathfrak{D}_p \).

\medskip

\noindent \textbf{Case 1:} \( q_i \notin \mathfrak{D}_p \). In this case, we show that all entries in the second column of \( W_{q_i} \) are distinct. Suppose, for contradiction, that
$
e^{-2\pi i p k_j / q_i} = e^{-2\pi i p k_m / q_i}
$
for some \( 1 \leq j < m \leq\phi(q_i) \). This implies
$
e^{-2\pi i p(k_j - k_m)/q_i} = 1,
$
and hence \( p(k_j - k_m) = nq_i \) for some integer \( n \). This gives
$
k_j - k_m = \frac{nq_i}{p}.
$
Since \( p \) is prime, and \( p \nmid q_i \) or \( p > q_i \), we have \( (p, q_i) = 1 \). Therefore, \( nq_i/p \in \mathbb{Z} \) only if \( p \mid n \), which forces \( n = 0 \) because \( |k_j - k_m| < q_i \). Thus, \( k_j = k_m \), a contradiction. Hence, all entries in the second column of \( W_{q_i} \) are distinct. Using the same argument as in the proof of Theorem~\ref{thm:basis}, it follows that \( \operatorname{rank} \mathcal{Q}_{q_i} = \phi(q_i) \) in this case.

\medskip

\noindent \textbf{Case 2:} \( q_i \in \mathfrak{D}_p \). Let \( q_i = p \cdot r \). Then the entries in the second column of the Vandermonde matrix \( W_{q_i} \) are of the form
$
e^{-2\pi i p k_j / q_i} = e^{-2\pi i k_j / r}, \quad \text{where } (k_j, q_i) = 1, \quad 1 \leq j \leq \phi(q_i).
$
Since \( (k_j, q_i) = 1 \) and \( q_i = p \cdot r \), it follows that \( (k_j, r) = 1 \) as well. Consequently, the values
$
e^{-2\pi i k_j / r}, \quad 1 \leq j \leq \phi(q_i),
$
are \(\phi(r)\) distinct \( r \)th roots of unity corresponding to residue classes coprime to \( r \). Therefore, the second column of \( W_{q_i} \) contains only \( \phi(r) \) distinct values, and so rank \( W_{q_i}= \phi(r) \). As a result, the matrix \( \mathcal{Q}_{q_i} \) also has rank \( \phi(r) = \phi(q_i/p) \), as claimed.
			\end{proof}
			
			Finally, we prove Theorem \ref{thm: rpkframe}.
			
				\begin{proof}[Proof of Theorem \ref{thm: rpkframe}]
We first prove (i).	Consider the case when 	$q_i \in \mathfrak{D}_p $ and	$r=1$. Then the corresponding individual  collection \( \{L_k c_{q_i}\}_{k=0}^{N-1} \) contains \( \phi(q_i) \)  consecutive elements since \( N \geq \phi(q_i) \) for any \( i \in \mathcal{I}_K \). Then by \cite[Theorem 4]{vaidyanathan2014ramanujan1},   the  collection \( \{L_k c_{q_i}\}_{k=0}^{N-1} \) contains a basis of $S_{1,q_i}$ and hence forms a frame for $S_{1,q_i}$.
	Next, consider the case when $q_i \notin \mf D_{p}$ and the decimation ratio is $p$.
	Since $S_{1,q_i}$ is invariant under shifts, thus each of the individual collection $\{L_{pk}c_{q_i}\}_{k=0}^{d-1}$ for any $p$ lies inside $S_{1,q_i}.$	
	Furthermore, $\{L_{pk}c_{q_i}\}_{k=0}^{d-1}$ contains a basis of $S_{1,q_i}$  since rank $\mc Q_{q_i} = \phi(q_i)= \dim S_{1,q_i}$ and hence forms a frame for $S_{1,q_i}.$
Thus for each $i\in \mc I_K$, the corresponding collection from the given non-uniform filter bank forms a frame for $S_{1,q_i}$. Then by using the orthogonal decomposition of $\ell^2(\mathbb Z_N)$ in terms of Ramanujan subspaces $S_{1,q_i}$ for $i \in \mc I_K$ and Lemma \ref{lem:frameunion}, it follows that \( \mathcal{R}^1_{p,K}\) forms a frame for \( \ell^2(\mathbb{Z}_N) \).

The proof of (ii) follows analogously by using Theorem~\ref{thm:basis} and Lemma~\ref{lem:frameunion}.  This completes the proof.
			\end{proof}
			\begin{exmp}\label{exmp:nonuniform}
				Let \( N = 12 \) and choose \( p = 3 \), so that \( d  = 4 \). In this case, the set \( \mathfrak{D}_3 \), as defined in Theorem~\ref{thm: rpkframe}, is given by
				\(
				\mathfrak{D}_3 = \{3, 6, 12\}.
				\)
				Then, by Theorem~\ref{thm: rpkframe}, the collection
				\[
				\mathcal{R}_{3,12}^{1} = \{L_{3k} c_1\}_{k=0}^{3} \cup \{L_{3k} c_2\}_{k=0}^{3} \cup \{L_k c_3\}_{k=0}^{11} \cup \{L_{3k} c_4\}_{k=0}^{3} \cup \{L_k c_6\}_{k=0}^{11} \cup \{L_k c_{12}\}_{k=0}^{11}
				\]
				generated from a non-uniform filter bank with Ramanujan sums \( c_1, c_2, c_3, c_4, c_6 \), and \( c_{12} \), and with respective decimation ratios \( 3, 3, 1, 3, 1, 1 \), forms a frame for \( \ell^2(\mathbb{Z}_{12}) \).
			\end{exmp}
			\begin{remar}
	It is worth noting that a non-uniform filter bank based on Ramanujan sums can also be constructed when \( d \) is even, such that the resulting collection \( \mathcal{R}_{2,N} \) can be modified to form a frame for \( \ell^2(\mathbb{Z}_N) \). In particular, consider \( p = 2 \) and a divisor \( q_i \mid N \) satisfying \( 2 \mid q_i \) but \( 2^2 \nmid q_i \). In this case, we observe that
	\(
	\varphi(q_i) = \varphi\left(q_i/2\right),
	\)
	which implies that the decimation ratio for \( c_{q_i} \) need not be modified, as the matrix \( \mathcal{Q}_{q_i} \) already has rank \( \varphi(q_i) \). Therefore, the set \( \mathfrak{D}_2 \) consists precisely of those divisors \( q_i \mid N \) for which \( 2^2 \mid q_i \). With this characterization of \( \mathfrak{D}_2 \), a non-uniform filter bank can be constructed in this setting as well.
			\end{remar}

			\subsection{Erasures in the context of Ramanujan sums}\label{sub:erasure}
	In this subsection, we investigate the conditions for the robustness of the filter bank and the associated frame systems, $\mc R_{p,N}$, under erasures in the context of Ramanujan sums. \textit{Filter bank erasures} refer to the removal of one or more channels from a filter bank when certain channels become noisy and fail to preserve the original signal's information. Many researchers have studied the effect of such erasures on filter banks and obtained conditions under which the underlying collection \(\{L_{pk}\tilde{h}_j\}_{k \in \mathcal{J}_{N/p}}^{j \in \mathcal{I}_s \backslash \{j_1,j_2,\hdots,j_n\}}\) (after removing the channels corresponding to \(h_{j_1},h_{j_2}, \hdots, h_{j_n}\)) remains a frame, where  $h_i, i \in \mc I_s$ are the signals representing the channels of the given filter bank.  When this condition holds, the filter bank is said to be \textit{robust to \(n\) filter bank erasures}. See, for example, \cite{casazza2003equal, kovacevic2002filter, holmes2004optimal}. 
			
	In practical scenarios, it may happen that a subset of frame elements $\{L_{pk} \tilde{h}_i\}_{(k,i) \in \mc{I}}$ is lost due to erasures, where $\mc{I}$ denotes the corresponding index set and $\#_{\mc I} = e$. For stable reconstruction in the presence of such $e$ erasures, it is essential that the remaining collection $\{L_{pk} \tilde{h}_i\}_{(k,i) \in \mc{I}^c}$ still forms a frame. If this condition holds, the original frame $\{L_{pk} \tilde{h}_i\}_{(k,i) \in \mc{I} \cup \mc{I}^c}$ is said to be \emph{robust to $e$ erasures}. It is shown in~\cite{goyal2001quantized} that uniform tight frames are robust to 1 erasure, whereas general tight frames may not possess this property. In our work, we show that the tight frames \( \mathcal{R}_{1,N} \) and \( \mathcal{R}_{2,N} \) are robust to 1 erasure, despite not being uniform. 	Furthermore, we establish conditions under which these tight frames are robust to 2 erasures.
			\subsubsection{Filter bank erasures} 	
		Let \(\mathcal{U}_j(m)\), for \(m \in \mathcal{J}_d\) and \(j \in \mathcal{I}_K\), denote the analysis polyphase matrix obtained by removing the \(j\)-th Ramanujan sum \(c_{q_j}\) from the filter bank based on the Ramanujan sums \(c_{q_1}, c_{q_2}, \ldots, c_{q_K}\). This is a \((K-1) \times p\) matrix obtained by removing the \(j\)-th row from \(\mathcal{U}(m)\).  The following result provides an equivalent condition under which the filter bank based on the Ramanujan sums \( c_{q_1}, c_{q_2}, \ldots, c_{q_K} \) is robust to 1 filter bank erasure, under the assumption that it forms a tight frame.
			
			\begin{prop}\label{prop: Ramanujanerasure}
				Let the collection \( \mc R_{p,N} \) (defined in \eqref{eq:rpk}) be a tight frame for \( \ell^2(\mathbb{Z}_N) \) with frame bound \( A \). Then, the filter bank based on Ramanujan sums \( c_{q_1}, c_{q_2}, \ldots, c_{q_K} \) is robust to 1 filter bank erasure if and only if, for all \( m \in \mc J_d \) and \( j \in \mc I_K \), the following condition holds:
				\begin{equation*}\label{eq:robusterasure}
					1 - \frac{d}{A} \left( |\mc Z c_{q_j}(m,0)|^2 + |\mc Z c_{q_j}(m,1)|^2 + \cdots + |\mc Z c_{q_j}(m,p-1)|^2 \right) \neq 0.
				\end{equation*}
			\end{prop}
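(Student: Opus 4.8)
The plan is to translate the robustness condition into a rank condition via Lemma~\ref{lem:mainlem}, and then exploit tightness to reduce everything to the eigenvalue structure of a rank-one perturbation of a scalar matrix. First I would record that, by definition, the filter bank is robust to one filter bank erasure precisely when, for every $j \in \mc I_K$, the reduced collection obtained by deleting the $j$-th channel $\{L_{pk}c_{q_j}\}_{k\in\mc J_d}$ still forms a frame for $\ell^2(\mathbb Z_N)$. The analysis polyphase matrix of this reduced filter bank is exactly $\mc U_j(m)$, so Lemma~\ref{lem:mainlem}(iv) shows that the reduced collection is a frame if and only if $\operatorname{rank}\mc U_j(m) = p$ for all $m \in \mc J_d$. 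Since $\mc U_j(m)$ is a $(K-1)\times p$ matrix, $\operatorname{rank}\mc U_j(m) = p$ holds if and only if the $p\times p$ Hermitian matrix $\mc U_j^\ast(m)\mc U_j(m)$ is nonsingular. Thus the whole problem reduces to deciding, for each $m$ and $j$, when $\mc U_j^\ast(m)\mc U_j(m)$ is invertible.

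Next I would invoke the tightness hypothesis. Because $\mc R_{p,N}$ is a tight frame with bound $A$, the definitions in \eqref{eq: AB} force $\lambda_{\min}[\mc U^\ast(m)\mc U(m)] = \lambda_{\max}[\mc U^\ast(m)\mc U(m)] = A$ for every $m \in \mc J_d$; as $\mc U^\ast(m)\mc U(m)$ is Hermitian and positive semidefinite, having all its eigenvalues equal to $A$ yields the identity $\mc U^\ast(m)\mc U(m) = A I_p$ for every $m$. Writing $\mathbf u_j(m)$ for the $j$-th row of $\mc U(m)$, namely $\mathbf u_j(m) = \sqrt d\,\bigl(\ol{\mc Z c_{q_j}(m,0)},\ldots,\ol{\mc Z c_{q_j}(m,p-1)}\bigr)$, the expansion $\mc U^\ast(m)\mc U(m) = \sum_{i=1}^K \mathbf u_i^\ast(m)\mathbf u_i(m)$ holds, and deleting the $j$-th row simply removes one summand:
\[
\mc U_j^\ast(m)\mc U_j(m) = \mc U^\ast(m)\mc U(m) - \mathbf u_j^\ast(m)\mathbf u_j(m) = A I_p - \mathbf u_j^\ast(m)\mathbf u_j(m).
\]
Hence $\mc U_j^\ast(m)\mc U_j(m)$ is a rank-one perturbation of the scalar matrix $A I_p$.

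Finally, I would compute the determinant of this rank-one perturbation by the matrix determinant lemma, using $\|\mathbf u_j(m)\|^2 = \mathbf u_j(m)\mathbf u_j^\ast(m) = d\sum_{n=0}^{p-1} |\mc Z c_{q_j}(m,n)|^2$:
\[
\det\!\bigl(A I_p - \mathbf u_j^\ast(m)\mathbf u_j(m)\bigr) = A^p\Bigl(1 - \tfrac{1}{A}\,\|\mathbf u_j(m)\|^2\Bigr) = A^{p-1}\bigl(A - \|\mathbf u_j(m)\|^2\bigr).
\]
Since $A > 0$, this determinant is nonzero, i.e. $\mc U_j^\ast(m)\mc U_j(m)$ is nonsingular, if and only if $A \neq \|\mathbf u_j(m)\|^2$, which is exactly the condition $1 - \tfrac{d}{A}\sum_{n=0}^{p-1}|\mc Z c_{q_j}(m,n)|^2 \neq 0$. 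Quantifying over all $j \in \mc I_K$ and all $m \in \mc J_d$ then gives the stated equivalence.

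The only genuinely delicate point is the passage from the tight-frame hypothesis to the pointwise identity $\mc U^\ast(m)\mc U(m) = A I_p$ for each individual $m$: it is precisely here that tightness (rather than the mere frame inequality) is essential, since it forces every eigenvalue of every $\mc U^\ast(m)\mc U(m)$ to collapse to the common value $A$. Once that is in hand, the rank-one structure and the determinant computation are routine, and the result follows by tracking eigenvalues of $A I_p - \mathbf u_j^\ast(m)\mathbf u_j(m)$.
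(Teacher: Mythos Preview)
Your proof is correct and follows essentially the same route as the paper: reduce robustness to the rank condition $\operatorname{rank}\mc U_j(m)=p$ via Lemma~\ref{lem:mainlem}, use tightness to obtain $\mc U^\ast(m)\mc U(m)=AI_p$, and analyze the rank-one perturbation $AI_p-\mathbf u_j^\ast(m)\mathbf u_j(m)$. The only cosmetic difference is that the paper checks invertibility by writing out the Sherman--Morrison inverse $(P-QS)^{-1}=P^{-1}+P^{-1}Q(1-SP^{-1}Q)^{-1}SP^{-1}$, whereas you use the matrix determinant lemma; your route is slightly cleaner since only nonsingularity (not the explicit inverse) is needed.
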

			\begin{proof}
				By Lemma \ref{lem:mainlem}, it is clear that the filter bank, after the  removal of the channel corresponding to $c_{q_j},$ exhibits a frame only if \(\operatorname{rank}\,\,\mathcal{U}_j(m) = p\).	We therefore show that $\operatorname{rank}\, \mc U_j (m)^\asterisk \mc U_j (m) = \text{rank}\,\, \mc U_j (m)=p$ for $j \in \mc I_K.$ Note that, for any matrix $M$ with rows $R_i, i\in \mc I_L,$ we can write
				$M^\asterisk M=\sum_{i=1}^LR_i^\asterisk R_i.$
			Thus,	if $C_{q_i}(m)$ denotes the $i$-th row of $\mc U(m),$ i.e., $$C_{q_i}(m)=\sqrt{d}\begin{bmatrix}
					\ol{\mc Zc_{q_i}(m,0)}&	\ol{\mc Zc_{q_i}(m,0)}&
					\cdots&
					\ol{\mc Zc_{q_i}(m,p-1)}
				\end{bmatrix},\quad m \in \mc J_d,$$  then for $k \in \mc I_K$ and $m \in \mc J_d$, we have
				\begin{equation*}
					\begin{aligned}
						\mc U_j (m)^\asterisk \mc U_j (m)&= \sum_{i=1}^p C_{q_i}(m)^\asterisk C_{q_i}(m) - C_{q_j}(m)^\asterisk C_{q_j}(m)
						&=\mc U(m)^\asterisk\mc U(m)-C_{q_j}(m)^\asterisk C_{q_j}(m).
					\end{aligned}
				\end{equation*}
					Taking the  inverse on both sides of the above equation and using the identity $(P-QS)^{-1}=P^{-1}+ P^{-1}Q(1-SP^{-1}Q)^{-1}SP^{-1}$ along with  $\mc U^\asterisk(m)\mc U(m)=A I_p$ for $m\in \mc J_d,$  we get 
				\begin{equation}
					\begin{aligned}
						(	\mc U_j (m)^\asterisk \mc U_j (m))^{-1}& =( \mc U(m)^\asterisk\mc U(m)-C_{q_j}(m)^\asterisk C_{q_j}(m))^{-1}=\left( AI_p -C_{q_j}(m)^\asterisk C_{q_j}(m) \right)^{-1}\\
						&=\frac{1}{A}I_p+ \frac{1}{A}I_p C_{q_j}(m)^\asterisk \left(1-C_{q_j}(m)\frac{1}{A}I_p C_{q_j}(m)^\asterisk\right)^{-1}C_{q_j}(m)\frac{1}{A}I_p\\
						&=\frac{1}{A}I_p + \frac{1}{A^2}\left(1-\frac{1}{A}C_{q_j}(m) C_{q_j}(m)^\asterisk\right)^{-1}C_{q_j}(m)^\asterisk C_{q_j}(m),
					\end{aligned}
				\end{equation}
				$j \in \mc I_K$ and $m \in \mc J_d.$
				Note that $\mc U_j (m)^\asterisk \mc U_j (m)$ is invertible if and only if $1-\frac{1}{A}C_{q_j}(m) C_{q_j}(m)^\asterisk \neq 0,$ which is equivalent to
				$1-\frac{d}{A}\sum_{n=0}^{p-1}|\mc Zc_{q_j}(m,n)|^2 \neq 0$ for $j \in \mc I_K$ and $m \in \mc J_d.$ 
				Hence the claim follows. 
			\end{proof}
			We have the following result.
		\begin{thm}\label{thm:filtererasure}
		The  filter bank based on Ramanujan sums $c_{q_1},c_{q_2}, \hdots, c_{q_K}$  is
				not robust to one filter bank erasure for any decimation ratio.
			\end{thm}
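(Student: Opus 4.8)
The plan is to exploit the orthogonal decomposition $\ell^2(\mathbb Z_N) = S_{q_1} \oplus S_{q_2} \oplus \cdots \oplus S_{q_K}$ established in Section~\ref{sub:3nonuniform}, together with the mutual orthogonality $S_{q_i} \perp S_{q_j}$ for $i \neq j$ coming from Proposition~\ref{prop:properties}(iv). The central observation is that each channel of the filter bank is confined to its own orthogonal summand: every circular shift $L_{pk}c_{q_i}$ lies in the Ramanujan subspace $S_{q_i}$, so erasing a single channel deletes all frame vectors belonging to one of the orthogonal pieces and leaves that piece entirely uncovered.

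First I would record that $S_{q_i}$ is invariant under the circular shift $L_1$, and hence under every $L_{pk}$. This is immediate from the Fourier-support description of $S_{q_i}$ (signals supported on the frequencies $\{kN/q_i : (k,q_i)=1\}$), since $L_m$ merely multiplies the DFT by a unimodular phase and cannot alter the support; equivalently, any $\phi(q_i)$ consecutive shifts of $c_{q_i}$ span $S_{q_i}$, so all shifts stay inside $S_{q_i}$. Consequently, for any fixed decimation ratio $p$ and any fixed $j \in \mc I_K$, the entire $j$-th channel $\{L_{pk}c_{q_j}\}_{k}$ sits inside $S_{q_j}$, while every surviving channel $\{L_{pk}c_{q_i}\}_k$ with $i \neq j$ sits inside $S_{q_i} \subseteq S_{q_j}^{\perp}$.

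Next I would erase the $j$-th channel and test the lower frame inequality on a vector drawn from the removed subspace. Since $\dim S_{q_j} = \phi(q_j) \geq 1$, I can select a nonzero $x \in S_{q_j}$. Orthogonality yields $\langle x, L_{pk}c_{q_i}\rangle = 0$ for every $i \neq j$ and every $k$, so the total frame energy of $x$ over the remaining channels vanishes:
\[
\sum_{i \neq j}\sum_{k} \bigl|\langle x, L_{pk}c_{q_i}\rangle\bigr|^2 = 0, \qquad \text{while } \|x\|^2 > 0.
\]
Hence no lower frame bound can hold, and the collection obtained after removing the $j$-th channel fails to be a frame for $\ell^2(\mathbb Z_N)$. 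As both $j \in \mc I_K$ and the decimation ratio $p$ were arbitrary, this establishes that the filter bank is not robust to a single filter bank erasure for any decimation ratio.

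The argument has no serious obstacle once the orthogonal decomposition is in hand; its only delicate point is the shift-invariance of $S_{q_i}$, which guarantees that all $d$ decimated shifts — not merely the first $\phi(q_i)$ — remain trapped in $S_{q_i}$. I would also note that this approach is cleaner than invoking the determinant criterion of Proposition~\ref{prop: Ramanujanerasure}: that criterion presupposes $\mc R_{p,N}$ is already a tight frame (only $p=1$, and $p=2$ with odd $d$), whereas the orthogonality argument covers every decimation ratio uniformly, including the cases $p>2$ where $\mc R_{p,N}$ is not even a frame.
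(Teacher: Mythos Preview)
Your proof is correct and takes a genuinely different route from the paper's. The paper argues by cases: for $p>2$ and for $p=2$ with even $d$ it invokes Theorem~\ref{thm:tight} to say $\mc R_{p,N}$ is not a frame to begin with, hence trivially not robust; for the tight-frame cases $p=1$ and $p=2$ with odd $d$ it applies the Zak-transform criterion of Proposition~\ref{prop: Ramanujanerasure} at the specific index $q_j=1$, $m=0$, computing $|\mc Zc_1(0,n)|^2$ explicitly to show the determinant condition fails. Your argument instead uses the orthogonal decomposition $\ell^2(\mathbb Z_N)=\bigoplus_i S_{q_i}$ and the shift-invariance of each $S_{q_i}$ to see that erasing channel $j$ removes every frame vector in the summand $S_{q_j}$, so any nonzero $x\in S_{q_j}$ witnesses failure of the lower bound. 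Your approach is more elementary (no Zak transform, no explicit computation), handles all decimation ratios uniformly without case splitting, and in fact proves the stronger statement that erasing \emph{any} channel destroys the frame property, not just the channel $c_1$. The paper's route, on the other hand, serves as a concrete illustration of Proposition~\ref{prop: Ramanujanerasure}, which was just established.
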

			\begin{proof}
				Note that for $p>2,$ and for $p=2$ with even $d$,  $\mc R_{p,N},$ as shown in Theorem \ref{thm:tight}, does not form a frame for $\ell^2(\mathbb{Z}_N),$ and hence the corresponding filter bank  is trivially not robust to one filter bank erasure for these cases.
				
				Let  $p=2$ with $d$ odd.  Note that for $q_i=1$  and $m=0,$  we have
				$$|\mc Zc_{1}(0,0)|^2 + |\mc Zc_{1}(0,1)|^2 =\frac{1}{d}\bigg(\big|\sum_{\ell=0}^{d-1}c_{1}(2\ell)\big|^2 + \big|\sum_{\ell=0}^{d-1}c_{1}(2\ell+1)\big|^2\bigg)=\frac{2d^2}{d}=2d=N, $$
				where we have used $c_1=(1,1, \hdots, 1).$ The above equation implies 
				$
				N\big(1-\frac{d}{2d^2}\sum_{n=0}^1|\mc Zc_1(0,n)|^2\big)=0$. 
			Similarly, for $p=1, q_i=1$,  and $m=0,$ we have 
				$|\mc Zc_{1}(0,0)|^2  =\frac{1}{N}\big|\sum_{\ell=0}^{N-1}c_{1}(\ell)\big|^2 =\frac{N^2}{N}=N$.  Then by Proposition \ref{prop: Ramanujanerasure}, the corresponding filter banks are not robust 1 filter bank erasure for $p = 2$ with $d$ odd, and for $p = 1$.  On combining everything, the claim follows.
			\end{proof}
			Theorem \ref{thm:filtererasure} suggests that 	filter banks associated with the tight frames $\mc R_{1,N}$ and $\mc R_{2,N}$ do not allow perfect reconstruction  after  the erasure of even one channel. This raises the question: if instead of  one channel, several vectors are deleted from \(\mc R_{1,N}\) or \(\mc R_{2,N}\), does the remaining collection still retain its frame properties? We address this question in the following subsection.
		\subsubsection{Frame erasures}
			In this subsection,	we prove that the tight frames $\mc R_{1,N}$ and $\mc R_{2,N}$ are robust to 1 erasure. Moreover, we have also obtained the conditions under which tight frames $\mc R_{1,N}$ and $\mc R_{2,N}$ are robust to 2 erasures.
			
	\begin{thm}\label{thm:erasure1}
	Let \( N \geq 2 \). Then, the following statements hold:
				\begin{itemize}
					\item[$(i)$] The system $\mc R_{1,N}$ is robust to 1 erasure.
					
					\item[$(ii)$] For odd d, the system $\mc R_{2,N}$ is robust to 1 erasure. 
				\end{itemize}
			\end{thm}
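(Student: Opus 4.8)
The plan is to invoke the standard deletion criterion for tight frames and then reduce everything to a single norm comparison. Since $\mc R_{1,N}$ (with bound $A=N^2$) and $\mc R_{2,N}$ for odd $d$ (with bound $A=2d^2$) are tight frames by Theorem~\ref{thm:tight}, their frame operator is $S=A\,\mathrm{Id}$. Deleting one element $f_0=L_{pk_0}c_{q_{i_0}}$ produces the frame operator $S_0 = A\,\mathrm{Id} - \langle\cdot,f_0\rangle f_0$, a rank-one perturbation of a multiple of the identity. This operator is diagonalized directly: it acts as $A$ on the hyperplane $f_0^{\perp}$ and as $A-\|f_0\|^2$ on $\mathrm{span}\{f_0\}$. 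Hence $S_0$ is positive definite — equivalently, the remaining family is still a frame — if and only if $\|f_0\|^2 < A$. (This is the frame-deletion specialization of the rank-one identity already used in Proposition~\ref{prop: Ramanujanerasure}; here it is even cleaner because $S$ is a scalar operator.) Robustness to one erasure therefore amounts to checking $\|L_{pk}c_{q_i}\|^2 < A$ for every element of the system.

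The second step is to compute these norms. Because the circular shift $L_{pk}$ is unitary on $\ell^2(\mathbb Z_N)$, we have $\|L_{pk}c_{q_i}\|^2 = \|c_{q_i}\|^2$. Using that $c_{q_i}$ is integer-valued and $q_i$-periodic with $q_i\mid N$, together with the sum-of-squares identity Proposition~\ref{prop:properties}(vi), I would record
\[
\|c_{q_i}\|^2 = \sum_{n=0}^{N-1} c_{q_i}^2(n) = \frac{N}{q_i}\sum_{n=0}^{q_i-1} c_{q_i}^2(n) = \frac{N}{q_i}\,q_i\,\phi(q_i) = N\phi(q_i).
\]
This is the same norm equality already noted in the remark following Theorem~\ref{thm:basis}, now applied to an arbitrary divisor.

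It then remains to verify the inequality in each case. For part $(i)$, with $A=N^2$, the condition $N\phi(q_i)<N^2$ is just $\phi(q_i)<N$, which holds for every divisor $q_i$ of $N\ge 2$: indeed $\phi(q_i)\le q_i\le N$, with $\phi(q_i)=q_i$ only for $q_i=1$ (and $\phi(1)=1<N$), while for $q_i\ge 2$ we have $\phi(q_i)\le q_i-1\le N-1<N$. For part $(ii)$, with $A=2d^2$ and $N=2d$, the condition $2d\,\phi(q_i)<2d^2$ reduces to $\phi(q_i)<d$. Writing each divisor of $2d$ as $q_i=2^a m$ with $a\in\{0,1\}$ and $m\mid d$, and using $\phi(2m)=\phi(m)$ for odd $m$, this becomes $\phi(m)<d$ for every $m\mid d$.

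The only genuinely arithmetic point — and the step where care is required — is this inequality $\phi(q_i)<d$ in part $(ii)$. It holds for every divisor once $d\ge 3$: a proper odd divisor $m\mid d$ satisfies $m\le d/3$, so $\phi(m)\le m<d$, while $m=d$ gives $\phi(d)<d$ and $m=1$ gives $\phi(1)=1<d$. The lone degenerate case is $d=1$ (that is, $N=2$), where $\mc R_{2,2}=\{c_1,c_2\}$ is an orthogonal \emph{basis} and the bound $\|c_1\|^2=2=A$ is attained, so deletion genuinely fails; this is the boundary at which the strict inequality must be checked. Apart from pinning down this number-theoretic estimate, the argument is a routine assembly of the tight-frame deletion criterion, unitarity of shifts, and the sum-of-squares property of Ramanujan sums.
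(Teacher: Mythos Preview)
Your argument is correct and takes a genuinely different route from the paper. The paper proves robustness structurally: using the orthogonal decomposition $\ell^2(\mathbb Z_N)=\bigoplus_i S_{p,q_i}$ (Theorem~\ref{thm:basis}) and Lemma~\ref{lem:frameunion}, it observes that deleting $L_{p\ell}c_{q_j}$ affects only the $j$-th block, and then checks that the surviving $d-1$ shifts still contain $\phi(q_j)$ \emph{consecutive} shifts of $c_{q_j}$, hence a basis of $S_{p,q_j}$. Your approach bypasses the subspace structure entirely, using only that $\mc R_{p,N}$ is tight so that the frame operator after one deletion is a rank-one perturbation of a scalar, reducing the question to the single inequality $\|c_{q_i}\|^2 = N\phi(q_i) < A$. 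This is shorter and uses less machinery. The paper's decomposition argument, on the other hand, is what drives the two-erasure result (Theorem~\ref{thm:erasure2}), where the rank-one trick no longer suffices; so their approach is set up to generalize, while yours is optimized for the single-erasure case.

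You also correctly isolate the boundary case $d=1$ (i.e.\ $N=2$) in part~(ii): there $\mc R_{2,2}=\{c_1,c_2\}$ is an orthogonal basis with $\|c_i\|^2 = 2 = A$, so one deletion genuinely destroys spanning. The paper's proof, which handles (ii) by saying it ``follows analogously'' via Theorem~\ref{thm:basis}, needs $d-1\ge\phi(q_i)$ and so tacitly requires $d\ge 3$ as well; the stated hypothesis $N\ge 2$ does not exclude $d=1$. Your observation that the theorem is literally false at $N=2$ for part~(ii) is a legitimate catch.
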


			\begin{proof}
				We begin by proving part~(i). For any \( (\ell, j) \in \mathcal{J}_{N} \times \mathcal{I}_{K} \), define
				\[
				\mathcal{R}_{1,N}^{(\ell,j)} := \mathcal{R}_{1,N} \setminus \{L_{\ell} c_{q_j}\}.
				\]
				The system \( \mathcal{R}_{1,N} \) is robust to one erasure if and only if \( \mathcal{R}_{1,N}^{(\ell,j)} \) forms a frame for \( \ell^2(\mathbb{Z}_N) \) for every such index pair \( (\ell, j) \). 
				By Lemma~\ref{lem:frameunion}, it suffices to verify that the collection
				\(
				\{L_k c_{q_j}\}_{\substack{k \in \mathcal{J}_N \\ k \neq \ell}}
				\)
				forms a frame for the subspace \( S_{1, q_j} \), since for all \( i \neq j \), the collections \( \{L_k c_{q_i}\}_{k \in \mathcal{J}_N} \) continue to be frames for \( S_{1, q_i} \) by Theorem~\ref{thm:tight} and Lemma~\ref{lem:frameunion}.
				
				Observe that for any \( N \geq 2 \), we have \( N - 1 \geq \phi(q_i) \) for all \( i \in \mathcal{I}_K \). Thus, after one erasure, the collection \(  \{L_k c_{q_j}\}_{\substack{k \in \mathcal{J}_N \\ k \neq \ell}} \) contains at least \( \phi(q_j) \) elements. Moreover, due to periodicity, it must include a set of \( \phi(q_j) \) consecutive shifts of \( c_{q_j} \), i.e.,
				\(
				\{L_k c_{q_j}\}_{k = \ell+1}^{\ell+\phi(q_j)} \subseteq \{L_k c_{q_j}\}_{\substack{k \in \mathcal{J}_N \\ k \neq \ell}}.
				\)
	Therefore, this collection contains a basis of \( S_{1,q_j} \), and hence forms a frame for \( S_{1,q_j} \). Consequently, by Lemma~\ref{lem:frameunion}, the system \( \mathcal{R}_{1,N}^{(\ell,j)} \) is a frame for \( \ell^2(\mathbb{Z}_N) \) for every \( (\ell, j) \in \mathcal{J}_N \times \mathcal{I}_K \), completing the proof of (i).
				
				The proof of part~(ii) follows analogously, by applying Theorem~\ref{thm:basis} and Lemma~\ref{lem:frameunion}.
			\end{proof}
\begin{thm}\label{thm:erasure2}
		The following statements hold:
		\begin{itemize}
			\item[$(i)$] If \( N - 1 \geq 2\phi(N) \), then the system \( \mathcal{R}_{1,N} \) is robust to 2 erasures.
			
			\item[$(ii)$] If \( d \) is odd and \( d - 1 \geq 2\phi(N) \), then the system \( \mathcal{R}_{2,N} \) is robust to 2 erasures.
		\end{itemize}
	\end{thm}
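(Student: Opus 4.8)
The plan is to reduce robustness to $2$ erasures to a statement about each Ramanujan subspace in the relevant orthogonal decomposition, and then to settle the decisive case by a short combinatorial count of surviving consecutive shifts. For part~(i) I would first invoke the decomposition $\ell^2(\mathbb Z_N)=S_{q_1}\oplus\cdots\oplus S_{q_K}$ together with Lemma~\ref{lem:frameunion}: writing $\mc R_{1,N}=\bigcup_{j\in\mc I_K}\mc D_j$ with $\mc D_j=\{L_kc_{q_j}\}_{k\in\mc J_N}\subset S_{q_j}$, the collection obtained after deleting any two elements is a frame for $\ell^2(\mathbb Z_N)$ if and only if each surviving sub-collection $\mc D_j'$ remains a frame for (equivalently, spans) the $\phi(q_j)$-dimensional space $S_{q_j}$. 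Thus it suffices to show that after removing any two vectors, every $\mc D_j'$ still contains a basis of $S_{q_j}$.

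Next I would split into two cases by how the two erasures distribute. If the erased vectors lie in different sub-collections $\mc D_i,\mc D_j$ with $i\neq j$, then each loses only one element while every other $\mc D_\ell$ is untouched; the single-erasure argument of Theorem~\ref{thm:erasure1} (which needs only $N-1\ge\phi(q_i)$, valid for $N\ge 2$) keeps the affected sub-collections frames for their subspaces, so the union is a frame. The decisive case is when both erasures fall in the same $\mc D_j$: after deleting $L_{\ell_1}c_{q_j}$ and $L_{\ell_2}c_{q_j}$ with $\ell_1<\ell_2$, the surviving shift indices split, cyclically, into two blocks of consecutive integers of lengths $a=\ell_2-\ell_1-1$ and $b=N-1-(\ell_2-\ell_1)$, so that $a+b=N-2$. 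Hence the longer block consists of at least $\lceil (N-2)/2\rceil$ consecutive shifts of $c_{q_j}$, and by \cite[Theorem~4]{vaidyanathan2014ramanujan1} any $\phi(q_j)$ consecutive shifts form a basis of $S_{q_j}$; so $\mc D_j'$ contains a basis of $S_{q_j}$ provided $\lceil (N-2)/2\rceil\ge\phi(q_j)$.

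I would then close the arithmetic. Since $q_j\mid N$ gives $\phi(q_j)\le\phi(N)$, the requirement reduces to $\lceil (N-2)/2\rceil\ge\phi(N)$. For odd $N$ this is exactly $N-1\ge 2\phi(N)$; for even $N$ the hypothesis $N-1\ge 2\phi(N)$ is automatically strict by parity (an odd number cannot equal the even number $2\phi(N)$), whence $N-2\ge 2\phi(N)$ and $\lceil (N-2)/2\rceil=(N-2)/2\ge\phi(N)$. In every case the longer surviving arc contains a basis of $S_{q_j}$, and Lemma~\ref{lem:frameunion} finishes part~(i). Part~(ii) is entirely parallel: using the decomposition $\ell^2(\mathbb Z_N)=S_{2,q_1}\oplus\cdots\oplus S_{2,q_K}$ of Theorem~\ref{thm:basis}, each sub-collection $\{L_{2k}c_{q_j}\}_{k\in\mc J_d}$ is indexed cyclically by $\mathbb Z_d$ (since $L_{2d}=L_N=\mathrm{Id}$), so two erasures in the same sub-collection leave consecutive even-shift blocks of total length $d-2$; for odd $d$ the longer block has at least $\lceil (d-2)/2\rceil=(d-1)/2\ge\phi(N)\ge\phi(q_j)$ shifts under $d-1\ge 2\phi(N)$, and Theorem~\ref{thm:basis} guarantees any $\phi(q_j)$ consecutive even shifts form a basis of $S_{2,q_j}$. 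The main obstacle is the bookkeeping of the two surviving arcs, in particular the parity observation that makes the single threshold $N-1\ge 2\phi(N)$ work uniformly for both even and odd $N$ in part~(i).
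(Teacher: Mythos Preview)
Your proof is correct and follows essentially the same strategy as the paper: reduce via the orthogonal decomposition and Lemma~\ref{lem:frameunion} to showing that, after two erasures in the same sub-collection, there remain $\phi(q_j)$ consecutive shifts of $c_{q_j}$. The only cosmetic difference is that the paper splits the same-channel case according to whether the gap $|k_i-k_j|$ exceeds $\phi(q_i)$ and picks the appropriate arc directly, whereas you always take the longer of the two surviving arcs and close with the parity observation for even $N$; both bookkeeping choices lead to the same conclusion.
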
	\begin{proof}
				We first prove (i). For indices $(k_i,i), (k_j,j) \in \mc J_{N} \times \mc I_K$ and $p \in \{1,2\}$, we define  $\mc R_{p,N}^{\{(k_i,i), (k_j,j)\}}:=\mc R_{p,N}\backslash \{L_{k_i}c_{q_{i}}, L_{k_j}c_{q_{j}}\}$.	If $i \neq j,$ then the claim follows from Theorem \ref{thm:erasure1} (i) as there is at most one erasure in each of the individual collections of $\mc R_{1,N}.$ We divide the case $i=j$ into two subcases: $ |k_i-k_j|> \phi(q_i)$ and $|k_i-k_j|\leq  \phi(q_i).$ 
				Now	if $|k_i-k_j|>\phi(q_i),$ then $\{L_{k_i+1}c_{q_i}, L_{k_i+2}c_{q_i}, \hdots, L_{k_i-1}c_{q_i}\}$ is a subcollection of $\{L_{k}c_{q_i} : k \in \mc J_{N}, k \neq k_i, k_j\},$ containing at least $\phi(q_i)$ consecutive elements and hence contains a basis of $S_{1,q_i}$. Also if $|k_i-k_j|\leq \phi(q_i),$ then $\{L_{k_j+1}c_{q_i}, L_{k_j+2}c_{q_i}, \hdots, L_{N-1}c_{q_i}, \hdots, L_{k_i-1}c_{q_i}\}$ is a subcollection of $\{L_{k}c_{q_i} : k \in \mc J_{N}, k \neq k_i, k_j\},$ containing at least $\phi(q_i)$ consecutive elements since the total number of  elements in this subcollection~is:
				$$(N-1) -(k_j+1)+1 + k_i=N-1+k_i-k_j \geq N-1-\phi(q_i)   \geq \phi(q_i),$$ where the last inequality follows from the hypothesis, using $\phi(q_i) \leq \phi(N)$ for $i \in \mathcal{I}_K$.
				Therefore this subcollection also	contains a basis of $S_{1,q_i}$.
				Thus, in both the cases $\{L_{k}c_{q_i} : k \in \mc J_{N}, k \neq k_i, k_j\}$ is a frame for $S_{1,q_i}$ and hence each of the individual collections in $\mc R_{1,N}^{\{(k_i,i), (k_j,j)\}}$  is a frame for their corresponding subspaces. Then, the claim follows from Lemma \ref{lem:frameunion}.

We now prove (ii). As in the proof of (i),  if $i \neq j,$ the claim follows from Theorem \ref{thm:erasure1} (ii). For $i=j,$ we divide the proof into two subcases: $ |k_i-k_j|> \phi(q_i),$ and  $|k_i-k_j|\leq  \phi(q_i).$ For the first case, the subcollection
				$
				\{L_{2(k_i+1)}c_{q_i}, L_{2(k_i+2)}c_{q_i}, \ldots, L_{2(k_j-1)}c_{q_i}\}
				$
				of the collection 	$
				\{L_{2k}c_{q_i} : k \in \mc J_{N}, k \neq k_i, k_j\},
				$ contains at least \(\phi(q_i)\) consecutive elements and hence contains a basis of $S_{2,q_i}$ by using Theorem \ref{thm:basis}. For the second case, the subcollection $
				\{L_{2(k_j+1)}c_{q_i}, L_{2(k_j+2)}c_{q_i}, \ldots, L_{2(d-1)}c_{q_i}, \ldots, L_{2(k_i-1)}c_{q_i}\}
				$	of the collection $
				\{L_{2k}c_{q_i} : k \in \mc J_d, k \neq k_i, k_j\}
				$
				contains at least \(\phi(q_i)\) consecutive elements since 
				$$(d-1)-(k_i+1) +1 + k_i=d-1+k_i-k_j>d-1 -\phi(q_i)\geq \phi(q_i),$$
				using reasoning similar to that in (i). Consequently, each collection in $\mc R_{2,N}^{\{(k_i,i), (k_j,j)\}}$ is a frame for its corresponding subspace. The result then follows from Lemma~\ref{lem:frameunion}.
			\end{proof}
			From the proofs of Theorems~\ref{thm:erasure1} and~\ref{thm:erasure2}, it follows that a tight frame system $\mathcal{R}_{p,N}$ (for $p \in \{1,2\}$) is robust to $k$ erasures if and only if each of its constituent tight frames $\{L_{pk} c_{q_i}\}_{k = 0}^{d-1}$ is robust to $k$ erasures, for all $i \in \mathcal{I}_K$.  
			Thus, the problem of determining the robustness of $\mathcal{R}_{p,N}$ reduces to analyzing the robustness properties of these individual collections.  
			The next remark shows that, for $p \in \{1,2\}$, each such collection can be interpreted both as a dynamical frame and as a harmonic frame. These frames have a wide range of applications and are studied widely in the literature; see, for example,~\cite{aldroubi2017dynamical, CHRISTENSEN2018dynamical, waldronharmonic2005}.

		\begin{remar}\label{rem:dynamical}
		Let $p\in\{1,2\}$.	Then, the tight frame $\{L_{pk} c_{q_i}\}_{k=0}^{d-1}$ can be realized as the orbit of a single vector under the action of a unitary operator.  
			Indeed, let $T : \ell^2(\mathbb{Z}_N) \to \ell^2(\mathbb{Z}_N)$ be defined by
			\[
			(Tx)(n) = x\big((n - p) \bmod N\big), \quad n \in \mathbb{Z}_N,
			\]
			so that $T = L_p$ is the circular shift by $p$. Then
			\[
			\{L_{pk} c_{q_i}\}_{k=0}^{d-1} = \{T^n c_{q_i}\}_{n=0}^{d-1},
			\]
			showing that it is a dynamical frame generated by $T$.
			
			Furthermore, the same frame can also be interpreted as a harmonic frame for  the Ramanujan subspace $S_{p,q_i}$.  
		For that,	let $U(S_{p,q_i})$ denote the group of unitary operators on $S_{p,q_i}$.  
			Since $S_{p,q_i}$ is invariant under circular shifts, the restriction $L_m|_{S_{p,q_i}}$ belongs to $U(S_{p,q_i})$ for all $m \in \mathbb{Z}_N$.  
			Define the subgroup
			\[
			G_p := \{L_{pk}\mid_{S_{p,q_i}} : k = 0,1,\dots, d-1\} \subset U(S_{p,q_i}),
			\]
			which is abelian because circular shifts commute.  
			The collection $\{L_{pk} c_{q_i}\}_{k=0}^{d-1}$ is precisely the $G_p$–orbit of $c_{q_i}$,
			\[
			\{L_{pk} c_{q_i}\}_{k=0}^{d-1} = \{g\,c_{q_i} : g \in G_p\},
			\]
			and satisfies $\|L_{pk}c_{q_i}\| = \|c_{q_i}\|$ for all $k$.  
			By \cite[Theorem 5.4]{waldronharmonic2005}, such a frame is harmonic.
		\end{remar}
	
			\begin{remar}\label{rem:erasures}
				Note that for $2$ erasures, it is necessary that for any $q_i |N, $ $N-2\geq \phi(q_i)$ 	 in case of the erasures from $\mc R_{1,N}$ and $d-2\geq \phi(q_i)$ in case of erasures from $\mc R_{2,N}$ since to accommodate a basis after erasures, we need at least $\phi(q_i)$ elements in each of the individual collections of the erased versions of $\mc R_{1,N}.$ and $\mc R_{2,N}.$ Both of these conditions are implied by the conditions in the hypotheses of  Theorem \ref{thm:erasure2} (i) and Theorem \ref{thm:erasure2} (ii), respectively. Indeed, we have $N-1 \geq 2 \phi(N)$ from Theorem \ref{thm:erasure2} (i). This implies $N-2 \geq 2 \phi(N)-1 \geq \phi(N) \geq \phi(q_i)$ for any $q_i|N.$ Similarly, the necessary condition $d-2 \geq \phi(q_i)$  for  $q_i|N$ also follows from the condition in Theorem \ref{thm:erasure2} (ii). 
			\end{remar}
			\begin{exmp}
				$(a).$ Let \( N = 4 \). The matrix \( M \) representing \( \mathcal{R}_{1,4} \) is:
		\[
				M = \begin{bmatrix}
					L_0c_1 & \cdots & L_3c_1 &
					L_0c_2 & \cdots & L_3c_2 &
					L_0c_4 & \cdots & L_3c_4
				\end{bmatrix}_{4 \times 12},
				\]
				where \( c_1 = (1,1,1,1)^T \), \( c_2 = (1,-1,1,-1)^T \), and \( c_4 = (2,0,-2,0)^T \). Due to the orthogonal decomposition of $\ell^2(\mathbb Z_4),$ the set of columns $D_{q_i}:=\{L_0c_{q_i}, L_1c_{q_i}, L_2c_{q_i},L_3c_{q_i}\}$ is orthogonal to $D_{q_j}:=\{L_0c_{q_j}, L_1c_{q_j}, L_2c_{q_j},L_3c_{q_j}\}$ for $q_i,q_j \in \{1,2,4\}, \text{ and } j \neq i.$ After removing one column from some \( D_{q_i} \), each \( D_{q_i} \) still contains at least 3 columns, preserving \( \phi(q_i) \) consecutive columns for each divisor \( q_i = 1, 2, 4 \), since  \( \phi(q_i) < 3 \) holds for each $q_i.$ Then, by Lemma \ref{lem:frameunion}, $R_{1,4}$ is robust to 1 erasure. But note that after the  erasure of columns $L_0c_4$ and $L_2c_4$ from $D_4,$ it will be left with $\{L_1c_4,L_3c_4\},$ which are linearly dependent and thus not able to span $S_{1,4}$ since $\dim S_{1,4}=\phi(4)=2 > 1.$ This shows $\mc R_{1,4}$ is not robust to 2 erasures. It is due to the fact that the necessary condition  $N-2 \geq \phi(q_i)$ for $q_i|4$ (discussed in Remark \ref{rem:erasures})  fails here for $q_i=4,$ since $ 2\ngeq 2 \times \phi(4)=4.$
				
				$(b).$ Let \( N = 210 \). Then for $p=2, d=105$ is odd. Also, $105-1>2\times \phi(210)=96.$   Thus, both the conditions given in Theorem \ref{thm:erasure1} (ii) and Theorem \ref{thm:erasure2} (ii) are satisfied and hence $\mc R_{2,N}$ is robust to 1 erasure and 2 erasures.
			\end{exmp}

\subsection{Fusion frames based on Ramanujan subspaces}\label{sub:fusion} Fusion frames, also known as frames of subspaces, arise naturally in the context of distributed processing. Unlike classical frames, which analyze signals via one-dimensional projections, fusion frames perform analysis by projecting signals onto multidimensional subspaces.  For further details on fusion frames, we refer the reader to~\cite{candes2016robust, casazza2012}.

In this subsection, we show that the tight frames $\mathcal{R}_{1,N}$ and $\mathcal{R}_{2,N}$ admit an interpretation as tight fusion frames in terms of Ramanujan subspaces.

To that end, we begin by recalling the definition of a fusion frame adapted to our setting.
\begin{defn}\label{def:fusionframe}
Let \(S_{p,q_i}\) denote the Ramanujan subspace corresponding to the divisors $p$ and  \(q_i\) of \(N\), for \(i \in \mathcal{I}_K\), as defined in \eqref{eq:ramanujansubspaces}, and let \((w_i)_{i\in \mc I_K} \subseteq \mathbb{R}^{+}\) be a corresponding set of positive weights. Then the  family $\left\{(S_{p,q_i}, w_i)\right\}_{i\in \mc I_K}$ is called a \emph{fusion frame} for \(\ell^2(\mathbb Z_N)\) if there exist constants \(0 < A \leq B < \infty\) such that
	\[
	A\|x\|^2 \leq \sum_{i\in \mc I_K} w_i^2 \|P_{S_{p,q_i}}(x)\|^2 \leq B \|x\|^2 \quad \text{for all } x \in \ell^2(\mathbb Z_N),
	\]
	where for a subspace $W\subseteq \ell^2(\mathbb Z_N)$, \(P_W\) denotes the orthogonal projection onto \(W\).
	
	The constants \(A\) and \(B\) are referred to as the \emph{lower} and \emph{upper fusion frame bounds}, respectively. If \(A = B\), the fusion frame is said to be  a \emph{tight fusion frame with bound $A$}. When \(A = B = 1\), the fusion frame is called a \emph{Parseval fusion frame}. If the weights satisfy \(w_i = 1\) for all \(i\in \mc I_K\), we simply write \(\{S_{p,q_i}\}_{i\in \mc I_K}\).
\end{defn}
We have the following result.
\begin{thm}\label{thm:fusion}
The  following statements hold:
\begin{itemize}
\item [$(i)$] The family $\left\{S_{1,q_i}\right\}_{i\in \mc I_K}$ is a Parseval fusion frame.
\item [$(ii)$] For odd $d$, the family $\left\{S_{2,q_i}\right\}_{i\in \mc I_K}$ is a Parseval fusion frame.
\end{itemize}
\end{thm}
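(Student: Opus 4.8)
The plan is to recognize that a Parseval fusion frame with unit weights (\(w_i = 1\)) is, by Definition~\ref{def:fusionframe}, nothing more than the orthogonal projection identity
\[
\|x\|^2 = \sum_{i\in \mc I_K} \|P_{S_{p,q_i}}(x)\|^2, \qquad x \in \ell^2(\mathbb{Z}_N),
\]
and that such an identity holds automatically whenever \(\ell^2(\mathbb{Z}_N)\) decomposes as an orthogonal direct sum of the subspaces \(S_{p,q_i}\). Thus the entire theorem reduces to invoking the orthogonal decompositions already established earlier in the paper, rather than estimating any frame inequality directly.

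For part (i), I would recall that \(S_{1,q_i} = S_{q_i}\) and that the orthogonality property of Ramanujan sums (Proposition~\ref{prop:properties}(iv)) makes the subspaces \(S_{q_i}\), \(i \in \mc I_K\), mutually orthogonal, while Gauss's theorem on sums of Euler's totient function, \(\sum_{i=1}^K \phi(q_i) = N\), forces \(\sum_i \dim S_{q_i} = \dim \ell^2(\mathbb{Z}_N)\). Together these give the orthogonal decomposition \(\ell^2(\mathbb{Z}_N) = S_{1,q_1} \oplus \cdots \oplus S_{1,q_K}\) recorded in the discussion preceding Theorem~\ref{thm:basis}. For part (ii), under the hypothesis that \(d\) is odd — so that \(N = 2d = 2p_1^{\alpha_1}\cdots p_n^{\alpha_n}\) with each \(p_i \neq 2\) — Theorem~\ref{thm:basis} supplies precisely the analogous orthogonal decomposition \(\ell^2(\mathbb{Z}_N) = S_{2,q_1} \oplus \cdots \oplus S_{2,q_K}\).

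Given either decomposition, the conclusion follows in one step: for any \(x \in \ell^2(\mathbb{Z}_N)\) the orthogonal projections satisfy \(x = \sum_{i\in \mc I_K} P_{S_{p,q_i}}(x)\), and since the summands lie in mutually orthogonal subspaces, the Pythagorean theorem yields \(\|x\|^2 = \sum_{i\in \mc I_K} \|P_{S_{p,q_i}}(x)\|^2\). This is exactly the Parseval fusion frame condition of Definition~\ref{def:fusionframe} with \(A = B = 1\) and \(w_i = 1\), establishing both (i) and (ii).

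Because the heavy lifting — mutual orthogonality via Proposition~\ref{prop:properties}(iv) and completeness via the totient dimension count (respectively Theorem~\ref{thm:basis}) — has already been carried out, there is no genuine obstacle remaining; the only point requiring care is to confirm that the hypothesis of part (ii) genuinely places us in the setting of Theorem~\ref{thm:basis}, i.e.\ that odd \(d\) with \(N = 2d\) indeed yields a factorization of the required form \(2p_1^{\alpha_1}\cdots p_n^{\alpha_n}\) with \(p_i \neq 2\). Since an odd \(d\) has no factor of \(2\), this is immediate.
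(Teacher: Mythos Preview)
Your proof is correct and takes a genuinely more direct route than the paper. The paper argues via Proposition~\ref{prop:fusion}: it first invokes Theorem~\ref{thm:tight} to obtain that \(\mc R_{p,N}\) is a tight frame for \(\ell^2(\mathbb Z_N)\) with bound \(pd^2\), then applies Lemma~\ref{lem:frameunion} to deduce that each local collection \(\{L_{pk}c_{q_i}\}_{k\in\mc J_d}\) is a tight frame for \(S_{p,q_i}\) with the same bound, and finally reads off the fusion frame bound \(C=1\) from the relation \(AC=pd^2\) with \(A=pd^2\). You instead observe that the Parseval fusion frame identity with unit weights is exactly the Pythagorean identity for an orthogonal direct sum, and invoke the decompositions already recorded in the paper (the discussion preceding Theorem~\ref{thm:basis} for \(p=1\), and Theorem~\ref{thm:basis} itself for \(p=2\)). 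Your argument is shorter and avoids the frame-theoretic detour through Theorem~\ref{thm:tight} and Proposition~\ref{prop:fusion}; the paper's approach, while longer, has the advantage of explicitly linking the fusion frame structure to the tight-frame constants computed earlier, which is the viewpoint reused in the subsequent erasure result (Theorem~\ref{thm:fusionerasure}).
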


The following proposition, adapted from~\cite[Corollary 13.2]{casazza2012} to our setting, establishes a connection between the tight frame property of $\mathcal{R}_{p,N}$ and the tight fusion frame structure of the Ramanujan subspaces \( S_{p,q_i} \), for \( i \in \mathcal{I}_K \).  For brevity, the proof is omitted.
\begin{prop}\label{prop:fusion}
Let the collection $\{L_{pk}c_{q_i}\}_{k \in \mc J_{d}}$ be a tight frame for \(S_{p,q_i}\) with frame bound $A$ for each \(i \in \mc I_K\). Then, the following conditions are equivalent.
	\begin{itemize}
		\item [$(i)$] The family $\left\{S_{p,q_i}\right\}_{i\in \mc I_K}$ is a tight fusion frame for \(\ell^2(\mathbb Z_N)\) with  bound $C$.
		\item [$(ii)$] The system $\mc R_{p,N}$ is a tight frame for \(\ell^2(\mathbb Z_N)\) with  bound $AC$.
	\end{itemize}
\end{prop}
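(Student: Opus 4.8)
The plan is to reduce both conditions to a single identity that relates the total analysis energy of $\mc R_{p,N}$ to the fusion-frame quantities $\|P_{S_{p,q_i}}(x)\|^2$, and then read off the equivalence directly. The entire argument rests on one elementary observation about tight frames for subspaces, after which both implications are formal rearrangements.

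First I would record the key collapse identity. For each fixed $i \in \mc I_K$, every frame vector $L_{pk}c_{q_i}$ lies in the subspace $S_{p,q_i}$, so by self-adjointness of the orthogonal projection $P_{S_{p,q_i}}$ we have
\[
\langle x, L_{pk}c_{q_i}\rangle = \langle P_{S_{p,q_i}}(x),\, L_{pk}c_{q_i}\rangle, \quad x \in \ell^2(\mathbb{Z}_N),\ k \in \mc J_d.
\]
Since $\{L_{pk}c_{q_i}\}_{k \in \mc J_d}$ is by hypothesis a tight frame for $S_{p,q_i}$ with bound $A$, applying the tight-frame identity to the vector $P_{S_{p,q_i}}(x) \in S_{p,q_i}$ yields
\[
\sum_{k \in \mc J_d} |\langle x, L_{pk}c_{q_i}\rangle|^2 = \sum_{k \in \mc J_d} |\langle P_{S_{p,q_i}}(x), L_{pk}c_{q_i}\rangle|^2 = A\,\|P_{S_{p,q_i}}(x)\|^2.
\]
Summing over all $i \in \mc I_K$ then expresses the analysis energy of the full system $\mc R_{p,N}$ as
\[
\sum_{i \in \mc I_K}\sum_{k \in \mc J_d} |\langle x, L_{pk}c_{q_i}\rangle|^2 = A \sum_{i \in \mc I_K} \|P_{S_{p,q_i}}(x)\|^2, \quad x \in \ell^2(\mathbb{Z}_N).
\]

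With this identity in hand, the equivalence is immediate. If $(i)$ holds, so that $\sum_{i \in \mc I_K}\|P_{S_{p,q_i}}(x)\|^2 = C\|x\|^2$ for all $x$, then the displayed identity gives $\sum_{i,k}|\langle x, L_{pk}c_{q_i}\rangle|^2 = AC\|x\|^2$, which is exactly the statement that $\mc R_{p,N}$ is a tight frame with bound $AC$, i.e.\ $(ii)$. Conversely, if $(ii)$ holds, the same identity reads $A\sum_{i}\|P_{S_{p,q_i}}(x)\|^2 = AC\|x\|^2$, and dividing by $A > 0$ recovers $(i)$. The only point requiring care — and the closest thing to an obstacle — is justifying the collapse to the projection in the key identity: namely, that a tight-frame bound for the subspace $S_{p,q_i}$ transfers verbatim to an expression in $P_{S_{p,q_i}}(x)$ as $x$ ranges over all of $\ell^2(\mathbb{Z}_N)$. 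This is precisely where the membership $L_{pk}c_{q_i} \in S_{p,q_i}$ (established in Theorem~\ref{thm:basis} for $p=2$ and trivially for $p=1$) and the idempotence and self-adjointness of $P_{S_{p,q_i}}$ are used; every other step is a direct interchange of finite sums.
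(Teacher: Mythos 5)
Your proof is correct. The paper itself offers no written proof of this proposition (it is stated as adapted from \cite[Corollary 13.2]{casazza2012} and the proof is omitted), and your argument --- collapsing $\langle x, L_{pk}c_{q_i}\rangle$ to $\langle P_{S_{p,q_i}}(x), L_{pk}c_{q_i}\rangle$ via self-adjointness of the projection and membership $L_{pk}c_{q_i}\in S_{p,q_i}$, then invoking the local tight-frame identity with the common bound $A$ and summing over $i\in\mc I_K$ --- is precisely the standard argument underlying that citation, so it supplies exactly the proof the paper leaves implicit.
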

\begin{proof}[Proof of Theorem \ref{thm:fusion}]
We first prove part~(i). By Theorem~\ref{thm:tight}(i), the system \(\mathcal{R}_{1,N}\) forms a tight frame for \(\ell^2(\mathbb{Z}_N)\) with frame bound \(N^2\). Consequently, Lemma~\ref{lem:frameunion} implies that each individual collection \(\left\{L_{k}c_{q_i}\right\}_{k \in \mathcal{J}_N}\) forms a tight frame for the subspace \(S_{1,q_i}\)  with the same frame bound \(N^2\).  Since \(AC = N^2\) and \(A = N^2\), Proposition~\ref{prop:fusion} ensures that the family \(\left\{S_{1,q_i}\right\}_{i\in \mc I_K}\) forms a tight fusion frame for \(\ell^2(\mathbb{Z}_N)\) with frame bound $C=1$. Hence, \(\left\{S_{1,q_i}\right\}_{i\in \mc I_K}\) is a Parseval fusion frame for \(\ell^2(\mathbb{Z}_N)\), completing the proof of part~(i).

The proof of part~(ii) follows analogously by applying Theorem~\ref{thm:tight}(ii), Lemma~\ref{lem:frameunion}, and Proposition~\ref{prop:fusion}.
\end{proof}
As previously discussed, if the frame system $\mathcal{R}_{p,N}$ is robust to $k$ erasures, then each individual collection $\{L_{pk}c_{q_i}\}_{k\in \mathcal{J}_d}$, for $i \in \mathcal{I}_K$, must also be robust to $k$ erasures. In light of this, Theorems~\ref{thm:erasure1} and~\ref{thm:erasure2} (for $p \in \{1,2\}$) provide sufficient conditions under which the collection $\{L_{pk}c_{q_i}\}_{k\in \mathcal{J}_d}$ remain robust to one or two erasures. The following result, motivated by~\cite[Theorem 4.1]{casazza2008}, builds upon these conditions and shows that the deletion of a prescribed number of local frame vectors from each individual collection preserves the fusion frame property.
\begin{thm}\label{thm:fusionerasure}
	Let \( L_i \subset \mathcal{J}_d \) for each \( i \in \mathcal{I}_K \). For the divisors $p$ and $q_i$ of $N$, define the subspaces
	\[
	\widetilde{S}_{p,q_i} := \operatorname{span} \left\{ L_{pk} c_{q_i} : k \in \mathcal{J}_d \setminus L_i \right\}, \quad i \in \mathcal{I}_K.
	\]
	Then the following statements hold:
	
	\medskip
	
	\noindent (a) Single erasure case: Assume \( \# L_i \leq 1 \) for all \( i \in \mathcal{I}_K \).
	\begin{itemize}
		\item[$(i)$] If \( p = 1 \) and \( N \geq 2 \), then the family \( \{ \widetilde{S}_{1,q_i} \}_{i \in \mathcal{I}_K} 
		\) forms a fusion frame for \( \ell^2(\mathbb{Z}_N) \) 
		
		\item[$(ii)$] If \( p = 2 \),  \( N \geq 2 \),   \( d \) is odd,  then the family \( \{ \widetilde{S}_{2,q_i} \}_{i \in \mathcal{I}_K} \) forms a fusion frame for \( \ell^2(\mathbb{Z}_N) \). 
	\end{itemize}
	
	\medskip
	
	\noindent (b) Double erasure case: Assume \( \# L_i \leq 2 \) for all \( i \in \mathcal{I}_K \).
	\begin{itemize}
		\item[$(i)$] If \( p = 1 \) and \( N - 1 \geq 2\phi(N) \),  then the family \( \{ \widetilde{S}_{1,q_i} \}_{i \in \mathcal{I}_K} \) forms a fusion frame for \( \ell^2(\mathbb{Z}_N) \). 
			
		\item[$(ii)$] If \( p = 2 \),  \( d \) is odd, and \( d - 1 \geq 2\phi(N) \), then the family \( \{ \widetilde{S}_{2,q_i} \}_{i \in \mathcal{I}_K} \) forms a fusion frame for \( \ell^2(\mathbb{Z}_N) \). 
	\end{itemize}

Moreover, in all cases above, the collections \( \{L_{pk} c_{q_i}\}_{k \in \mathcal{J}_d \setminus L_i} \) form frames for $S_{p,q_i}$ for $i \in \mc I_K$ for respective values of $p$, and the associated fusion frame bounds are given by
\[
\frac{\min\limits_{i \in \mathcal{I}_K} A_{p,i}}{p d^2} \quad \text{and} \quad 1,
\]
where \( A_{p,i} \) denotes the lower frame bound of \( \{L_{pk} c_{q_i}\}_{k \in \mathcal{J}_d \setminus L_i} \).
\end{thm}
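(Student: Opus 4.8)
The plan is to reduce the statement to a blockwise (local) analysis over the orthogonal decomposition $\ell^2(\mathbb{Z}_N) = S_{p,q_1} \oplus \cdots \oplus S_{p,q_K}$, which holds for $p=2$ by Theorem~\ref{thm:basis} and for $p=1$ by its stated analogue. The first and principal step is to verify that, under each erasure budget, the surviving local collection $\{L_{pk}c_{q_i}\}_{k\in\mc J_d\setminus L_i}$ still contains $\phi(q_i)$ consecutive shifts of $c_{q_i}$, and hence a basis of $S_{p,q_i}$. For the single-erasure cases (a)(i)--(a)(ii), $\#L_i\le 1$ together with $N-1\ge\phi(q_i)$ (respectively $d-1\ge\phi(q_i)$) for every divisor $q_i$ shows, exactly as in the proof of Theorem~\ref{thm:erasure1}, that deleting one index cannot eliminate every run of $\phi(q_i)$ consecutive shifts. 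For the double-erasure cases (b)(i)--(b)(ii), I would transcribe the two-subcase split from the proof of Theorem~\ref{thm:erasure2} (comparing the gap between the two erased indices with $\phi(q_i)$); the hypotheses $N-1\ge 2\phi(N)$ (respectively $d-1\ge 2\phi(N)$), together with $\phi(q_i)\le\phi(N)$, guarantee a surviving run of $\phi(q_i)$ consecutive shifts in each block. By Theorem~\ref{thm:basis} (and \cite[Theorem~4]{vaidyanathan2014ramanujan1} when $p=1$) such a run is a basis, so $\widetilde S_{p,q_i}=S_{p,q_i}$ and the surviving collection is a frame for $S_{p,q_i}$ with some lower bound $A_{p,i}>0$.

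The second step is to assemble these local frames into a fusion frame and to read off the bounds. Since each $\widetilde S_{p,q_i}$ equals $S_{p,q_i}$ and the $S_{p,q_i}$ are mutually orthogonal with $\bigoplus_i S_{p,q_i}=\ell^2(\mathbb{Z}_N)$, Proposition~\ref{prop:fusion} (in the spirit of \cite[Theorem~4.1]{casazza2008}) yields that $\{\widetilde S_{p,q_i}\}_{i\in\mc I_K}$ is a fusion frame. For the explicit bounds I would use that deleting vectors only decreases the analysis sum, so on each block the surviving frame inherits the upper bound $pd^2$ of the pre-erasure tight frame of Theorem~\ref{thm:tight}:
\begin{equation*}
A_{p,i}\,\|P_{S_{p,q_i}}x\|^2 \;\le\; \sum_{k\in\mc J_d\setminus L_i}\big|\langle x, L_{pk}c_{q_i}\rangle\big|^2 \;\le\; pd^2\,\|P_{S_{p,q_i}}x\|^2 .
\end{equation*}
The upper fusion bound is then immediate from the Parseval identity $\sum_i\|P_{S_{p,q_i}}x\|^2=\|x\|^2$ of the orthogonal decomposition, giving $1$. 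The lower fusion bound follows from the relationship of Proposition~\ref{prop:fusion}: summing the left inequality over $i$ shows that the collective surviving system $\bigcup_i\{L_{pk}c_{q_i}\}_{k\in\mc J_d\setminus L_i}$ has lower frame bound $\min_i A_{p,i}$, and dividing by the common local upper bound $pd^2$ produces $\min_i A_{p,i}/(pd^2)$.

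The survival counts in the first step are routine, being direct transcriptions of Theorems~\ref{thm:erasure1} and~\ref{thm:erasure2} applied block by block. The delicate point, which I expect to be the main obstacle, is the bound extraction in the second step: after erasure the surviving local frames are in general no longer tight, so Proposition~\ref{prop:fusion} cannot be applied in its tight form, and one must track the non-uniform lower bounds $A_{p,i}$ against the uniform upper bound $pd^2$. Care is needed to use the right normalization for each side---the Parseval identity for the upper bound (collapsing it to exactly $1$) and the collective-to-fusion division for the lower bound (leaving it controlled by the weakest surviving block $\min_i A_{p,i}$).
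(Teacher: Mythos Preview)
Your proposal is correct and closely parallels the paper's argument: both rely on Theorems~\ref{thm:erasure1}--\ref{thm:erasure2} to establish that each surviving local collection $\{L_{pk}c_{q_i}\}_{k\in\mc J_d\setminus L_i}$ is a frame for $S_{p,q_i}$, and both extract the fusion lower bound $\min_i A_{p,i}/(pd^2)$ by sandwiching the surviving local frame sum between $A_{p,i}\|P_{S_{p,q_i}}x\|^2$ and $pd^2\|P_{\widetilde S_{p,q_i}}x\|^2$ and then invoking the Parseval decomposition of Theorem~\ref{thm:fusion}.

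The one genuine difference is your explicit observation that $\widetilde S_{p,q_i}=S_{p,q_i}$ (because the surviving collection spans $S_{p,q_i}$). The paper never states this, and instead introduces the decomposition $y_i=u_i+v_i$ with $u_i=P_{\widetilde S_{p,q_i}}y_i$ and $v_i\in S_{p,q_i}\ominus\widetilde S_{p,q_i}$, then uses $\langle y_i,L_{pk}c_{q_i}\rangle=\langle u_i,L_{pk}c_{q_i}\rangle$ for surviving $k$. Under the theorem's hypotheses $v_i=0$ always, so this decomposition is vacuous here; your route is therefore more direct. In fact your observation shows something stronger than the stated lower bound: since $\widetilde S_{p,q_i}=S_{p,q_i}$, the family $\{\widetilde S_{p,q_i}\}$ is literally the Parseval fusion frame of Theorem~\ref{thm:fusion}, so both fusion bounds equal $1$, and the stated lower bound $\min_i A_{p,i}/(pd^2)\le 1$ is automatically valid but not sharp. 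The paper's $u_i,v_i$ machinery (borrowed from \cite{casazza2008}) would be needed only in a setting where the surviving vectors fail to span $S_{p,q_i}$, which does not occur under the present hypotheses.
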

\begin{proof}
We begin with proving (a). Let \( x \in \ell^2(\mathbb{Z}_N) \) be arbitrary. 
	For each \( i \in \mathcal{I}_K \), set \( y_i := P_{S_{1,q_i}} x \). Since \( \widetilde{S}_{1,q_i} \subseteq S_{1,q_i} \), we decompose
	\[
	y_i = u_i + v_i, \quad \text{where } u_i := P_{\widetilde{S}_{1,q_i}} y_i \in \widetilde{S}_{1,q_i}, \quad v_i \in S_{1,q_i} \ominus \widetilde{S}_{1,q_i}.
	\]
	For any \( k \in \mathcal{J}_N \setminus L_i \), we have \( L_k c_{q_i} \in \widetilde{S}_{1,q_i} \), and thus
	\[
	\langle y_i, L_k c_{q_i} \rangle = \langle u_i, L_k c_{q_i} \rangle,
	\]
	since \( v_i \perp \widetilde{S}_{1,q_i} \). Using the fact that the  collection \( \{L_k c_{q_i}\}_{k \in \mathcal{J}_N} \) forms a tight frame for \( S_{1,q_i} \) with bound \( N^2 \) (due to Theorem \ref{thm:tight}(i) and Lemma \ref{lem:frameunion}), we estimate:
	\begin{equation*} \label{eq:u_i-energy}
		\sum_{k \in \mathcal{J}_N \setminus L_i} \left| \langle y_i, L_k c_{q_i} \rangle \right|^2 
		= \sum_{k \in \mathcal{J}_N \setminus L_i} \left| \langle u_i, L_k c_{q_i} \rangle \right|^2 
		\leq \sum_{k \in \mathcal{J}_N} \left| \langle u_i, L_k c_{q_i} \rangle \right|^2 = N^2 \|u_i\|^2.
	\end{equation*}
Summing over \( i \in \mathcal{I}_K \) and using the fact that for $N\geq 2$ and $\#_{L_i}\leq 1$, the collection \( \{L_{k} c_{q_i}\}_{k \in \mathcal{J}_N \setminus L_i} \) forms a frame for $S_{1,q_i}$ for $i \in \mc I_K$ due to Theorem \ref{thm:erasure1}(i) with bound \( A_{1,i} \), we obtain:
	\[
	N^2 \sum_{i=1}^K \| P_{\widetilde{S}_{1,q_i}} x \|^2 
	= N^2 \sum_{i=1}^K \| u_i \|^2 
	\geq \sum_{i=1}^K \sum_{k \in \mathcal{J}_N \setminus L_i} \left| \langle y_i, L_k c_{q_i} \rangle \right|^2\geq \sum_{i=1}^K A_{1,i} \| P_{S_{1,q_i}} x \|^2 \geq \left( \min_{i \in \mathcal{I}_K} A_{1,i} \right) \sum_{i=1}^K \| P_{S_{1,q_i}} x \|^2.
	\]
Finally, since \( \{ S_{1,q_i} \}_{i \in \mathcal{I}_K} \) forms a Parseval fusion frame for \( \ell^2(\mathbb{Z}_N) \) by Theorem~\ref{thm:fusion}(i), we conclude that
	\(
	\sum_{i=1}^K \| P_{S_{1,q_i}} x \|^2 = \|x\|^2,
	\)
	and thus
	\[
	\sum_{i=1}^K \| P_{\widetilde{S}_{1,q_i}} x \|^2 \geq \frac{ \min_{i \in \mathcal{I}_K} A_{1,i} }{N^2} \|x\|^2.
	\]
The upper frame bound \( 1 \) is immediate, since projections are norm-contracting and \( \widetilde{S}_{1,q_i} \subseteq S_{1,q_i} \). This proves (a)(i).
	
When \( p = 2 \), the same argument applies, as the collection \( \{L_{2k} c_{q_i}\}_{k \in \mathcal{J}_d} \) forms a tight frame for \( S_{2,q_i} \) with frame bound \( 2d^2 \) (Theorem~\ref{thm:tight}(ii) and Lemma~\ref{lem:frameunion}). Moreover, under the assumptions \( N \geq 2 \), odd \( d \), and \( \#_{L_i} \leq 1 \), the collection \( \{L_{2k} c_{q_i}\}_{k \in \mathcal{J}_d \setminus L_i} \) also forms a frame for \( S_{2,q_i} \) for all \( i \in \mathcal{I}_K \), by Theorem~\ref{thm:erasure1}(ii). This establishes part~(a)(ii).

The proof of part~(b) follows analogously by applying Theorem~\ref{thm:tight}, Lemma~\ref{lem:frameunion}, Theorem~\ref{thm:erasure2}, and Proposition~\ref{thm:fusion}.
\end{proof}

	In the following section, we introduce an uncertainty principle for signal representations in $\ell^2(\mathbb{Z}_N)$, comparing the canonical basis $\{e_n\}_{n \in \mathbb Z_N}$ with a tight frame $\mc R_{p,N}.$ We further explore the utility of this principle in addressing practical challenges, such as recovering signals with missing information  and signal denoising.

		\section{Uncertainty Principle and Signal recovery}\label{S3}
		%In the case of finite abelian groups, localization is expressed through the cardinality of the support of the signal. Donoho and Huo \cite{donoho2001uncertainty} provided an uncertainty principle concerning pairs of representations of vectors in $\mathbb{R}^n$ using two different sets of orthonormal bases. It states that the sum of nonzero coefficients in the representation of a vector $x$ in terms of two orthonormal bases for $\mathbb{R}^n$ is bounded below by a certain bound. 
		%The theory of the uncertainty principle related to pairs of bases and its applications has been developed by many authors \cite{donoho2001uncertainty, elad2002generalized, ghobber2011uncertainty,  ricaud2014survey}. An uncertainty principle for the case of finite abelian groups is provided in \cite{matusiak2004donoho, krahmer2006uncertainty, gumber2018uncertainty}. 

		Uncertainty principles describe limits on the simultaneous time- and frequency-localization of signals.
		A signal that is well-localized in frequency can be only so well-localized in time, and vice versa.  In this section, we present an uncertainty principle for pairs of signal representations in $\ell^2(\mathbb{Z}_N)$, with respect to the canonical basis $\{e_n\}_{n \in \mathbb{Z}_N}$ of $\ell^2(\mathbb{Z}_N)$ and a tight frame $\mc R_{p,N}$ involving Ramanujan sums. 
		We show that our uncertainty principle has applications in scenarios where some samples of the signal are lost during transmission from the analysis phase to the synthesis phase of a filter bank and in signal denoising.  Our signal recovery approach primarily focuses on utilizing filter banks with Ramanujan sums as filters and using their properties in signal recovery algorithms, which will be illustrated in Section~\ref{S4}.
 		
		In practical scenarios, some samples of the signal may be lost during its transmission. This is one of the common problems in signal processing and has been discussed by many researchers (see, for references, \cite{candes2006stable, montefusco2009nonlinear, tropp2007signal}). In our case, we assume that the samples $\{(x\, \asterisk \,c_{q_i})(pk) : (k,i) \in  M\}$ are lost for some fixed $M \subset \mc J_{d} \times \mc I_{\mc K}.$ We then look for the conditions under which the original signal $x$ can be recovered completely from the remaining samples.
		\begin{remar}
			Note that, if $N=kq$ for some positive integer $k,$ then by using the symmetric property of the Ramanujan sums, i.e., $c_q(n) = c_q(-n), n \in \mathbb Z_N,$  we get $$\tilde{c_q}(n)=c_q(-n)=c_q(N-n)=c_q(kq-n)=c_{q}(n),\,\,\, n \in \mathbb Z_N.$$
			Thus the collection $\{(x\, \asterisk \,c_{q_i})(pk) : (k,i) \in  \mc J_{d} \times \mc I_{\mc K}\},$ without the involution of $c_{q_i}, i \in \mc I_K,$ is the output from the filter bank based on Ramanujan sums $c_{q_1}, c_{q_2}, \hdots, c_{q_K}.$  
		\end{remar}
		
		We define \(M^c := (\mathcal{J}_{d} \times \mathcal{I}_{K}) \setminus M\) for any $M \subset \mc J_d \times \mc I_{K}$ and use \(\#_S\) to denote the cardinality of the set \(S\). 	We focus on the following problems:
		
		\textbf{Problem 1:} Under the assumption that $\mc R_{p,N}$ is a tight frame for $\ell^2(\mathbb Z_N),$  then under what conditions any signal $x\in \ell^2(\mathbb Z_N)$ can be reconstructed uniquely from the  truncated sum:
		\begin{equation}\label{eq:TMx}
			\mc T_{\mc J}x:=\frac{1}{A}\sum_{(k,i) \in \mc J}(x\, \asterisk \,c_{q_i})(pk)L_{pk}c_{q_i},
		\end{equation}
		where 	$\mc J \subset \mc J_d \times \mc I_{K}$ and $A$ is the tight-frame bound of  $\mc R_{p,N}.$
		
		Note that for a tight frame $\mc R_{p,N},$   a signal $x \in \ell^2(\mathbb Z_N)$ has the representation: $$x=\frac{1}{A}\sum_{i \in \mc I_K}\sum_{k \in \mc J_d}(x\, \asterisk \,c_{q_i})(pk)L_{pk}c_{q_i},$$ and thus \eqref{eq:TMx} is the truncated version of this representation. We prove the following result: 
	\begin{thm}\label{thm: uncertaintyrecovery}
Let \( x \in \ell^2(\mathbb{Z}_N) \) be a signal supported on a set \( \mc C \subseteq \mathbb{Z}_N \), and let \( \mc J \subset \mc J_d \times \mc I_K \) be such that the samples \( \{(x \ast c_{q_i})(pk) : (k,i) \in \mc J^c \} \) are lost or unavailable. Suppose that the collection \( \mc R_{p,N} \) (defined in \eqref{eq:rpk}) forms a tight frame for \( \ell^2(\mathbb{Z}_N) \). If
		\begin{equation}\label{uncertrecovcondtn}
			2 \#_{\mc J^c} \#_{\mc C} < p \left( \frac{d}{\phi(N)} \right)^2,
		\end{equation}
		then \( x \) can be uniquely recovered from \( \mc T_{\mc J} x \) (defined in~\eqref{eq:TMx}).
	\end{thm}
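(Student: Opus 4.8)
The plan is to recast the statement as a uniqueness (injectivity) problem and then close it with an uncertainty-type inequality in the spirit of Donoho--Stark. I would argue that $x$ fails to be uniquely recoverable from $\mc T_{\mc J}x$ exactly when there exist two distinct signals $x_1,x_2$, each supported on a set of size at most $\#_{\mc C}$, with $\mc T_{\mc J}x_1=\mc T_{\mc J}x_2$. Writing $z:=x_1-x_2$, this is equivalent to the existence of a nonzero $z\in\ell^2(\mathbb Z_N)$ whose support has size at most $2\#_{\mc C}$ and which satisfies $\mc T_{\mc J}z=0$. Since $\mc R_{p,N}$ is a tight frame, the truncation operators obey $\mc T_{\mc J}+\mc T_{\mc J^c}=\mathrm{Id}$, so $\mc T_{\mc J}z=0$ forces $z=\mc T_{\mc J^c}z$; that is, $z$ lies in the span of $\{L_{pk}c_{q_i}:(k,i)\in\mc J^c\}$.

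Next I would extract an energy identity from this self-reproducing formula. Pairing $z=\frac{1}{A}\sum_{(k,i)\in\mc J^c}\langle z,L_{pk}c_{q_i}\rangle\,L_{pk}c_{q_i}$ with $z$ yields
\[
A\|z\|^2=\sum_{(k,i)\in\mc J^c}\bigl|\langle z,L_{pk}c_{q_i}\rangle\bigr|^2,
\]
where $A$ is the tight-frame bound. Here I would invoke Theorem~\ref{thm:tight} together with the observation that in both admissible regimes the bound has the unified form $A=pd^2$ (namely $N^2$ for $p=1$ and $2d^2$ for $p=2$); this is precisely what produces the factor $p\,(d/\phi(N))^2$ in the hypothesis.

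The third step bounds each frame coefficient through the support of $z$ and the uniform size bound on Ramanujan sums. Since $z$ is supported on a set of size at most $2\#_{\mc C}$, and $|c_{q_i}(m)|\le\phi(q_i)\le\phi(N)$ for every $m$ (Proposition~\ref{prop:properties}(i), together with $\phi(q_i)\le\phi(N)$ since $q_i\mid N$), the Cauchy--Schwarz inequality gives
\[
\bigl|\langle z,L_{pk}c_{q_i}\rangle\bigr|\le\sum_{n\in\operatorname{supp}(z)}|z(n)|\,|c_{q_i}(n-pk)|\le\phi(N)\sqrt{2\#_{\mc C}}\;\|z\|.
\]
Substituting this into the energy identity and summing the $\#_{\mc J^c}$ terms yields $A\|z\|^2\le 2\#_{\mc J^c}\#_{\mc C}\,\phi(N)^2\|z\|^2$. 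For $z\neq0$ this forces $pd^2=A\le 2\#_{\mc J^c}\#_{\mc C}\,\phi(N)^2$, i.e.\ $2\#_{\mc J^c}\#_{\mc C}\ge p\,(d/\phi(N))^2$, which contradicts \eqref{uncertrecovcondtn}. Hence $z=0$, establishing uniqueness and therefore recoverability of $x$ from $\mc T_{\mc J}x$.

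I expect the main obstacle to be matching the exact threshold $p\,(d/\phi(N))^2$ rather than the argument's structure: this hinges on recognizing the unified bound $A=pd^2$ across the two tight-frame cases of Theorem~\ref{thm:tight}, and on the sharp uniform estimate $|c_{q_i}(m)|\le\phi(N)$, which in turn rests on the monotonicity $\phi(q_i)\le\phi(N)$ for divisors of $N$. The factor $2$ in \eqref{uncertrecovcondtn} is accounted for by passing from a single prescribed support to the difference of two candidate signals, whose joint support can be as large as $2\#_{\mc C}$; if one instead fixes the support set $\mc C$ in advance, the identical computation yields the slightly stronger conclusion under the weaker hypothesis $\#_{\mc J^c}\#_{\mc C}<p\,(d/\phi(N))^2$.
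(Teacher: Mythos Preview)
Your argument is correct and follows the same overall architecture as the paper---reduce to the difference $z=x_1-x_2$, bound its support by $2\#_{\mc C}$, and derive a contradiction from the hypothesis---but the execution differs. The paper first isolates a standalone uncertainty principle (Theorem~\ref{thm:thmuncertaintycq}) of the form $S_xB_x\ge p(d/\phi(N))^2$ and then applies it to $z$; it also supplies an explicit $\ell_1$-minimization recovery procedure via a Logan-type lemma (Lemma~\ref{loganlem}). You instead obtain the needed inequality in one stroke from the self-reproducing identity $z=\mc T_{\mc J^c}z$: pairing with $z$ gives $A\|z\|^2=\sum_{\mc J^c}|\langle z,L_{pk}c_{q_i}\rangle|^2$, and the estimate $|\langle z,L_{pk}c_{q_i}\rangle|\le\phi(N)\sqrt{2\#_{\mc C}}\,\|z\|$ closes the argument. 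This is cleaner for the uniqueness claim and avoids the double Cauchy--Schwarz chain in the paper's uncertainty proof, at the cost of not exhibiting a concrete recovery algorithm (in finite dimensions uniqueness already implies recoverability, so the statement is fully covered). One small point: you justify $A=pd^2$ only through the explicit cases of Theorem~\ref{thm:tight}, whereas the paper shows (Proposition~\ref{prop:tight}) that $A=pd^2$ follows from the tight-frame hypothesis alone via the trace identity $\sum_{i,k}\|L_{pk}c_{q_i}\|^2=A\cdot N$, $\|c_{q_i}\|^2=N\phi(q_i)$, and Gauss's theorem $\sum_i\phi(q_i)=N$; it would strengthen your write-up to note this, since the theorem's hypothesis does not restrict $p$ to $\{1,2\}$.
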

		
		Next, we present an application of our uncertainty principle in denoising.  Signal denoising is a widely studied problem in signal processing \cite{  elad2006image, protter2008image, kulkarni2023periodicity}. It refers to the process of removing or reducing noise from a signal to recover the original or a cleaner version of the signal,  which might be corrupted due to external interference, transmission errors, or system imperfections.  
		
		We begin with the  assumption that for some \( \mc M \subset \mc J_d \times \mc I_{K} \), the signal \( x \) belongs to the set:
		\begin{equation}\label{eq:SM}
			\mc S_{\mc M} := \left\{ h \in \ell^2(\mathbb Z_N) \mid (h \, \asterisk \, c_{q_i})(pk) = 0 \text{ for } (k,i) \in \mc M^c \right\}.
		\end{equation}Suppose the signal $x$ is sent to a receiver that captures it accurately, except over a set $\mc N \subset \mathbb Z_N$ where the signal is corrupted by noise $\eta.$  Then the received signal $y$ for  $k \in \mathbb Z_N$ is given by
		\begin{equation}\label{eq:noisysignal}
			y(k)=\begin{cases} 
				x(k)+n(k),& \text{ $k \in \mc N,$} \\
				x(k),	 & \text{$k \in \mc N^c,$}  
			\end{cases}
		\end{equation}
		where $\mc N^c:=\mathbb Z_N\backslash \mc N.$
		Our goal is to reconstruct \( x \) from the noisy signal \( y \), assuming some sparsity on the noise vector \( \eta \), i.e.,  the noise components \( \eta(k), k \in \mathbb Z_N\), are zero outside a given set.  
		
		\textbf{Problem 2}: Under what conditions is it possible to reconstruct any signal \( x \in \mc S_{\mc M} \) for some fixed \( \mc M \subset \mc J_d \times \mc I_{K} \), from the noisy signal \( y = x + \eta \) with some sparsity on the noise vector \( \eta \)?
		
In many practical situations, such as sensor failures, impulsive disturbances, or transmission errors, the noise typically affects only a few locations in the signal. This motivates modeling the noise vector \( \eta \) as sparse. See, for example, \cite{cheriandenoising2011}, where recovery under sparse noise assumptions is systematically analyzed. Problem 2 is addressed by the following theorem:
\begin{thm}\label{thm:recovfromnoisy}
	Let 
		\( x \in \mc S_{\mc M} \), as defined in~\eqref{eq:SM}, for some fixed set \( \mc M \subset \mc J_d \times \mc I_K \), and let \( \mc N = \{ m \in \mathbb{Z}_N : \eta(m) \neq 0 \} \) denote the support of the noise vector \( \eta \). Suppose that the collection \( \mc R_{p,N} \) (defined in \eqref{eq:rpk}) forms a tight frame for \( \ell^2(\mathbb{Z}_N) \). If
		\begin{equation}\label{condtn:2nmnt}
			2 \#_{\mc M} \#_{\mc N} < p \left( \frac{d}{\phi(N)} \right)^2,
		\end{equation}
		then the signal \( x \) can be exactly recovered from the noisy observation \( y = x + \eta \).
	\end{thm}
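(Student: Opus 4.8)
The plan is to reduce the exact-recovery assertion to a uniqueness statement and then resolve that statement with a Donoho--Stark type uncertainty estimate for the pair consisting of the canonical basis $\{e_n\}_{n\in\mathbb Z_N}$ and the tight frame $\mc R_{p,N}$. \emph{Reduction.} Recovering $x$ from $y=x+\eta$ amounts to showing that the signal component is uniquely determined among all admissible decompositions; that is, if $x_1,x_2\in\mc S_{\mc M}$ satisfy $y=x_1+\eta_1=x_2+\eta_2$ with $\#_{\supp\eta_1},\#_{\supp\eta_2}\le\#_{\mc N}$, then necessarily $x_1=x_2$. Put $z:=x_1-x_2$. Since each coefficient functional $h\mapsto(h\,\asterisk\,c_{q_i})(pk)$ is linear, the set $\mc S_{\mc M}$ is a subspace, so $z\in\mc S_{\mc M}$; equivalently the frame coefficients $(z\,\asterisk\,c_{q_i})(pk)$ vanish for all $(k,i)\in\mc M^c$. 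At the same time $z=\eta_2-\eta_1$, so its canonical support $\mc S$ satisfies $\mc S\subseteq\supp\eta_1\cup\supp\eta_2$ and hence $\#_{\mc S}\le 2\#_{\mc N}$. It remains to force $z=0$.

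\emph{Uncertainty estimate.} I claim that any nonzero $z$ whose $\mc R_{p,N}$-coefficients are supported on $\mc M$ and whose canonical support is $\mc S$ obeys $\#_{\mc M}\,\#_{\mc S}\ge p\,(d/\phi(N))^2$. Because $\mc R_{p,N}$ is tight with bound $A$ (by hypothesis, forcing $p\in\{1,2\}$), we have $A\|z\|^2=\sum_{(k,i)\in\mc M}|\langle z,L_{pk}c_{q_i}\rangle|^2$, the sum running only over $\mc M$ since the remaining coefficients vanish. Expanding $\langle z,L_{pk}c_{q_i}\rangle=\sum_{n\in\mc S}z(n)\,\overline{c_{q_i}(n-pk)}$, using the magnitude bound $|c_{q_i}(m)|\le\phi(q_i)\le\phi(N)$ (the first a standard bound for Ramanujan sums, cf.\ Proposition~\ref{prop:properties}(i); the second from $q_i\mid N\Rightarrow\phi(q_i)\mid\phi(N)$), and applying Cauchy--Schwarz over the $\#_{\mc S}$ active indices gives $|\langle z,L_{pk}c_{q_i}\rangle|^2\le\phi(N)^2\,\#_{\mc S}\,\|z\|^2$. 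Summing the $\#_{\mc M}$ surviving terms yields $A\|z\|^2\le\#_{\mc M}\,\phi(N)^2\,\#_{\mc S}\,\|z\|^2$, whence $\#_{\mc M}\#_{\mc S}\ge A/\phi(N)^2$ for $z\ne0$. By Theorem~\ref{thm:tight} the tight-frame bound is $A=N^2=d^2$ when $p=1$ and $A=2d^2$ when $p=2$, i.e.\ $A=p\,d^2$ in both admissible cases, so $A/\phi(N)^2=p\,(d/\phi(N))^2$, proving the claim.

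\emph{Conclusion.} Applying the estimate to $z$ and using $\#_{\mc S}\le 2\#_{\mc N}$, a nonzero $z$ would give $2\#_{\mc M}\#_{\mc N}\ge\#_{\mc M}\#_{\mc S}\ge p\,(d/\phi(N))^2$, contradicting the hypothesis~\eqref{condtn:2nmnt}. Hence $z=0$, so $x_1=x_2$, and $x$ is recovered uniquely from $y$. The main obstacle is bookkeeping rather than conceptual: one must pin down the uniform magnitude bound $|c_{q_i}(n)|\le\phi(N)$, identify the correct tight-frame constant $A=p\,d^2$ across the two admissible cases, and correctly account for the factor $2$ produced by merging the two noise supports, so that the resulting threshold lines up exactly with~\eqref{condtn:2nmnt}; the coherence step itself is immediate thanks to the tight-frame identity. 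This argument mirrors the one underlying Theorem~\ref{thm: uncertaintyrecovery}.
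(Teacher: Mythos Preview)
Your argument is correct, and in fact your inline ``uncertainty estimate'' is precisely the product inequality of Theorem~\ref{thm:thmuncertaintycq}, re-derived cleanly with a single Cauchy--Schwarz step together with Lemma~\ref{lem: modcqlem} and the identification $A=p\,d^2$ (which you obtain case-by-case from Theorem~\ref{thm:tight}; the paper records it in general as Proposition~\ref{prop:tight}).

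The route, however, differs from the paper's. The paper does not argue via identifiability; instead it \emph{exhibits} a recovery procedure, the $\ell_1$ program $\tilde x=\arg\min_{x'\in\mc S_{\mc M}}\|y-x'\|_1$, and shows $\tilde x=x$ by invoking Lemma~\ref{loganlem} (a Logan-type $\ell_1$ concentration estimate for elements of $\mc S_{\mc M}$) applied to $\eta$, which is supported on $\mc N$. Thus the key lemma in the paper is an $\ell_1$ bound $\|P_{\mc N}h\|_1<\tfrac12\|h\|_1$ for nonzero $h\in\mc S_{\mc M}$, rather than your $\ell_2$ tight-frame identity. What each buys: your approach is shorter and avoids the $\ell_1$ machinery entirely, but your reduction (``uniqueness among decompositions with noise support at most $\#_{\mc N}$'') presupposes the decoder knows a sparsity budget for the noise. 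The paper's route is constructive in a stronger sense: the $\ell_1$ minimization needs only $\mc M$ as input, and the hypothesis~\eqref{condtn:2nmnt} guarantees the output equals $x$; knowledge of $\#_{\mc N}$ is not required to run the algorithm.
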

		
		Before proving Theorems \ref{thm: uncertaintyrecovery} and  \ref{thm:recovfromnoisy}, we provide the uncertainty principle concerning pairs of representations of signals  with respect to canonical basis $\left\{ e_n \right\}_{n\in \mathbb Z_n}$  and a tight frame $\mc R_{p,N}$ of $\ell^2(\mathbb Z_N).$
		\subsection{ Uncertainty principle using tight frames}\label{sub: uncer}	 For a non-zero signal $x \in \ell^2(\mathbb Z_N),$ we have the following representations in terms of canonical basis $\left\{ e_n \right\}_{n \in \mathbb Z_N}$  and a tight frame $\mc R_{p,N}:$ 
		\begin{equation}
			\label{Sunc: rep}
			x=\sum_{n=0}^{N-1}\alpha_ne_n= 		\frac{1}{A}\sum_{i=1}^K\sum_{m=0}^{d-1}	(x\, \asterisk \,c_{q_i})(pm)L_{pm}c_{q_i},
		\end{equation}
		where $A$ is the  frame bound of $\mc R_{p,N}.$ Then it is clear that for any $x \in \ell^2(\mathbb Z_N):$ $$\|x\|^2=\sum_{n=0}^{N-1}|\alpha_n|^2=\frac{1}{A}\sum_{i=1}^K\sum_{k=0}^{d-1}|	(x\, \asterisk \,c_{q_i})(pk)|^2,$$	
		where the last equality follows by noting that \(\big(x * c_{q_i}\big)(n) = \big(x * \tilde{c}_{q_i}\big)(n) = \langle x, L_n c_{q_i} \rangle\) for all \(n \in \mathbb{Z}\) and \(i \in \mc I_K.\)  We have the following result:
	\begin{thm}\label{thm:thmuncertaintycq}
Let the collection \( \mc R_{p,N} \) (defined in \eqref{eq:rpk}) form a tight frame for \( \ell^2(\mathbb{Z}_N) \). Then, for any signal \( x \in \ell^2(\mathbb{Z}_N) \), the representations given in~\eqref{Sunc: rep} with respect to the tight frame \( \mc R_{p,N} \) and the canonical basis \( \{ e_n \}_{n \in \mathbb{Z}_N} \) yield the following uncertainty inequalities:
		\begin{equation}\label{eq:uncerrelations}
			S_x + B_x \geq \frac{2d\sqrt{p}}{\phi(N)}, \quad \text{and} \quad S_x B_x \geq p \left( \frac{d}{\phi(N)} \right)^2,
		\end{equation}
		where \( S_x = \#\{ (x \ast c_{q_i})(pk) \mid k \in \mc J_d,\; i \in \mc I_K \} \) and \( B_x = \#\{ \alpha_n \mid n \in \mc J_N \} \).
	\end{thm}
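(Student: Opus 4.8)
The plan is to read this as a Donoho--Stark-type uncertainty principle for the pair consisting of the canonical orthonormal basis $\{e_n\}_{n\in\mathbb Z_N}$ and the (unnormalized) tight frame $\mc R_{p,N}$, and to run the classical basis-versus-basis argument with the frame bound $A$ playing the role of the normalization constant. The whole estimate hinges on a single uniform \emph{coherence} bound between the two systems, so I would establish that first.

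First I would compute the cross inner products. Since each $c_q$ is real and even (Proposition~\ref{prop:properties}(ii)), $\langle e_n, L_{pk}c_{q_i}\rangle = \overline{c_{q_i}(n-pk)} = c_{q_i}(n-pk)$, and Proposition~\ref{prop:properties}(i) gives $|c_{q_i}(m)|\le\phi(q_i)$ for all $m$. Because every $q_i$ divides $N$ one has $\phi(q_i)\le\phi(N)$ (indeed $\phi(q_i)\mid\phi(N)$), so the coherence is controlled uniformly by $\phi(N)$, independently of the channel. Writing $T=\{n:\alpha_n\neq0\}$, so that $\#_T=B_x$, and expanding each frame coefficient over the support of $x$, the triangle inequality together with Cauchy--Schwarz then yields
\[
|(x*c_{q_i})(pk)| = \Big|\sum_{n\in T}\alpha_n\,c_{q_i}(n-pk)\Big| \le \phi(N)\sum_{n\in T}|\alpha_n| \le \phi(N)\sqrt{B_x}\,\|x\|
\]
for every index pair $(k,i)$.

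Next I would feed this pointwise bound into the tight-frame Parseval identity recorded just before the statement. Restricting the sum to the support $\Omega$ of the frame coefficients, so that $\#_\Omega=S_x$, and using the frame bound $A=pd^2$---which equals $N^2$ when $p=1$ (since then $N=d$) and $2d^2$ when $p=2$, by Theorem~\ref{thm:tight}---gives
\[
\|x\|^2 = \frac{1}{A}\sum_{(k,i)\in\Omega}|(x*c_{q_i})(pk)|^2 \le \frac{\phi(N)^2\,S_x\,B_x}{pd^2}\,\|x\|^2.
\]
Cancelling $\|x\|^2$ (valid since $x\neq0$) produces the product inequality $S_xB_x\ge p\,(d/\phi(N))^2$, and AM--GM, $S_x+B_x\ge2\sqrt{S_xB_x}$, immediately delivers the additive inequality $S_x+B_x\ge 2d\sqrt{p}/\phi(N)$.

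The Cauchy--Schwarz and AM--GM passages are routine; the step I expect to require the most care is the arithmetic input $\phi(q_i)\le\phi(N)$ for divisors $q_i\mid N$, which is precisely what makes the coherence constant uniform across channels, together with the bookkeeping that the tight-frame constant $A$ takes the common value $pd^2$ in both admissible cases $p\in\{1,2\}$ covered by Theorem~\ref{thm:tight}.
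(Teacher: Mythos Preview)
Your proof is correct and follows essentially the same Donoho--Stark-type strategy as the paper: both rely on the coherence bound $|\langle e_n,L_{pk}c_{q_i}\rangle|=|c_{q_i}(n-pk)|\le\phi(N)$ (the paper packages this as Lemma~\ref{lem: modcqlem}) and on the tight-frame constant $A=pd^2$ (which the paper isolates as Proposition~\ref{prop:tight}, valid under the tight-frame hypothesis alone rather than case-by-case from Theorem~\ref{thm:tight}). Your organization is in fact slightly more streamlined---you bound each frame coefficient pointwise and then invoke Parseval, whereas the paper expands $\|x\|^2=\langle x,x\rangle$ via both representations and applies Cauchy--Schwarz twice---but the two routes are equivalent.
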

		Before proving Theorem \ref{thm:thmuncertaintycq}, we first provide the following preliminary results.
	\begin{prop}\label{prop:tight}
			Let the collection \( \mc R_{p,N} \) (defined in \eqref{eq:rpk}) form a tight frame for \( \ell^2(\mathbb{Z}_N) \) with frame bound $A$. Then $A=pd^2.$
		\end{prop}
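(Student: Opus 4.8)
The plan is to avoid verifying the frame inequality directly and instead exploit the standard trace identity for finite tight frames. Recall that if $\{f_j\}_{j\in J}$ is a tight frame with bound $A$ for an $N$-dimensional Hilbert space, then its frame operator equals $A\,I_N$, and taking traces yields $A N = \sum_{j\in J}\|f_j\|^2$. I would apply this identity to the system $\mc R_{p,N}$, whose elements are the translates $L_{pk}c_{q_i}$ for $k\in\mc J_d$ and $i\in\mc I_K$. Since the circular shift operators $L_{pk}$ are unitary on $\ell^2(\mathbb Z_N)$, one has $\|L_{pk}c_{q_i}\|=\|c_{q_i}\|$ for every $k$, so the identity specializes to
\begin{equation*}
A N = \sum_{i=1}^K\sum_{k=0}^{d-1}\|L_{pk}c_{q_i}\|^2 = d\sum_{i=1}^K \|c_{q_i}\|^2.
\end{equation*}

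The next step is to compute $\|c_{q_i}\|^2$. Since $c_{q_i}$ is $q_i$-periodic and $q_i$ divides $N$, the squared norm over the full length $N$ splits into $N/q_i$ copies of the sum over a single period, which by Proposition~\ref{prop:properties}(vi) equals $q_i\phi(q_i)$. This gives $\|c_{q_i}\|^2 = (N/q_i)\,q_i\phi(q_i) = N\phi(q_i)$. Substituting and invoking Gauss's identity $\sum_{i=1}^K\phi(q_i)=N$ (the sum of the totient over all divisors of $N$, already used in establishing the orthogonal decomposition $\ell^2(\mathbb Z_N)=S_{q_1}\oplus\cdots\oplus S_{q_K}$), I would obtain $A N = dN\sum_{i=1}^K\phi(q_i)=dN^2$, whence $A = dN = pd^2$ using $N=pd$.

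There is no genuine obstacle here: the only conceptual step is recognizing that the trace identity bypasses any direct estimation of the frame bounds, reducing the problem to the periodicity bookkeeping for $\|c_{q_i}\|^2$ and the elementary Gauss summation formula. If one prefers an even more self-contained route, the same value $A=pd^2$ can instead be read off from $A=\min_{m\in\mc J_d}\lambda_{\min}[\mc U^\ast(m)\mc U(m)]$ together with the explicit computations $\mc U^\ast(m)\mc U(m)=N^2 I_1$ (for $p=1$) and $=2d^2 I_2$ (for $p=2$) carried out in the proof of Theorem~\ref{thm:tight}; but the trace argument has the advantage of being uniform in $p$ and independent of those case-by-case eigenvalue calculations.
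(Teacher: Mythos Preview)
Your proof is correct. At its core it coincides with the paper's argument: both compute $A$ via a trace/sum-of-norms identity, reduce to $\sum_{i=1}^K\|c_{q_i}\|^2$, invoke $\|c_{q_i}\|^2=N\phi(q_i)$, and finish with Gauss's identity $\sum_{q_i\mid N}\phi(q_i)=N$. The only difference is in packaging. The paper routes the computation through the polyphase/Zak machinery: from $\mc U^\ast(m)\mc U(m)=AI_p$ it takes traces, sums over $m\in\mc J_d$, and then uses unitarity of the Zak transform to recover $\sum_i\|c_{q_i}\|^2$. You apply the frame-operator trace identity $AN=\sum_j\|f_j\|^2$ directly in $\ell^2(\mathbb Z_N)$, together with unitarity of the shift operators. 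Your route is shorter and avoids the Zak transform altogether; the paper's route has the (minor) advantage of staying inside the polyphase framework already set up for Lemma~\ref{lem:mainlem} and Theorem~\ref{thm:tight}. Substantively the two are the same argument.
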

		\begin{proof}
			In view of \eqref{eq:Uk}, the trace $tr(\mc U^\asterisk(m)\mc U(m))$ can be simplified as follows:
			\begin{equation}\label{eq: trZaklambda}
				tr(\mc U^\asterisk(m)\mc U(m))=d\sum_{i=1}^{K}\sum_{n=0}^{p-1}|\mc Zc_{q_i}(m,n)|^2=\sum_{n=0}^{p-1}\lambda_n(m),
			\end{equation} where for each $m,$  $\{\lambda_n(m)\}_{n=0}^{p-1}$ is the set of eigenvalues of the matrix  $\mc U^\asterisk(m)\mc U(m).$ Since $\mc R_{p,N}$ is a tight frame with frame bound $A,$ therefore  $\mc U^\asterisk(m)\mc U(m)=A I_p$ for $m\in \mc J_d.$ From \eqref{eq: trZaklambda}, we get
			$
			\frac{pA}{d}=\sum_{i=1}^{K}\sum_{n=0}^{p-1}|\mc Zc_{q_i}(m,n)|^2.
			$
			Taking summation from $m=0 \text{ to } d-1$ on both sides of the last equality, the left  side become $pA$ and the right quantity becomes $\sum_{i=1}^K\|\mc Zc_{q_i}\|^2=\sum_{i=1}^K\|c_{q_i}\|^2.$ Finally we have,
			$
			A = \frac{1}{p}\sum_{i=1}^{K}\|c_{q_i}\|^2=\frac{N}{p}  \sum_{i=1}^K\phi(q_i)=\frac{N^2}{p}=pd^2,
			$
			where the second equality follows using the sum of squares property of Ramanujan sums (see, Proposition \ref{prop:properties}(v)), according to which  $\|c_{q_i}\|^2=N \times \phi(q_i).$	Hence the claim follows.
		\end{proof}	
		\begin{lem}\label{lem: modcqlem}
			Let $N$ be any positive integer. Then, for any  divisor $q$ of N, we have $|c_q(n)| \leq \phi(N), \,\, n \in \mathbb Z_N,$
			and hence 
			$\max\{|c_q(n)| : n \in \mathbb Z_N, \text{ $q$ divides $N$}\}=\phi(N).$ 
		\end{lem}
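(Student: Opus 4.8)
The plan is to combine a one-line modulus bound on the Ramanujan sum with the divisibility monotonicity of Euler's totient function. First I would observe that, by definition~\eqref{eq:Ramanujansum}, the sum $c_q(n)=\sum_{\substack{k=1\\(k,q)=1}}^{q} e^{2\pi i kn/q}$ consists of exactly $\phi(q)$ terms, each of modulus one. The triangle inequality then gives immediately
\[
|c_q(n)| \le \sum_{\substack{k=1\\(k,q)=1}}^q \left| e^{2\pi i kn/q}\right| = \phi(q), \quad n \in \mathbb Z_N,
\]
with equality at $n=0$ since $c_q(0)=\phi(q)$. (Equivalently, one may invoke Proposition~\ref{prop:properties}(i) together with the reality of $c_q$ coming from Proposition~\ref{prop:properties}(ii).)

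The crux of the argument is to upgrade this to the uniform bound $\phi(q) \le \phi(N)$ valid for every divisor $q$ of $N$. Here I would stress that $\phi$ is \emph{not} monotone under the usual order (e.g.\ $\phi(5)>\phi(6)$), so one cannot merely use $q \le N$; instead I would exploit the fact that $\phi$ is monotone with respect to divisibility. Concretely, I would establish the stronger statement $\phi(q) \mid \phi(N)$ whenever $q \mid N$: using $\phi(p^{j}) = p^{j-1}(p-1)$ one checks that $\phi(p^{j}) \mid \phi(p^{k})$ whenever $0 \le j \le k$, and then multiplicativity of $\phi$ across the coprime prime-power parts of $q$ and $N$ yields $\phi(q) \mid \phi(N)$. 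Since both are positive integers, this forces $\phi(q) \le \phi(N)$, and combining with the previous paragraph gives $|c_q(n)| \le \phi(q) \le \phi(N)$ for all $n \in \mathbb Z_N$ and all divisors $q$ of $N$.

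Finally, I would verify that the bound $\phi(N)$ is actually attained, so that the stated maximum equals $\phi(N)$ and not something smaller. Taking the admissible choice $q = N$ (a divisor of itself) and $n=0$ gives $c_N(0) = \phi(N)$, which meets the bound; hence $\max\{|c_q(n)| : n \in \mathbb Z_N,\ q \mid N\} = \phi(N)$. The only genuine obstacle in this argument is the number-theoretic step $\phi(q)\le\phi(N)$ for $q \mid N$; everything else follows at once from the definition of $c_q$ and the evaluation $c_q(0)=\phi(q)$.
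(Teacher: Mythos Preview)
Your proof is correct and follows essentially the same approach as the paper: the triangle inequality gives $|c_q(n)|\le\phi(q)$, the divisibility monotonicity of $\phi$ gives $\phi(q)\le\phi(N)$ for $q\mid N$, and $c_N(0)=\phi(N)$ shows the bound is attained. The only minor difference is in justifying the middle step: you prove the stronger fact $\phi(q)\mid\phi(N)$ via prime-power factorization, whereas the paper writes $N=kq$ and invokes the identity $\phi(mn)=\phi(m)\phi(n)\,\ell/\phi(\ell)$ with $\ell=(m,n)$ to deduce $\phi(kq)\ge\phi(k)\phi(q)\ge\phi(q)$.
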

		\begin{proof}
			By the definition of $c_q(n),$ we have
			\begin{equation}\label{S5: modcq}
				\left| c_q(n)\right|= \bigg|  \sum_{\substack{k=1\\(k,q)= 1}}^q e^{2\pi i k n /q}\bigg| \leq \phi(q).
			\end{equation}	
			Note that
			$\phi(N)=\phi(k \cdot q)\geq \phi(k)\phi(q) \geq \phi(q),$
			where $N=k.q$ for some integer $k\geq1$ and 	the  inequality $\phi(k.q)\geq \phi(k)\phi(q)$ holds since for any positive integers $m$ and $n,$ $\phi(mn)=\phi(m)\phi(n)\ell/\phi(\ell)$ with $\ell=(m,n).$  
			Consequently  from \eqref{S5: modcq}, we  get $\left| c_q(n)\right| \leq \phi(N)$ for any divisor $q$ of $N.$ Since $c_{N}(0)=\phi(N),$ therefore $\max\{|c_q(n)| : n \in \mathbb Z_N, $\text{ $q$ divides $N$}$\}=\phi(N).$   Hence the claim follows.
		\end{proof}
		Now we are ready to provide the proof  of Theorem \ref{thm:thmuncertaintycq}.
		\begin{proof}[Proof of Theorem \ref{thm:thmuncertaintycq}]
			By using \eqref{Sunc: rep} and Proposition \ref{prop:tight} for $x \in \ell^2(\mathbb Z_N),$ we get
			\begin{equation*}
				\begin{aligned}
					\|x\|^2&=\bigg|\bigg\langle \sum_{n=0}^{N-1}\alpha_ne_n,  \frac{1}{A}\sum_{i=1}^K\sum_{m=0}^{d-1}	(x\, \asterisk \,c_{q_i})(pm)L_{pm}c_{q_i}\bigg\rangle\bigg|
					=\frac{1}{pd^2}\bigg|\sum_{n=0}^{N-1}\sum_{i=1}^K\sum_{m=0}^{d-1}\alpha_n\ol{(x\, \asterisk \,c_{q_i})(pm)}\langle e_n,L_{pm}c_{q_i} \rangle \bigg|\\
					&\leq \frac{1}{pd^2}\sum_{n=0}^{N-1}\sum_{i=1}^K\sum_{m=0}^{d-1}|\textcolor{blue}{\alpha_n}| |\ol{(x\, \asterisk \,c_{q_i})(pm)}| |\langle e_n,L_{pm}c_{q_i}\rangle| 
					\leq \frac{1}{pd^2}\sum_{n=0}^{N-1}|\alpha_n| \bigg(\sum_{i=1}^K\sum_{m=0}^{d-1}|\ol{(x\, \asterisk \,c_{q_i})(pm)}| \beta_o\bigg)
				\end{aligned}
			\end{equation*}
			where $\beta_o=\max \{|\langle e_n,L_{pm}c_{q_i} \rangle| : n \in \mathbb Z_N,   m \in \mathbb Z_d,  i \in \mc I_K\}.$ By the Cauchy-Schwarz inequality, we get
			\begin{equation*}
				\begin{aligned}
					\|x\|^2& \leq \frac{1}{pd^2}\bigg(\sum_{\substack{n=0\\ \alpha_n \neq 0}}^{N-1}|\alpha_n|^2\bigg)^{1/2} \times \bigg ( \sum_{\substack{n=0\\ \alpha_n \neq 0}}^{N-1}\big(\sum_{i=1}^K\sum_{m=0}^{d-1}|(x\, \asterisk \,c_{q_i})(pm)| \beta_o\big)^{2}\bigg)^{1/2}\\
					&\leq \frac{\beta_o}{pd^2}\|x\| \times \bigg(\sqrt{B_x} \times  \sum_{i=1}^K\sum_{m=0}^{d-1}1.|(x\, \asterisk \,c_{q_i})(pm)|\bigg)\\
					&\leq \frac{\beta_o}{pd^2}\|x\| \times \sqrt{B_x} \times \bigg(\sum_{i=1}^K\sum_{\substack{m=0 \\ (x\, \asterisk \,c_{q_i})(pm) \neq 0}}^{d-1}1^2\bigg)^{1/2} \times \bigg(\sum_{i=1}^K\sum_{\substack{m=0 \\ (x\, \asterisk \,c_{q_i})(pm)\neq 0}}^{d-1}|(x\, \asterisk \,c_{q_i})(pm)|^2\bigg)^{1/2}\\
					&	\leq \frac{\beta_o}{pd^2} \|x\| \times \sqrt{B_x} \times \sqrt{S_x} \times \sqrt{pd^2}\,\|x\|=\frac{\beta_o}{\sqrt{pd^2}} \sqrt{Sx B_x}\, \|x\|^2,
				\end{aligned}
			\end{equation*}
			where the last inequality follows using \eqref{Sunc: rep}. We finally get, $\sqrt{S_x W_x} \geq \frac{\sqrt{pd^2}}{\beta_o}.$ Using the fact that the arithmetic mean always
			dominates the geometric mean, we also get
			$$S_x +W_x \geq 2 \sqrt{S_x. W_x} \geq \frac{2\sqrt{pd^2}}{\beta_o}.$$
			Note that by fundamental theorem of group homomorphism, we have $\mathbb Z_N/\mathbb Z_d \cong \mathbb Z_p.$
			Thus for any $n \in \mathbb Z_N,$ there exist unique $k\in \mathbb Z_d$ and $\ell\in \mathbb Z_p$ such that  $n=pk+\ell.$ This implies  $e_n=e_{pk+\ell}=L_{pk}e_\ell$ and vice-versa. Therefore,  both the collections $\{e_n: n \in \mathbb Z_N\}$ and $\{L_{pk}e_\ell: k \in \mathbb Z_d, \ell \in  \mathbb Z_p\}$ are identical.			Then, for any   
			$m,k \in \mathbb Z_d,$  $i \in \mc I_K,$ and $\ell \in \mathbb Z_p,$ we have
			\begin{equation*}
				\begin{aligned}
					|\langle L_{pm}c_{q_i}, L_{pk}e_\ell \rangle_{\ell^2(\mathbb Z_N)}|& = |\langle  c_{q_i}, L_{p(k-m)}e_\ell\rangle_{\ell^2(\mathbb Z_N)}| = \bigg|\sum_{r=0}^{N-1}c_{q_i}(r)\ol{L_{pk^{'}}e_\ell(r)}\bigg|\\
					&=\bigg|\sum_{r=0}^{N-1}c_{q_i}(r)\ol{e_{\ell+{pk'}}(r)}\bigg|=\bigg|c_{q_i}(\ell+pk^{'})\bigg| \text{ for some $k^{'}\in \mathbb Z_d.$ }
				\end{aligned}
			\end{equation*}
			Using the above relation, it is clear that
			\begin{equation}\label{S5: maxeqmax}
				\begin{aligned}
					\max \{|\langle L_{pk}e_\ell,L_{pm}c_{q_i} \rangle|& :  m,k \in \mathbb Z_d,  i \in \mc I_K \text{ and } \ell \in \mathbb Z_p\}\\
					&=\max \{|c_{q_i}(\ell+pk)| : k \in \mathbb Z_d,  \ell \in \mathbb Z_p, i \in \mc I_K\}.
				\end{aligned}
			\end{equation}
			Then, $\beta_o$  is given by 
			\begin{equation*}
				\begin{aligned}
					\beta_o&=\max \{|\langle e_n,L_{pm}c_{q_i} \rangle| : n \in \mathbb Z_N,   m \in \mathbb Z_d,  i \in \mc I_K\}\\
					&=\max \{|c_{q_i}(\ell+pk)| : k \in \mathbb Z_d,  \ell \in \mathbb Z_p, i \in \mc I_K\},
				\end{aligned}
			\end{equation*}
			where the last equality follows from \eqref{S5: maxeqmax}. Now, by using Lemma \ref{lem: modcqlem}, we finally get $\beta_o =\phi(N).$
		\end{proof}
		
		%	We end this subsection with an example.
		%	\begin{exmp}
			%		From example \ref{S4: exampletight6}, we know that that the collection $$\{L_{2k}c_{q_i}\}_{k \in \mc I_{d'}}^{j \in \mc I_K}=\{\{L_kc_1\}_{k=0}^5,  \{L_kc_2\}_{k=0}^5,\{L_kc_3\}_{k=0}^5, \{L_kc_6\}_{k=0}^5\}$$ forms a tight frame for $\ell^2(\mathbb Z_{6}),$ where $N=6,p=2,d=3.$ We verify Theorem \ref{S5: thmuncertaintycq} for  $x=(1,1,1,1,1,1) \in \ell^2(\mathbb Z_6).$   Then, $x=\sum_{n=0}^5\alpha_ne_n,$ where $\alpha_n=1 \neq 0$ for each $n.$ Therefore $B_x=6.$ We now compute $S_x.$ For that, we need to find the samples $\{\langle x, L_{2k}c_1\rangle\}_{\substack{k=0,1,2 \\ i=1,2,3,4}}.$
			%		After calculating using MATLAB,  we find that, $\langle x, L_{2k}c_1\rangle =6$ for each $k=0,1,2$ and $\mc Ax(2k,i)=0$ for any $k=0,1,2$ and $i=2,3,4.$ Therefore, $S_x=3.$ Now,
			%		$$S_x +B_x=9 \geq \frac{2\times 3 \times \sqrt{2}}{\phi(6)}=4.2426, \,\,\, \text{ and } S_x.B_x=18 \geq 2 \bigg(\frac{3}{\phi(6)}\bigg)^2=4.5.$$
			%		This implies the  condition given in Theorem \ref{S5: thmuncertaintycq} is satisfied. 
			%	\end{exmp}			
		In the following subsection, we demonstrate an application of our uncertainty relations \eqref{eq:uncerrelations} in the context of signal transmission with information loss. Specifically, we prove Theorem \ref{thm: uncertaintyrecovery}.
		\subsection{Proof of Theorem \ref{thm: uncertaintyrecovery}}\label{sub:missing} 	To prove Theorem \ref{thm: uncertaintyrecovery}, we use the $\ell_1$ reconstruction algorithm.  The $\ell_1$ norm of any signal $x \in \ell^2(\mathbb Z_N)$ is given by  $\|x\|_1=\sum_{n=0}^{N-1}|x(n)|.$ Consider the following optimization problem:
		\begin{equation}\label{opti1}
			\tilde{x} = \arg \min\limits_{x'} \left\{\|x'\|_1 \,:\, \mc T_{\mc J}x' =\mc T_{\mc J}x\right\},
		\end{equation}
		where for $x\in \ell^2(\mathbb Z_N),$ $\mc T_{\mc J}x$  is defined in \eqref{eq:TMx} and $\tilde{x}$ is the signal satisfying $\mc T_{\mc J}\tilde{x} =\mc T_{\mc J}x$ with minimum $\ell_1$ norm. This problem can be solved easily with less complexity and requires $O(N\hash_{\mc J}^2 log(N))$ operations. See, for reference, \cite{levy1981reconstruction, oldenburg1983recovery, santosa1986linear}.
		Before proving the above result, we first provide a lemma, which is  another form of discrete Logan phenomenon \cite{donoho1989uncertainty} for oversampled filter banks.
		
	\begin{lem}\label{loganlem}
		Let \( x \in \ell^2(\mathbb{Z}_N) \) be a signal supported on a set \( \mc C \subseteq \mathbb{Z}_N \), and let \( \mc J \subset \mc J_d \times \mc I_K \) be a set satisfying
		\begin{equation}\label{logancondtn}
			2 \#_{\mc J} \, \#_{\mc C} < p \left( \frac{d}{\phi(N)} \right)^2.
		\end{equation}
		Then, the best approximation to \( x \) by signals in
		\[
		\mc S_{\mc J} = \left\{ h \in \ell^2(\mathbb{Z}_N) \,\middle|\, (h \ast c_{q_i})(pk) = 0 \text{ for all } (k,i) \in \mc J^c \right\}
		\]
		is the zero signal.
		\end{lem}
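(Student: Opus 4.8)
The plan is to read ``the best approximation is zero'' in the $\ell^1$ sense (which is the notion relevant to the $\ell_1$ reconstruction \eqref{opti1} used to prove Theorem~\ref{thm: uncertaintyrecovery}) and to reduce it to an $\ell^1$ concentration estimate for elements of $\mc S_{\mc J}$. Since $\mc S_{\mc J}$ is a subspace of $\ell^2(\mathbb Z_N)$, saying that $0$ is the best approximation to $x$ means $\|x-h\|_1 \ge \|x\|_1$ for every $h\in\mc S_{\mc J}$. As $x$ is supported on $\mc C$, I would split the $\ell^1$ norm over $\mc C$ and $\mc C^c$ and apply the reverse triangle inequality to get
\[
\|x-h\|_1 \;\ge\; \|x\|_1 + \Big(\|h\|_1 - 2\sum_{n\in\mc C}|h(n)|\Big),
\]
so it suffices to prove the concentration bound $\sum_{n\in\mc C}|h(n)| < \tfrac12\|h\|_1$ for every nonzero $h\in\mc S_{\mc J}$; the strict inequality will also give that $0$ is the \emph{unique} best approximation.

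Next I would record the tools. Under the standing hypothesis (inherited from Theorem~\ref{thm: uncertaintyrecovery}) that $\mc R_{p,N}$ is a tight frame, Proposition~\ref{prop:tight} gives frame bound $A=pd^2$, so every $h$ satisfies both the reconstruction formula $h=\tfrac1A\sum_{(k,i)}(h\ast c_{q_i})(pk)\,L_{pk}c_{q_i}$ and the Parseval identity $\sum_{(k,i)}|(h\ast c_{q_i})(pk)|^2=A\|h\|_2^2$ (here $(h\ast c_{q_i})(pk)=\langle h,L_{pk}c_{q_i}\rangle$ since $\tilde c_{q_i}=c_{q_i}$). For $h\in\mc S_{\mc J}$ the coefficients vanish off $\mc J$, so both sums range only over $(k,i)\in\mc J$. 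Bounding $|(L_{pk}c_{q_i})(n)|=|c_{q_i}(n-pk)|\le\phi(N)$ by Lemma~\ref{lem: modcqlem} and applying Cauchy--Schwarz over the $\#_{\mc J}$ active indices, together with the Parseval identity, I obtain the pointwise estimate
\[
\|h\|_\infty \;\le\; \frac{\phi(N)\sqrt{\#_{\mc J}}}{\sqrt A}\,\|h\|_2 .
\]

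The decisive step — where the constant $2$ in the hypothesis is fixed — is to feed this $\ell^\infty$ bound into the elementary inequality $\|h\|_2^2\le\|h\|_\infty\|h\|_1$, yielding $\|h\|_2\le \tfrac{\phi(N)\sqrt{\#_{\mc J}}}{\sqrt A}\|h\|_1$, and then to estimate the mass on $\mc C$ by $\sum_{n\in\mc C}|h(n)|\le \#_{\mc C}\|h\|_\infty$ (rather than by $\ell^2$ Cauchy--Schwarz). Chaining the three bounds gives
\[
\sum_{n\in\mc C}|h(n)| \;\le\; \#_{\mc C}\,\|h\|_\infty \;\le\; \frac{\#_{\mc C}\,\#_{\mc J}\,\phi(N)^2}{A}\,\|h\|_1 .
\]
Because $A=pd^2$, the coefficient equals $\#_{\mc C}\#_{\mc J}\big/\big(p(d/\phi(N))^2\big)$, and the hypothesis $2\#_{\mc J}\#_{\mc C} < p\,(d/\phi(N))^2$ is precisely the assertion that this coefficient is strictly below $\tfrac12$. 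Hence $\sum_{n\in\mc C}|h(n)| < \tfrac12\|h\|_1$ for $h\neq 0$, which closes the reduction.

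I expect the main obstacle to be exactly this calibration of constants. A naive estimate $\sum_{n\in\mc C}|h(n)|\le\sqrt{\#_{\mc C}}\,\|h\|_2$ only produces the coefficient $\phi(N)\sqrt{\#_{\mc C}\#_{\mc J}}/\sqrt A$, which the hypothesis bounds only by $1/\sqrt2$, losing a factor $\sqrt2$ and failing to reach $\tfrac12$; as in the Donoho--Stark treatment of Logan's phenomenon, it is essential to invoke the $\ell^\infty$ bound \emph{twice} — once in the pointwise estimate over $\mc C$ and once inside $\|h\|_2^2\le\|h\|_\infty\|h\|_1$. A secondary point to check is that membership in $\mc S_{\mc J}$ genuinely restricts the tight-frame reconstruction sum to $\mc J$, which relies on the identification of the convolution samples $(h\ast c_{q_i})(pk)$ with the frame coefficients $\langle h,L_{pk}c_{q_i}\rangle$.
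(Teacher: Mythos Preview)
Your proposal is correct and lands on exactly the same concentration estimate $\sum_{n\in\mc C}|h(n)|<\tfrac12\|h\|_1$ and the same $\ell^1$ splitting over $\mc C$ and $\mc C^c$ as the paper. The only genuine difference is how you reach the pointwise bound $\|h\|_\infty\le \phi(N)^2\#_{\mc J}\,A^{-1}\|h\|_1$. The paper obtains it in one stroke: it expands $(h\ast c_{q_i})(pk)=\sum_\ell h(\ell)\,\overline{L_{pk}c_{q_i}(\ell)}$ inside the tight-frame reconstruction, so that two factors of $|c_{q_i}(\cdot)|\le\phi(N)$ appear simultaneously, and then sums to get $|h(n)|\le A^{-1}\phi(N)^2\#_{\mc J}\|h\|_1$ directly. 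You instead pass through $\ell^2$ (Cauchy--Schwarz plus the Parseval identity for the tight frame to get $\|h\|_\infty\le\phi(N)\sqrt{\#_{\mc J}/A}\,\|h\|_2$) and then bootstrap with $\|h\|_2^2\le\|h\|_\infty\|h\|_1$. Both routes yield the identical constant, so nothing is lost; the paper's argument is a bit more elementary since it never invokes Parseval or the interpolation step, while yours makes explicit the connection to the Donoho--Stark $\ell^\infty$--$\ell^2$--$\ell^1$ mechanism and would adapt more readily if one wanted mixed-norm variants.
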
	
		\begin{proof}
			Note that if $\#_{\mc C}=0,$ then $x=0.$ In this case, the result trivially holds as $0 \in \mc S_{\mc J}.$
			
			Now assume that $\#_{\mc C} \neq 0.$  
			Then  by application of Theorem \ref{thm:thmuncertaintycq}, we first prove $x \notin \mc S_{\mc J}.$ For that, assume that the set of samples $\{(x\, \asterisk \,c_{q_i})(pk): k \in \mc J_d, i \in \mc I_K\}$ of $x$  has at most $\#_{\mc L}$  non-zero samples for some $\mc L \subset \mc J_d \times \mc I_K$ i.e., $(x\, \asterisk \,c_{q_i})(pk)=0$  for $(k,i) \in \mc L^c.$  By  Theorem \ref{thm:thmuncertaintycq} and \eqref{logancondtn}, we have
			$$\#_{\mc L}\#_{\mc C} \geq p\left(\frac{d}{\phi(N)}\right)^2> 2\#_{\mc J}\#_{\mc C}.$$
			This implies, $\#_{\mc L} > 2\#_{\mc J}.$ Since the signals in $\mc S_{\mc J}$ have at most $\#_{\mc J}$ non-zero samples, it follows that $x \notin \mc S_{\mc J}.$ 		
			
			To prove that the zero signal is the best approximation to $x,$ consider $0 \neq y \in \mc S_\mc J$ be arbitrary. Since $\mc R_{p,N}$ is a tight frame for $\ell^2(\mathbb Z_N)$ with frame bound $pd^2,$ then for $n \in \mathbb Z_N,$ we can write
			\begin{equation*}
				\begin{aligned}
					&\left|y(n)\right|=\frac{1}{pd^2}\left|\sum_{(k,i) \in \mc J}(y\, \asterisk \,c_{q_i})(pk)L_{pk}c_{q_i}(n)\right|=\frac{1}{pd^2}\left | \sum_{(k,i) \in \mc J}\sum_{\ell=0}^{N-1}y(\ell) \ol{L_{pk}c_{q_i}(\ell)} L_{pk}c_{q_i}(n) \right |\\
					&	\leq\frac{1}{pd^2}\sum_{\ell=0}^{N-1}\left |y(\ell) \right | \left(\sum_{(k,i) \in \mc J}\left |\ol{c_{q_i}(\ell-pk)}c_{q_i}(n-pk) \right | \right) \leq\frac{1}{pd^2}\sum_{\ell=0}^{N-1}\left |y(\ell) \right | \phi(N)^2 \#_{\mc J},
				\end{aligned}
			\end{equation*}
			where the last inequality is due to Lemma \ref{lem: modcqlem}.	This implies, $\max_{n\in \mathbb Z_N} |y(n)| \leq \frac{\phi(N)^2}{pd^2}\#_{\mc J}\|y\|_1.$  Let $P_\mc C$ denotes the projection operator on $\ell^2(\mathbb Z_N)$ onto $\mc C \subseteq \mathbb Z_N.$ Then,  using \eqref{logancondtn}, we have 
			\begin{equation*}
				\begin{aligned}
					\|P_\mc C y\|_1=\sum_{n \in \mc C}|y(n)| \leq \max_{n \in \mathbb Z_N}|y(n)| \#_\mc C \leq   \frac{\phi(N)^2}{pd^2}\#_{\mc J} \#_\mc C\|y\|_1 < \frac{1}{2} \|y\|_1.
				\end{aligned}
			\end{equation*}
			From this, we get $\|(I-P_\mc C)y\|_1 = \|y\|_1 - \|P_\mc C y\|_1 > \frac{1}{2}\|y\|_1 > \|P_\mc Cy\|_1.$ Consequently, we get
			\begin{equation*}
				\begin{aligned}
					\|x-y\|_1 &= \|P_\mc C(x-y)\|_1+ \|(I-P_\mc C)y\|_1 \\
					&\geq \|P_\mc Cx\|_1 -\|P_\mc Cy\|_1 +\|(I-P_\mc C)y\|_1  > \|P_\mc Cx\|_1=\|x\|_1.
				\end{aligned}
			\end{equation*} 
			This shows that any non-zero signal in $\mc S_{\mc J}$ is not approximating $x$ well. 	Hence, the claim follows.
		\end{proof} 
		Now we are ready to provide the proof of Theorem \ref{thm: uncertaintyrecovery}.\begin{proof}[Proof of Theorem \ref{thm: uncertaintyrecovery}]
			We first show that $x$ is the unique signal satisfying \eqref{uncertrecovcondtn} that generates $D=\mc T_{\mc J}x.$ For this purpose, consider $x'$ to be another signal which satisfies \eqref{uncertrecovcondtn} and  generates $D,$ i.e., $\mc T_{\mc J}x'=D=\mc T_{\mc J}x.$ Define $h:=x'-x,$ then $\mc T_{\mc J}h=0.$ This implies $h \in \mc S_{\mc J^c}.$  Therefore, $\#_{\supp(h)}\leq \#_{\supp(x')}+ \#_{\supp(x)}=2\#_\mc C,$ i.e, $h$ has at most $2\#_\mc C$ non-zero elements. Then,  using Theorem \ref{thm:thmuncertaintycq}, we get $(2\#_\mc C).\#_{\mc J^c}\geq p\left(\frac{d}{\phi(N)}\right)^2,$ which is a contradiction to \eqref{uncertrecovcondtn}. Hence $x$ is the unique signal satisfying \eqref{uncertrecovcondtn} that generates $D=\mc T_{\mc J}x.$

			We now provide an algorithm to reconstruct $x$ from $\mc T_{\mc  J}x.$ For this purpose,  let $\tilde{x}$ be the signal satisfying $\mc T_{\mc J}\tilde{x} =\mc T_{\mc J}x$ with the  minimum $\ell_1$ norm. In other words,
			$$\tilde{x} = \arg \min\limits_{x'} \left\{\|x'\|_1 \,:\, \mc T_{\mc J}x' =\mc T_{\mc J}x\right\}.$$ We show that $\tilde{x}=x.$ First, note that any $h \in \mc S_{\mc J^c}$ can be written in the form $h=x'-x,$ where $x'$ satisfies $T_{\mc J}x' =\mc T_{\mc J}x.$ Indeed, consider $x'=h+x$. Then, we have  $\mc T_{\mc J}x'=\mc T_{\mc J}(x+h)=\mc T_{\mc J}x.$
			It is given that $2\#_{\mc J^c}\#_\mc C<p\left(\frac{d}{\phi(N)}\right)^2,$ then by Lemma \ref{loganlem}, the best approximation  to $x$ by signals in $S_{\mc J^c}$ uses $h=0.$ In other words,
			$\min\limits_{h \in \mc S_{\mc J^c}}\, \|x+h\|_1=\|x\|_1$
			or
			$\min \left\{\|x+(x'-x)\|_1 : \mc T_{\mc J}x' =\mc T_{\mc J}x \right\} =\|x\|_1$
			or
			$\arg \min\limits_{x'}\left\{\|x+(x'-x)\|_1 : \mc T_{\mc J}x' =\mc T_{\mc J}x\right\} =x.$
			This proves our claim.
		\end{proof} 
		In the next subsection, we prove Theorem \ref{thm:recovfromnoisy}, which is another application of Theorem \ref{thm:thmuncertaintycq}. 
		\subsection{Proof of Theorem \ref{thm:recovfromnoisy}}\label{sub:nois}	
		To prove Theorem \ref{thm:recovfromnoisy}, consider the following optimization problem: 
		\begin{equation}\label{eq:optinoisy}
			\tilde{x} = \arg \min\limits_{x'\in \mc S_{\mc M}}\|y-x'\|_1,
		\end{equation}
		where \( \mc M \subset \mc J_d \times \mc I_{K} \) is such that $x \in \mc S_{\mc M}$ and $y$ is the noisy signal defined in \eqref{eq:noisysignal}.  This optimization problem can be solved efficiently
		by using convex optimization algorithms such as \cite{koh2008l1}.
		\begin{proof}[Proof of Theorem \ref{thm:recovfromnoisy}]
			We prove that the solution of   \eqref{eq:optinoisy} is $x,$ i.e., $\tilde{x}=x.$ For this purpose,  consider $x' \in \mc S_{\mc M},$ then we get
			$\|y-x^{'}\|_1=\|x+\eta-x^{'}\|_1
			=\|\eta+(x-x^{'})\|_1.$
			Note that for $(k,i) \in \mc M^c,$ 
			$\big((x-x')\, \asterisk\, c_{q_i}\big)(pk)=\langle x-x',L_{pk}c_{q_i} \rangle=\langle x,L_{pk}c_{q_i} \rangle\,-\,\langle x',L_{pk}c_{q_i} \rangle=0.$ This implies $x-x' \in \mc S_{\mc M}.$ Since $2 \#_{\mc M} \#_{\mc N} < p\left(\frac{d}{\phi(N)}\right)^2,$ then by Lemma \ref{loganlem}, we get 
			$	\min\limits_{x'\in \mc S_{\mc M}}\|y-x'\|_1=\min\limits_{x'\in \mc S_{\mc M}}\|\eta+(x-x')\|_1=\|\eta\|_1$
			or
			$\arg	\min\limits_{x'\in \mc S_{\mc M}}\|y-x'\|_1=\arg \min\limits_{x'\in \mc S_{\mc M}}\|\eta+(x-x')\|_1=x.$
			This proves our claim.
		\end{proof}
		The conditions \eqref{uncertrecovcondtn} and \eqref{condtn:2nmnt} associated with our missing-sample result, Theorem \ref{thm: uncertaintyrecovery}, and the denoising result, Theorem \ref{thm:recovfromnoisy}, are impractical for real-world applications. Consequently, expecting perfect recovery from the optimization problems \eqref{opti1} and \eqref{eq:optinoisy} is unrealistic in both scenarios.
		In the next section, we demonstrate that the recovery of a signal in practical settings can be significantly improved by utilizing the information provided by the filter bank based on Ramanujan sums \(c_{q_1}, c_{q_2}, \dots, c_{q_K}\).

		\section{Applications}\label{S4}
		In this section, we provide the applications of the filter bank based on Ramanujan sums $c_{q_1},c_{q_2}, \hdots, c_{q_K}$ in further improving the optimization problems \eqref{opti1} and \eqref{eq:optinoisy}. We show that incorporation of the periodicity information of signals using the properties of
		Ramanujan sums leads to greater signal-to-noise ratio (SNR) gains for each of the optimization
		problems \eqref{opti1} and \eqref{eq:optinoisy}.
		
		The SNR gain metric measures the improvement in the signal after applying a particular signal processing technique, such as filtering, noise reduction, or amplification. It compares the quality of the signal before and after processing, in terms of how much the signal's power is boosted relative to the noise.
		SNR is defined as the ratio of the power of the signal ($P_{\text{signal}}$) to the power of the noise and is  typically expressed in decibels (dB), using the formula:
		$
		\text{SNR (dB)} = 10 \log_{10}\left( P_{\text{signal}}/P_{\text{noise}} \right),
		$ where the power $P$ of a signal $x \in \ell^2(\mathbb Z_N)$ is given by the formula $
		P = \frac{1}{N} \|x\|^2.$
		The SNR gain is the difference between the SNR after processing (e.g., filtering) and the SNR before processing:
		\[
		\text{SNR gain (dB)} = \text{SNR after (dB)} - \text{SNR before (dB)}.
		\]
		
		%
		%This allows for better reconstruction in cases of significant missing data or when the signal is heavily contaminated by noise. 
		%
		%
		%We also show that by using the prior information that the period of the given signal is a divisor of $N,$ both the optimization problems perform better and provide comparatively high SNR gain. The process of denoising using Ramanujan sums as analysis filters and periodicity information of the signal has also been provided by Kulkarni and Vaidyanathan \cite{kulkarni2023periodicity}.  In our discussion, we study the relationship of the conditions \eqref{uncertrecovcondtn} and \eqref{condtn:2nmnt} involved in our missing sample result Theorem \ref{thm: uncertaintyrecovery} and denoising result Theorem \ref{thm:recovfromnoisy}, respectively with SNR gain from their corresponding optimization algorithms utilizing the properties of Ramanujan sums.
		
		We provide the numerical implementations of our results Theorem \ref{thm: uncertaintyrecovery} and Theorem \ref{thm:recovfromnoisy}. We provide examples to explain the reconstruction of a given signal in view of the uncertainty principle  from $a)$ truncated signal $\mc T_{\mc J}x$ defined in \eqref{eq:TMx} in Subsection \ref{sub: missingdata} and $b)$ from noisy signal $y=x+\eta$ in Subsection \ref{sub:noisy}. We use the SNR gain metric to compare the outputs from the optimization problems \eqref{opti1} and \eqref{eq:optinoisy} with truncated signal $\mc T_{\mc J}x$ and the noisy signal $y=x+\eta$ defined in \eqref{eq:noisysignal}, respectively.

		\subsection{Application in signal recovery from missing data}\label{sub: missingdata}
		%The condition \eqref{uncertrecovcondtn} is impractical from the application point of view. First, due to the the unavailability of the value $N_B$ and second due to the bound on the right hand side being very small in general. For example, if $N=38$ and $p=2.$ Then $d=19$ and the bound  $p\left(\frac{d}{\phi(N)}\right)^2= 2(19/\phi(19))^2=2.111.$ This means $\#_{\mc J^c}\#_\mc C<1.055,$ which is not meaningful.  Thus from the application point of view, one can expect that the value $2\#_{\mc J^c}\#_\mc C$ will dominate the bound $p\left(\frac{d}{\phi(N)}\right)^2.$ That is what the uncertainty principle says!
		
		The condition \eqref{uncertrecovcondtn} is impractical for applications for two main reasons. First, the value of $N_B$ is often unavailable, and second, the bound on the right-hand side is typically too small to be meaningful. For instance, consider $N=38$ and $p=2$. Here, $d=19$, and the bound evaluates to $p\left(\frac{d}{\phi(N)}\right)^2 = 2\left(\frac{19}{\phi(38)}\right)^2 = 2.111$. This implies that $\#_{\mc J^c}\#_\mc C < 1.055$, which is not meaningful in practical scenarios and thus  it is reasonable to expect that the value $2\#_{\mc J^c}\#_\mc C$ will surpass the bound $p\left(\frac{d}{\phi(N)}\right)^2$. 
		
		%The condition \eqref{uncertrecovcondtn} can be assumed to be ideal in nature. It means the more we are close to the bound $p\left(\frac{d}{\phi(N)}\right)^2$, the better is the reconstruction of $x$ from $\mc T_{\mc J}x$ using the optimization problem  \eqref{opti1}. Thus the reconstruction from $\mc T_{\mc J}x$ can be expected to be worse if a significant number of samples are missing from the signal. In this situation, the problem \eqref{opti1} can be further optimized if  we have the prior information that the period of the given signal (say, $P$)  is a divisor of $N.$ Suppose $P_1,P_2,\hdots,P_m$ are the periodic components of the signal $x.$ Since $P=\text{lcm}(P_1,P_2,\hdots P_m),$ therefore each $P_i$ also divides $N.$ Then, it is clear that $(x\, \asterisk \,c_{q_j})(pk)=0$ for $q_j \notin \{P_1,P_2, \hdots, P_m\}.$ This important information can be combined with the optimization problem \ref{opti1} and the optimization problem modifies to:
		%\begin{equation}\label{opti1mod}
		%	\tilde{x} = \arg \min_{x'} \left\{ \|x'\|_1 \;:\; \mc T_{\mc J}x' = \mc T_{\mc J}x \text{ and } (x' \ast c_{q_i}) = 0 \text{ for all } q \mid N \text{ and } q \notin \{P_1, P_2, \dots, P_m\} \right\}.
		%\end{equation}
		%This solution provided by \eqref{opti1mod} can be expected to have high SNR gain from $\mc T_{\mc J}x$ as compared to the SNR gain of the solution provided by \eqref{opti1}.

		The condition \eqref{uncertrecovcondtn} can be regarded as ideal in nature. In particular, the closer the quantity \( 2\#_{\mathcal{J}^c} \#_{\mathcal{C}} \) is to the bound \( p\left(\frac{d}{\phi(N)}\right)^2 \), the more accurately the signal \( x \) can be recovered from  \( \mathcal{T}_{\mathcal{J}} x \) via the optimization problem~\eqref{opti1}. Therefore,  the reconstruction from $\mc T_{\mc J}x$ can be expected to be worse if a significant number of samples are missing from the signal.
	In such cases, the problem~\eqref{opti1} admits further refinement under the assumption that the period of the signal (say, \(P\)) is known and satisfies \(P \mid N\). Suppose the periodic components of the signal $x$ are $P_1, P_2, \ldots, P_m$. Since $P = \text{lcm}(P_1, P_2, \dots, P_m)$, it follows that each $P_i$ also divides $N$. Consequently, $(x \ast c_{q_j})(pk) = 0$ for $q_j \notin \{P_1, P_2, \dots, P_m\}$. 
		This  information can be incorporated into the optimization problem \eqref{opti1}, modifying it as follows:
		\begin{equation}\label{opti1mod}
			\tilde{x} = \arg \min_{x'} \left\{ \|x'\|_1 \;:\; \mc T_{\mc J}x' = \mc T_{\mc J}x \text{ and } (x' \ast c_{q}) = 0 \text{ for all } q \mid N \text{ and } q \notin \{P_1, P_2, \dots, P_m\} \right\}.
		\end{equation}
		The solution provided by \eqref{opti1mod}  achieves a significantly higher SNR gain from $\mc T_{\mc J}x$ compared to the SNR gain obtained from the solution of \eqref{opti1}.
		
		In order to illustrate this, we consider an example of a signal $x$ with length $N=70$ and periodic components $P_1=5$ and $P_2=7$, represented by solid blue line in Figure~\ref{fig:thm1_plots}.
		The divisors of $N=70$ are $q_1=1, q_2=2, q_3=5, q_4=7, q_5=10, q_6=14, q_7=35, \text{ and } q_8=70$, yielding $K=8$. We consider the filter bank with decimation ratio $p=2,$ whose $i$-th channel corresponds to the $q_i$-th Ramanujan sum $c_{q_i}$ for $1 \leq i \leq 8$. For $p=2$, Theorem~\ref{thm:tight} guarantees that $\mc R_{2,70}$ forms a tight frame for $\ell^2(\mathbb Z_{70})$, satisfying the tight-frame condition of Theorem~\ref{thm: uncertaintyrecovery}. 
		Now, suppose the following samples of the signal $x$ are missing:
		\begin{equation}\label{missingsampleset2}
			\{(x\, \asterisk \,c_{q_3})(2k)\}_{k=0}^{10} \cup \{(x\, \asterisk \,c_{q_5})(2k)\}_{k=21}^{34} \cup \{(x\, \asterisk \,c_{q_7})(2k)\}_{k=10}^{34}.
		\end{equation}
	
		The set $\mc J$ for the  truncated sum $\mc T_{\mc J}x$ in this case is the complement of the set 
		\[
		\{(k,3)\}_{k=0}^{10} \cup \{(k,5)\}_{k=21}^{34}\cup \{(k,7)\}_{k=10}^{34},
		\]
		within $\{0,1,\ldots,34\} \times \{1,2,\ldots,8\}$. 
		The solution to the optimization problem \eqref{opti1} for this case is depicted by the green line with triangular markers in Figure~\ref{fig:thm1_plots}.
		It is clear from the figure that the present solution deviates significantly from the original signal, yielding a low SNR gain of $-8.4547$ dB. It is due to the reason that the value $2\#_{\mc J^c}\#_{\mc C}$ is large compared to the bound  $p(d/\phi(N)^2)$ in this case.	By incorporating the information that the original signal $x$ has periodic components $P_1=5$ and $P_2=7$, the modified optimization problem \eqref{opti1mod} becomes:
		\begin{equation*}
			\tilde{x} = \arg \min\limits_{x'} \left\{\|x'\|_1 \,:\, \mc T_{\mc J}x' =\mc T_{\mc J}x  \text{ and } (x' \ast c_{q}) = 0 \text{ for $q|70$ and $q\neq 5,7$}\right\}.
		\end{equation*}
		The solution for the missing sample set \eqref{missingsampleset2} is  illustrated using red square markers in Figure~\ref{fig:thm1_plots}.
		It is evident that this solution perfectly aligns with the original signal and achieves an SNR gain of $197.3205$ dB.

		%We will explain this using an example. 
		%Consider the signal $x$ as shown in Figure ~\ref{fig:thm1_plots} (blue line) with length $N=70$ and having periodic components $P_1=5$ and $P_2=7.$ If we consider $p=2,$ then by Theorem  \ref{thm:tight},  $\mc R_{2,70}$ forms a tight frame for $\ell^2(\mathbb Z_{70})$ and hence the tight-frame   condition of Theorem \ref{thm: uncertaintyrecovery} is satisfied.  
		%The divisors of $N=70$ are $q_1=1,q_2=2,q_3=5,q_4=7,q_5=10,q_6=14,q_7=35, \text{ and }q_8=70.$ Therefore $K=8$ and the $i$-th channel of the filter bank is given by the $q_i$-th
		%Ramanujan sum $c_{q_i}$  for $1 \leq i \leq 8.$
		%Suppose the set of samples,
		%\begin{equation}\label{missingsampleset2}
		%	\{(x\, \asterisk \,c_{q_3})(2k)\}_{k=0}^{10} \cup \{(x\, \asterisk \,c_{q_5})(2k)\}_{k=21}^{34} \cup \{(x\, \asterisk \,c_{q_7})(2k)\}_{k=10}^{34},
		%\end{equation}
		%are missing and we have to reconstruct $x$ using $\mc T_{\mc J}x,$  where $\mc J$ is the complement of the set 	$\{(k,3)\}_{k=0}^{10} \cup \{(k,5)\}_{k=21}^{34}\cup \{(k,7)\}_{k=10}^{34}$  in $\{0,1, \hdots,34\} \times \{1,2, \hdots, 8\}.$ The solution of the optimization problem \ref{opti1} in this case is shown in Figure ~\ref{fig:thm1_plots} (green line with triangular marker).  From this figure, it is visible that
		%the solution is way off from the original signal and having  low SNR gain of -8.4547 dB. This is due to the value $2\#_{\mc J^c}\#_{\mc C}$ getting far off from the bound $p(d/\phi(N)^2)$ as expected. 
		% Add this line in the preamble
		\begin{table}[htbp]
			\centering
			\begin{tabular}{|c|c|c|c|c|}
				\hline
				\textbf{S.No.} & \textbf{Missing sample locations} & \textbf{$2\#_{\mc J^c}\#_{\mc C}$} & \textbf{SNR gain  (dB)} & \textbf{SNR gain  (dB)} \\
				& $(k,i)$ & & \textbf{(\ref{opti1})} & \textbf{(\ref{opti1mod})} \\
				\hline
				1. & $\{(k,2)\}_{k=0}^{2}, \{(k,3)\}_{k=17}^{20}, \{(k,5)\}_{k=27}^{29}$ & $20\#_{\mc C}$ & 174.8449 & 193.6376 \\
				2. & $\{(k,4)\}_{k=15}^{34}$ & $40\#_{\mc C}$ & 163.9832 & 223.7594 \\
				3. & $\{(k,3)\}_{k=0}^{24}, \{(k,8)\}_{k=0}^{24}$ & $100\#_{\mc C}$ & 2.7150 & 210.6610 \\
				4. & $\{(k,3)\}_{k=0}^{10}, \{(k,5)\}_{k=21}^{34}, \{(k,7)\}_{k=10}^{34}$ & $100\#_{\mc C}$ & -8.4547& 197.3205 \\
				5. & $\{(k,4)\}_{k=6}^{34}, \{(k,5)\}_{k=0}^{34}, (12,6), \{(k,7)\}_{k=0}^{34}$ & $200\#_{\mc C}$ & -12.3595 & 207.8186 \\
				\multirow{2}{*}{6.} & $\{(k,1)\}_{k=0}^{9}, \{(k,2)\}_{k=5}^{14}, \{(k,3)\}_{k=11}^{30}$ & \multirow{2}{*}{$200\#_{\mc C}$} & \multirow{2}{*}{-6.1835} & \multirow{2}{*}{224.2178} \\
				& $\{(k,4)\}_{k=21}^{34}, \{(k,5)\}_{k=17}^{34}, \{(k,7)\}_{k=6}^{34}$ & & & \\
				\hline
			\end{tabular}
			\vspace{2mm}
			\caption{ Comparison of SNR gains obtained from the solutions of optimization problems \eqref{opti1} and \eqref{opti1mod} across different cases of missing samples.}
			\label{table:missing}
		\end{table}

		The SNR gains obtained from optimization problems \eqref{opti1} and \eqref{opti1mod} for various cases of missing samples of the signal \(x\) are compared in Table \ref{table:missing}. 
		It is clear from the table that the solutions obtained from problem \eqref{opti1mod} consistently outperform those obtained from problem \eqref{opti1} for all cases. This  indicates that even when a significant number of samples are missing---or equivalently, when the value of \(2\#_{\mc J^c}\#_{\mc C}\) deviates substantially from the bound \(p(d/\phi(N))^2\)---the signal \(x\) can still be recovered with high accuracy from \(\mc T_{\mc J}x\), provided that the periodicity information  of the signal is known in advance.
		\begin{figure}[!htb]
			\centering
			\includegraphics[scale=0.5]{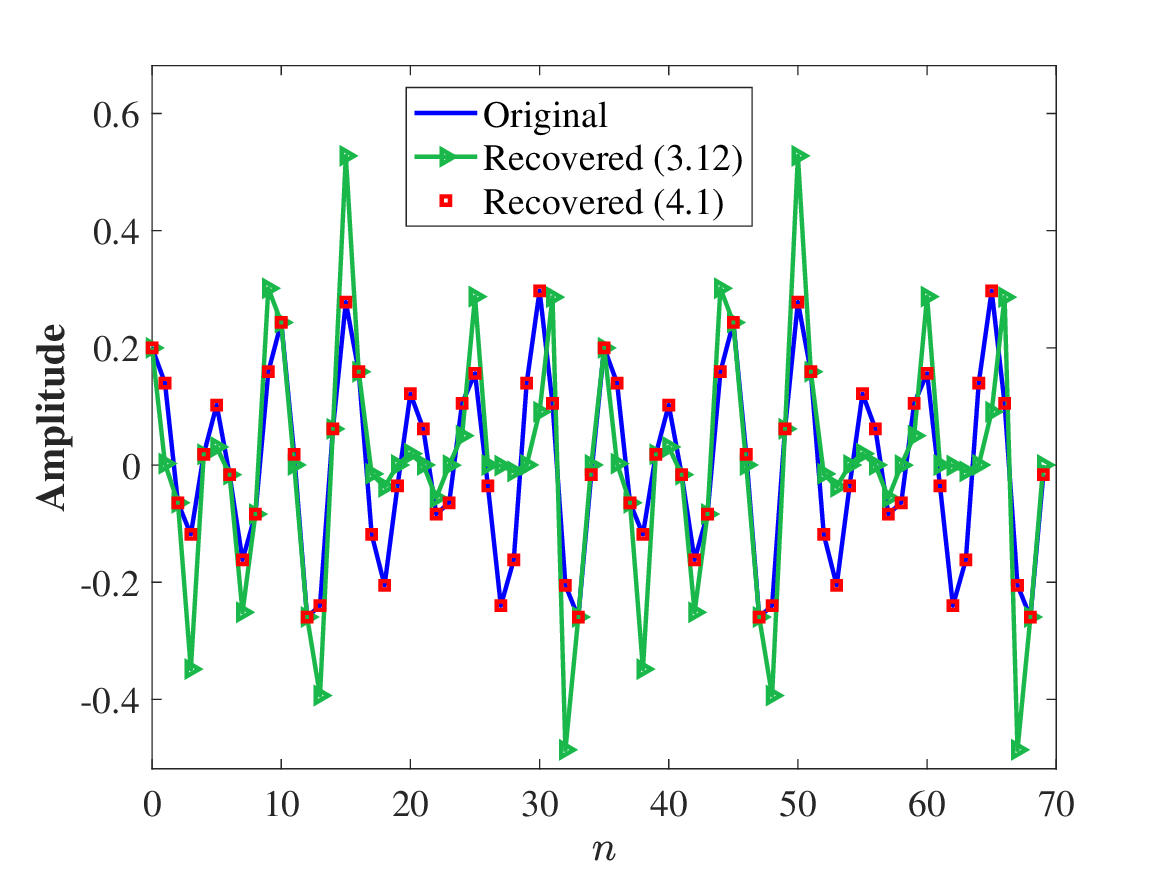}
			\label{fig:1thm1}
			%	 	\begin{minipage}{0.9\textwidth}
				%	 		\begin{subfigure}{0.47\textwidth}
					%	 			\centering
					%	 			\includegraphics[width=\textwidth]{5thm1}
					%	 			\label{fig:5thm1}
					%	 		\end{subfigure}
				%	 		\quad
				%	 		\begin{subfigure}{0.47\textwidth}
					%	 			\centering
					%	 			\includegraphics[width=\textwidth]{6thm1}
					%	 			\label{fig:6thm1}
					%	 		\end{subfigure}
				%	 	\end{minipage}
			\captionsetup{width=0.88\textwidth}
			\caption{Original signal (blue line) of length $N=70$ with periodic components 5 and 7, the solution of the optimization problem \eqref{opti1} (green line with triangular markers) for missing sample set \eqref{missingsampleset2}  with SNR gain  -8.4547 dB, and  the solution of the optimization problem \eqref{opti1mod} for missing sample set \eqref{missingsampleset2} (red square markers) with SNR gain 197.3205~dB.}
			\label{fig:thm1_plots}
		\end{figure}
		%In some scenarios, it may be possible that the  periodic components of the signal are not known in prior, but we assume  it is known that the period of the given signal is a divisor of $N.$ The question arises: 
		%\textit{Is the reconstruction with high accuracy still possible if more number of samples are missing?} The answer to this question is yes. In this case the information of the periodic components can be extracted from $\mc T_{\mc J}x$ under the condition that samples available in $\mc T_{\mc J}x$ are enough to accurately predict the periodic components of the original signal. The filter index versus energy plots for the truncated sum $\mc T_{\mc J}x$ corresponding to the missing sample sets listed in Table \ref{table:missing} are shown in Figure ~\ref{fig:pmxplots}(a)-(e).

In some scenarios, there
	may be no a priori knowledge of the periodic components of a signal. This raises the question: 
		\textit{Is it still possible to recover the original signal $x$ from $\mc T_{\mc J}x$ with high-accuracy  if a significant amount of samples are missing?} The answer to this question is ``yes". In such cases, the periodic components can be extracted from \(\mc T_{\mc J}x\), provided  the available samples in \(\mc T_{\mc J}x\) are sufficient to accurately predict the periodic components of the original signal and the  period of the signal is a divisor of \(N\). The energy outputs for the truncated sum \(\mc T_{\mc J}x\), corresponding to the missing sample sets listed in Table \ref{table:missing}, are shown in Figure~\ref{fig:pmxplots}.
		\begin{figure}[!htb]
			\centering
			\begin{minipage}{1.3\textwidth}
				\begin{subfigure}{0.265\textwidth}
					\includegraphics[width=\textwidth]{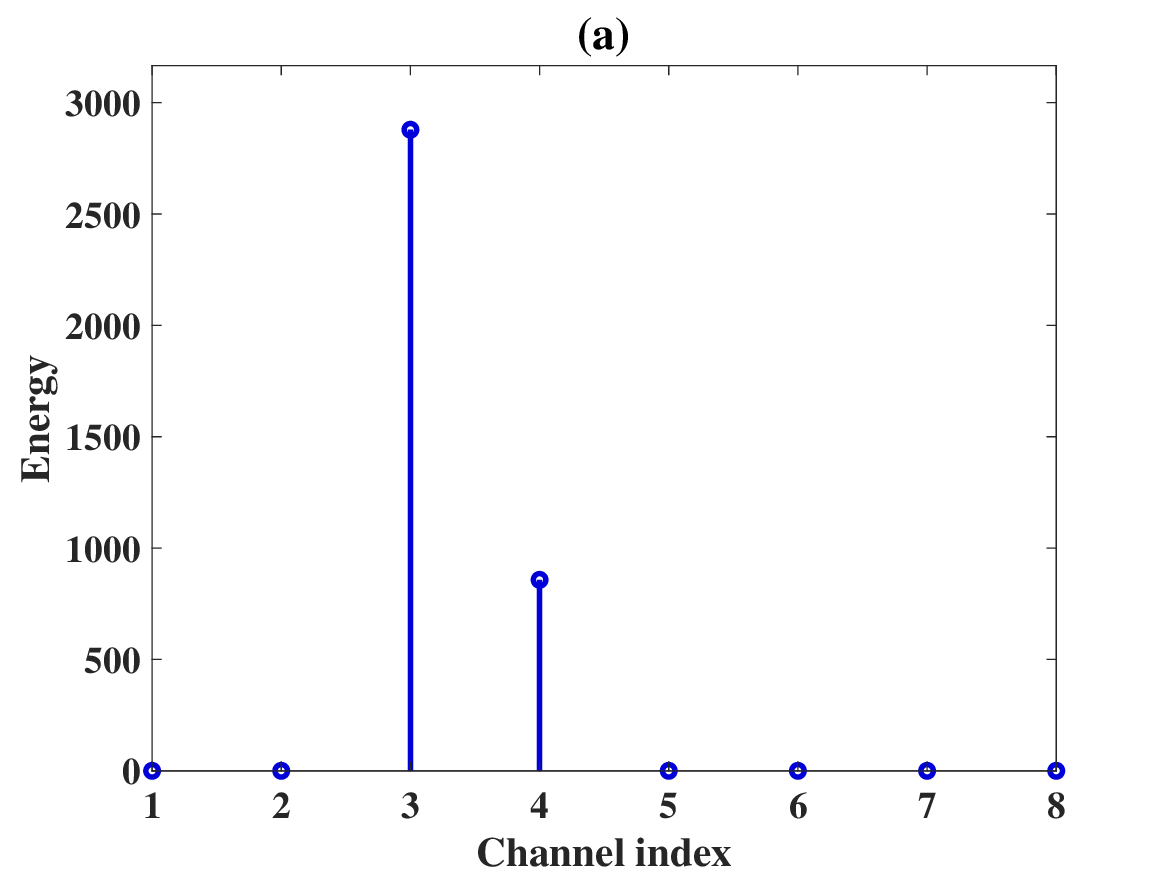}
					\label{fig:pmx1}
				\end{subfigure}
				\begin{subfigure}{0.265\textwidth}
					\includegraphics[width=\textwidth]{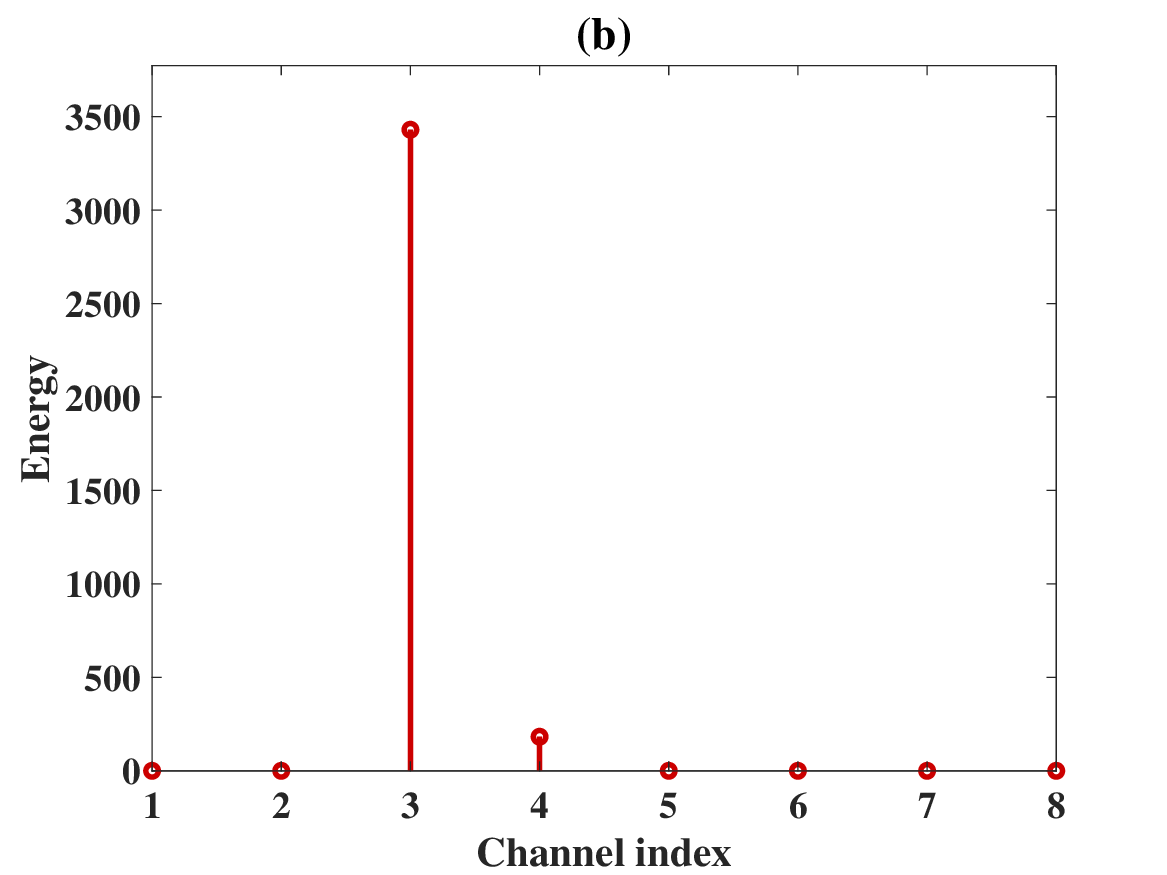}
					\label{fig:pmx2}
				\end{subfigure}
				\begin{subfigure}{0.265\textwidth}
					\includegraphics[width=\textwidth]{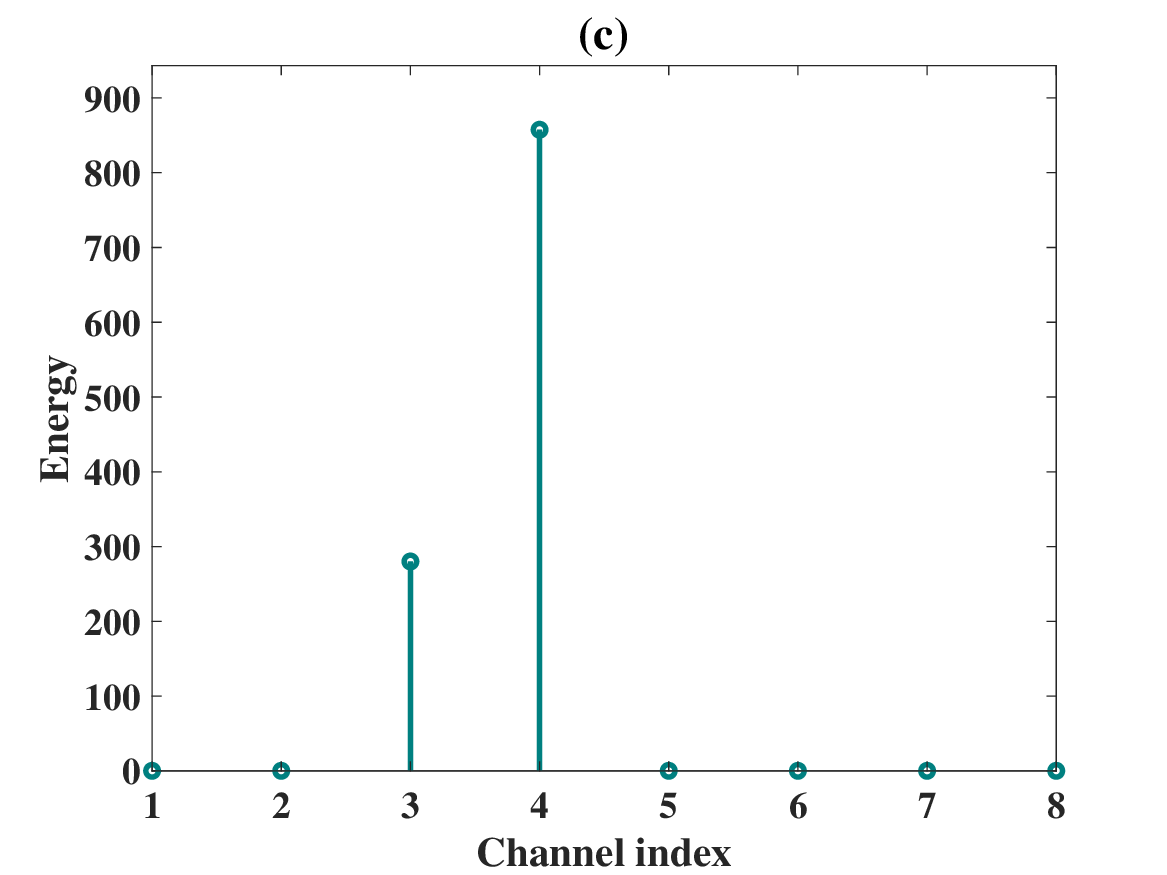}
					\label{fig:pmx3}
				\end{subfigure}
			\end{minipage}
			\begin{minipage}{1.3\textwidth}
				\begin{subfigure}{0.265\textwidth}
					\includegraphics[width=\textwidth]{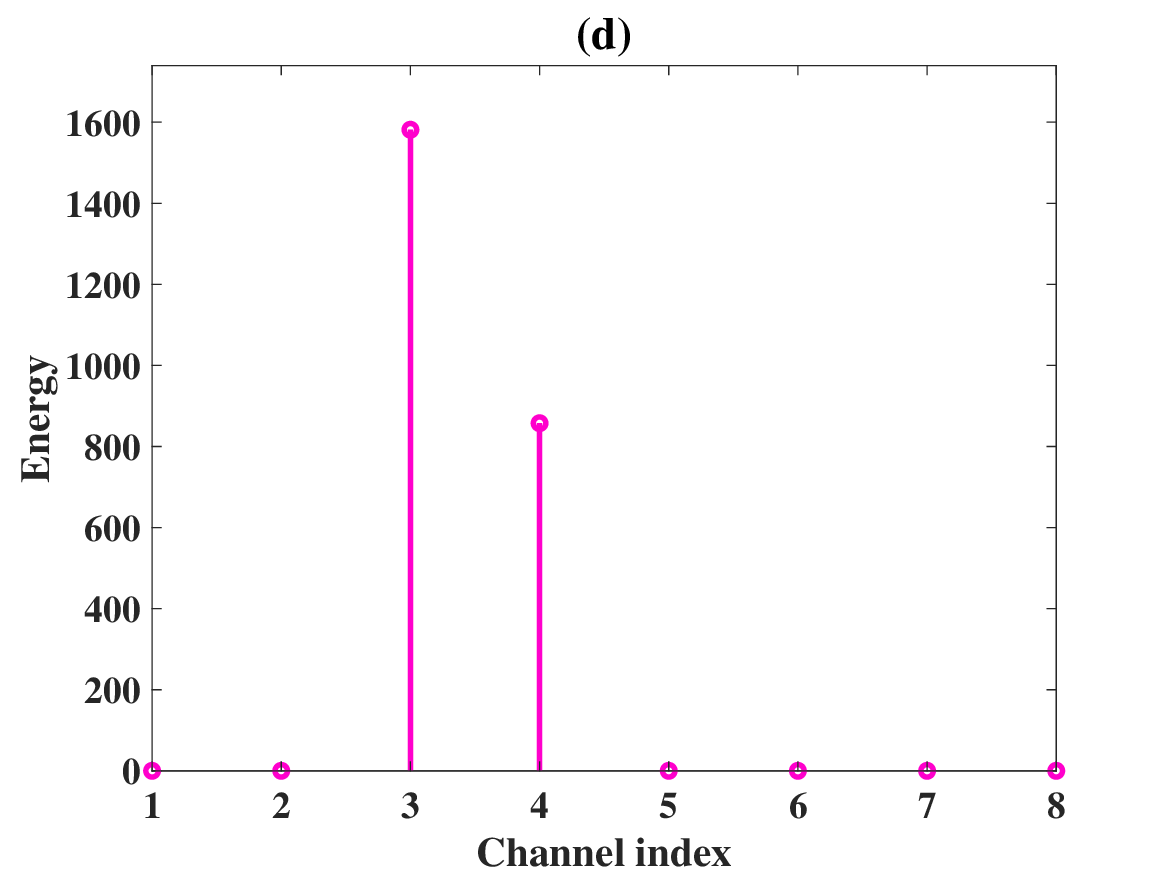}
					\label{fig:pmx4}
				\end{subfigure}
				\begin{subfigure}{0.265\textwidth}
					\includegraphics[width=\textwidth]{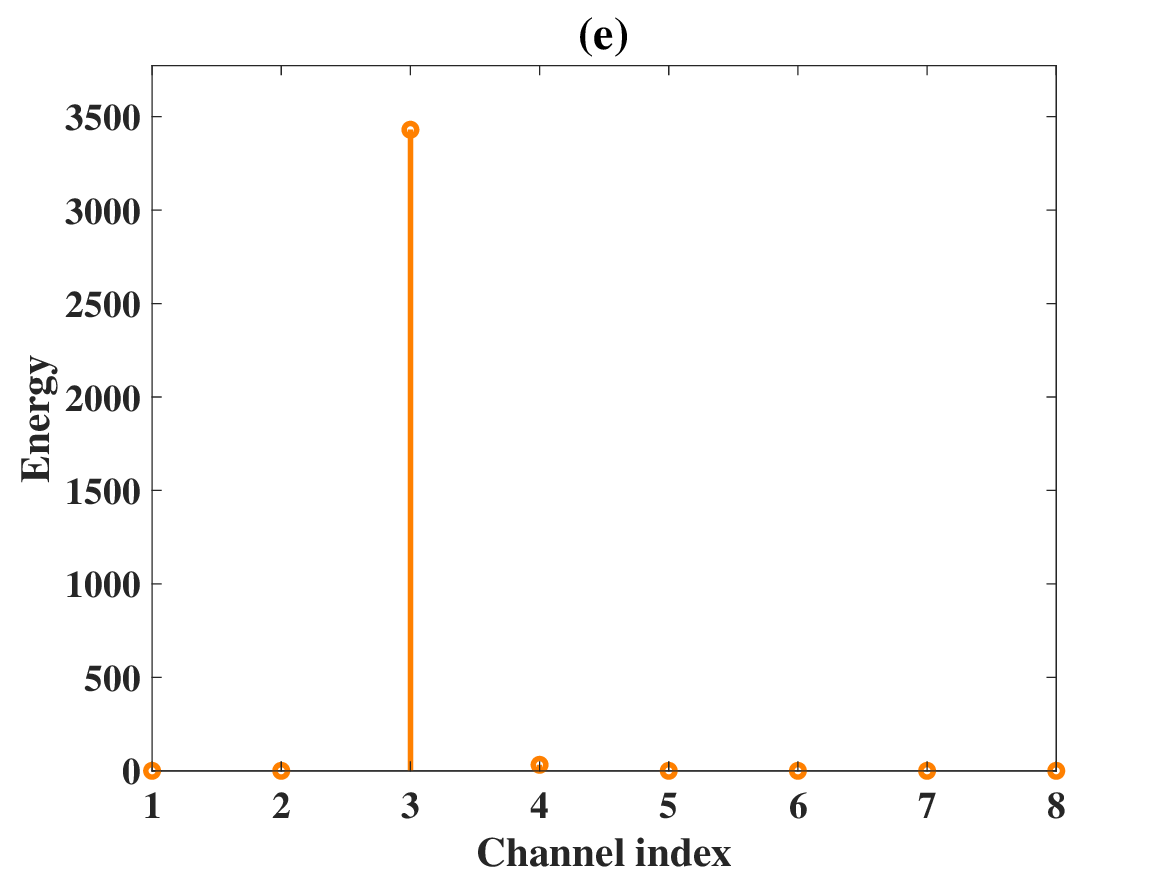}
					\label{fig:pmx5}
				\end{subfigure}
				\begin{subfigure}{0.265\textwidth}
					\includegraphics[width=\textwidth]{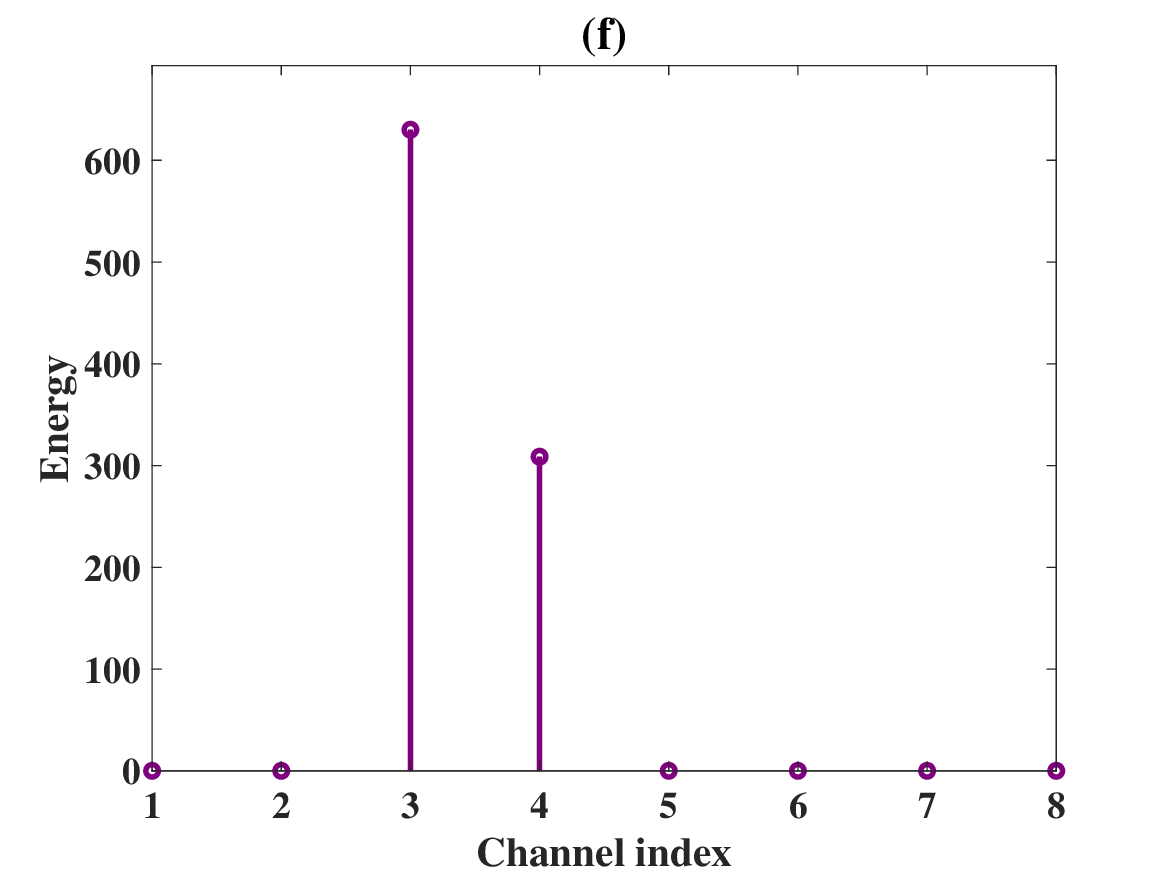}
					\label{fig:pmx6}
				\end{subfigure}
			\end{minipage}
			% 	\begin{minipage}{0.9\textwidth}
				% 		\begin{subfigure}{0.47\textwidth}
					% 			\includegraphics[width=\textwidth]{pmx5}
					% 			\label{fig:pmx5}
					% 		\end{subfigure}
				% 		\quad
				% 		\begin{subfigure}{0.47\textwidth}
					% 			\includegraphics[width=\textwidth]{pmx6}
					% 			\label{fig:pmx6}
					% 		\end{subfigure}
				% 	\end{minipage}
			% 	
			\captionsetup{width=0.88\textwidth}
			\caption{Energy outputs for  \,$\mathcal{T}_{\mathcal{J}}x$ for various choices of $\mc J$  
				associated with missing sample sets listed in Table \ref{table:missing}.}
			\label{fig:pmxplots}
		\end{figure}
		%From Figure ~\ref{fig:pmxplots}, it is clear that $\mc T_{\mc J}x$ in all the  cases  listed in Table \ref{table:missing}  is accurately identifying the periodic components $3$ and $5$ of the original signal except the case numbered 5 in the table since 
		%energy of the fourth channel, i.e., $c_5$ is very low as shown 
		%in Figure  \ref{fig:pmxplots}(e). This is due to the fact that the missing sample set  given in case 5 listed in  Table \ref{table:missing}  contains a major portion of the output i.e., $\{(k,3)\}_{k=5}^{34}$ from the Ramanujan sum $c_{5}.$ Thus, $\mc T_{\mc J}x$ will not not be able to predict a periodic component $P$ if the major portion of the output  from the Ramanujan sum $c_P$ is missed or lost.
		%To conclude, if more number of samples are missing, information of the periodic components of the original signal  can help to increase SNR gain.
		It is evident from the figure that $\mc T_{\mc J}x$ accurately identifies the periodic components $q_3=5$ and $q_5=7$ of the original signal in all cases listed in Table \ref{table:missing}, except for  case $5$. In this instance, the energy from the fourth channel, corresponding to $c_7$, is notably low, as shown in Figure \ref{fig:pmxplots}(e). This occurs because the missing sample set for case $5$, specified in Table \ref{table:missing}, includes a significant portion of the output $\{(k,4)\}_{k=6}^{34}$, from the Ramanujan sum $c_7$. Consequently, $\mc T_{\mc J}x$ cannot reliably predict a periodic component $P$ if a substantial portion of the output from the corresponding Ramanujan sum $c_P$ is missing. 
		
		In summary, when more samples are missing, utilizing information about the periodic components of the original signal can  enhance the SNR gain.
		%	For example, the missing sample set \eqref{missingsampleset1} is accurately identifying the periodic components of the original signal as shown in fig but on the other hand the missing sample set  \eqref{missingsampleset2} is not able to identify the periodic component 3 present in the signal as ahown in Figure This is due to the fact that the missing sample set \eqref{missingsampleset2}, contains a major portion of the input from Ramanujan filter bank corresponding to the filter $c_3.$ Thus, $P_{\mc M}x$ hardly have any relevant information to predict the 3 periodic component. To conclude, if more number of samples are missing, information of periodic components can help to increase SNR gain.
		\subsection{Application in Denoising}\label{sub:noisy}
		
		In this subsection, we explore the recovery of the original signal \( x \in \ell^2(\mathbb{Z}_N) \) from the noisy signal \( y = x + \eta \) using a filter bank based on Ramanujan sums corresponding to the divisors of \( N \). Additionally, we examine the relationship between the condition \eqref{condtn:2nmnt} in our denoising result, Theorem \ref{thm:recovfromnoisy}, and the SNR gain for various signals.

		For recovery of the original signal $x$ from noisy signal $y$ using Theorem \ref{thm:recovfromnoisy},  $x \in \mc S_{\mc M}$ for some $\mc M \subset \mc J_d \times \mc I_K.$ In practice, it is possible that $\mc S_{\mc M}$ is unknown.  In such scenarios, a filter bank based on Ramanujan sums \( c_{q_1}, c_{q_2}, \ldots, c_{q_K} \), where \( K \) denotes the number of divisors of \( N \), can be employed to recover the support set \( \mathcal{S}_{\mathcal{M}} \) from the observation \( y \) under the assumption that the (unknown) period \( P \) of the signal satisfies \( P \mid N \).
	To this end, consider a signal \( x \in \ell^2(\mathbb{Z}_N) \), with the assumption that its (unknown) period \( P \) is a divisor of \( N \). Let the noisy signal $y=x+\eta$ be input to the  filter bank with channels based on  Ramanujan sums $c_{q_i}, i\in \mc I_K.$   Under moderate noise conditions, it is reasonable to expect that the channels corresponding to divisors of $P,$ say,  \(q_1, q_2, \dots, q_m\) will produce outputs with significantly higher energies compared to the remaining channels in the filter bank due to Theorem \ref{thm: period}. The small non-zero outputs produced by other channels are primarily attributable to the presence of noise. Since \( P \mid N \), it follows that each of the divisors \( q_1, q_2, \ldots, q_m \) of \( P \) also divides \( N \).  To filter out  the low energies, we apply a threshold to the output energies. An appropriately chosen threshold should ideally preserve energies from those channels \(c_{q_1}, c_{q_2}, \dots, c_{q_m}\), which  would have yielded non-zero outputs if the noiseless signal \(x\) had been passed through the filter bank instead of the noisy signal \(y\). Once the divisors $q_i, 1\leq i \leq m$ are known, the set $\mc M$  is given by $\mc M = \{ (k,i) : k \in \mc J_d, \; q_i \in \{ q_1, q_2, \ldots, q_m \} \}$
		and	the set $\mc S_{\mc M}$ can be computed as follows:
	\[
	\mc S_{\mc M} = \left\{ x \in \ell^2(\mathbb{Z}_N) \mid (x' * c_q)(p k) = 0, \, k \in \mc J_d, \, \text{for } q \mid N \text{ with } q \neq q_1, q_2, \ldots, q_m \right\}.
	\]
		Finally, the denoised signal $\tilde{x}$ is given by:			
		\begin{equation}\label{eq:optiden}
			\tilde{x} = \arg \min\limits_{x'\in \mc S_{\mc M}}\|y-x'\|_1.
		\end{equation}
		
		Table \ref{table:noisy} presents cases of signals with length \( N = 30 \), each having different periodic components, which are input to the filter bank with Ramanujan sums \( c_1, c_2, c_3, c_5, c_6, c_{10}, c_{15}, c_{30} \), and a decimation ratio \( p = 1 \). The table demonstrates that as the number of periodic components in the signal increases, the SNR gain from the noisy signals (with varying SNR's) decreases. This drop in SNR gain is attributed to the increasing value of \( \#_{\mathcal{M}} \), which leads to \( 2 \#_{\mathcal{M}} \#_{\mathcal{T}} \) deviating from the bound \( p \left( \frac{d}{\phi(N)} \right)^2 \), as shown in Table \ref{table:noisy}.
		
		\begin{table}[htbp]
			\centering
		\begin{tabular}{|c|p{3.0cm}|p{4.3cm}|c|c|c|}		\hline
				\textbf{\small{S.No.}} & \textbf{\small{True Periodic}} & \textbf{\small{Estimated Components}} & \textbf{$2\#_{\mc M}\#_{\mc T}$} &\textbf{\small{SNR (dB)}}& \textbf{\small{SNR gain (dB)}} \\
				& \textbf{\small{Components}} & \textbf{\small(via Filter Energies)} & &(\bf{\small{Noisy Signal}})& \textbf{(\ref{eq:optiden})} \\
				\hline
				1. & 1,3 & 3 & $60\#_{\mc T}$ &0.0007& 11.1245 \\
				2. & 3,5 & 3,5 & $120\#_{\mc T}$ &0.0006& 9.4778 \\
				3. & 2,15 & 2,15 & $120\#_{\mc T}$&0.0009& 6.7987 \\
				4. & 3,5,10 & 5,10 & $120\#_{\mc T}$&0.0004& 6.6875 \\
				5. & 1,2,3,6 & 3,6 &$120\#_{\mc T}$ &0.0008& 6.3936 \\
				6. & 1,3,5,6,10 & 5,6,10,30 & $240\#_{\mc T}$ &0.0005& 1.4456 \\
				7. & 2,3,5,6,10,15 & 5,6,10,15,30 & $300\#_{\mc T}$ &0.0008& 0.3772 \\
				\hline
			\end{tabular}
			\vspace{2mm}
			\caption{Comparison of  SNR gains from optimization problem \eqref{eq:optiden} with the values $2\#_{\mc M}\#_{\mc T}$ corresponding to the signals with various periodic components.}
			\label{table:noisy}
		\end{table}
		%	By applying a threshold to the output energies, it is possible that some of the lower-energy harmonics of the signal produce smaller energy outputs at their corresponding filters compared to the noise energy at other filters. For instance, in the case of a period-6 signal containing both period-2 and period-3 components, the period-2 component may exhibit relatively low energy and could be suppressed by the threshold. To address this situation, we can incorporate the divisors of the indices \(q_1, q_2, \dots, q_K\). Specifically, if $q_i$ has divisors \(r_{i1}, r_{i2}, \dots, r_{im_i}\), the set \(S_{\mc M}\) is modified as follows:
		%	\[
		%	\mc S_{\mc M_{mod}} = \left\{ x \in \ell^2(\mathbb{Z}_N) \mid (x^{\prime} \ast c_{q}) = 0 \text{ for } q \mid N \text{ and } q \neq r_{i1}, r_{i2}, \dots, r_{im_i}, \text{ for } 1 \leq i \leq m \right\}.
		%	\]
		%The modified form of optimization algorithm \eqref{eq:optiden} beomes: 
		%	\begin{equation}\label{eq:optidenmod}
			%	\tilde{x} = \arg \min\limits_{x'\in \mc S_{\mc M_{mod}}}\|y-x'\|_1.
			%\end{equation}

			We explain the denoising process using an example.
				\begin{figure}[!b]
				\centering
				\begin{minipage}{0.9\textwidth}
					\begin{subfigure}{0.5\textwidth}
						\centering
						\includegraphics[width=\textwidth]{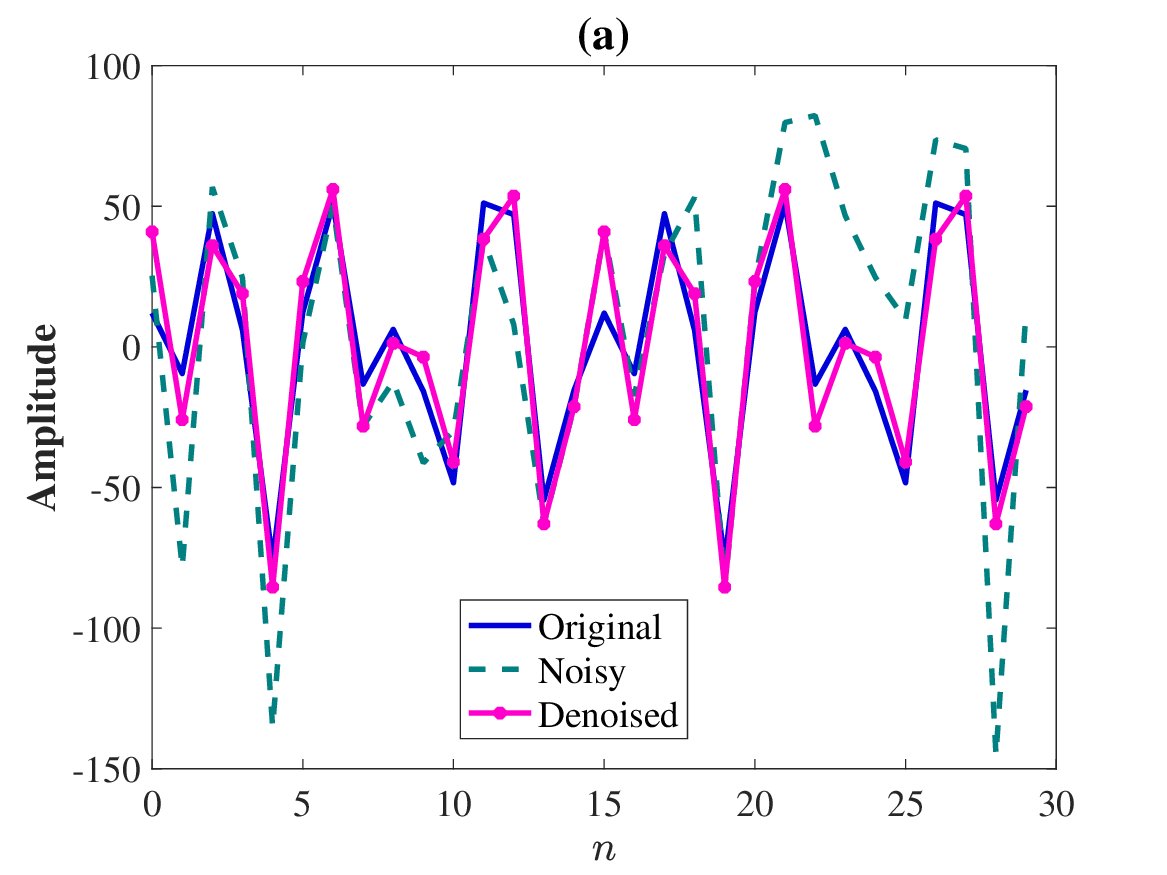}
						\label{fig:1thm2}
					\end{subfigure}
					\quad
					\begin{subfigure}{0.5\textwidth}
						\centering
						\includegraphics[width=\textwidth]{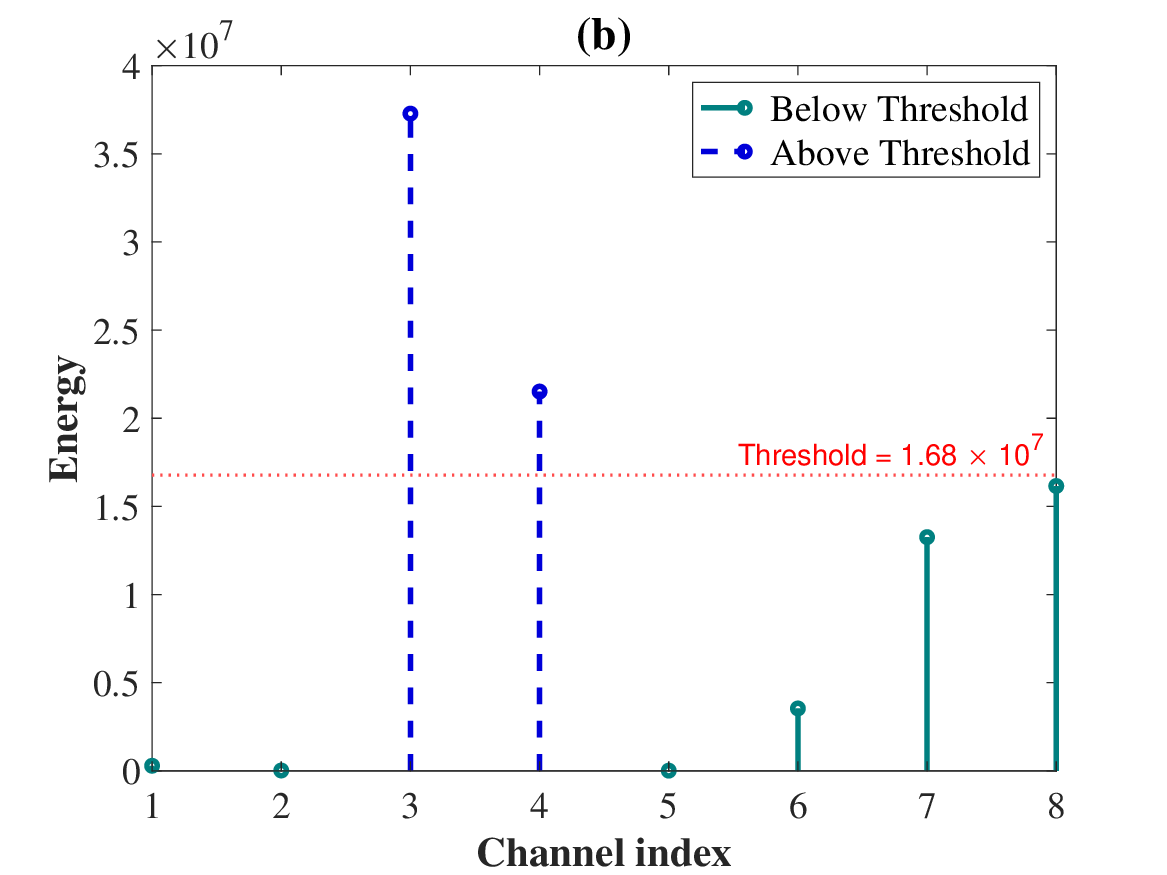}
						\label{fig:2thm2}
					\end{subfigure}
				\end{minipage}
				%	 	\begin{minipage}{0.9\textwidth}
					%	 		\begin{subfigure}{0.47\textwidth}
						%	 			\centering
						%	 			\includegraphics[width=\textwidth]{5thm1}
						%	 			\label{fig:5thm1}
						%	 		\end{subfigure}
					%	 		\quad
					%	 		\begin{subfigure}{0.47\textwidth}
						%	 			\centering
						%	 			\includegraphics[width=\textwidth]{6thm1}
						%	 			\label{fig:6thm1}
						%	 		\end{subfigure}
					%	 	\end{minipage}
				\captionsetup{width=0.88\textwidth}
				\caption{(a)  Original signal (solid blue line) with length  $N=30,$  the noisy signal (dashed green line) obtained by adding white Gaussian noise with SNR $6 \times 10^{-4}$ dB  and   the denoised signal (solid magenta line with circular markers), i.e., the solution  of the optimization problem  \eqref{eq:optiden}  with SNR 9.4778 dB. (b) Energy output from each channel of the filter bank and the corresponding  threshold value.}
				\label{fig:thm2_plots}
			\end{figure} 
			
			Consider a signal \( x \) with length \( N = 30 \) depicted by the blue solid line in   Figure \ref{fig:thm2_plots}(a).  The divisors of \( N = 30 \) are \( q_1 = 1, q_2 = 2, q_3 = 3, q_4 = 5, q_5 = 6, q_6 = 10, q_7 = 15, \) and \( q_8 = 30 \), resulting in \( K = 8 \). We consider a filter bank with decimation ratio \( p = 1 \), where the \( i \)-th channel corresponds to the \( q_i \)-th Ramanujan sum \( c_{q_i} \) for \( 1 \leq i \leq 8 \). For \( p = 1 \), Theorem~\ref{thm:tight} ensures that \( \mc R_{1,30} \) forms a tight frame for \( \ell^2(\mathbb Z_{30}) \), thus satisfying the tight-frame condition of Theorem~\ref{thm:recovfromnoisy}.
			The green dashed line in Figure \ref{fig:thm2_plots}(a) shows the noisy signal obtained by adding white Gaussian noise with SNR $6 \times 10^{-4} $ dB.  The noisy signal is then passed through the filter bank and the energy of each channel is recorded. The threshold value is set to 0.45 times the maximum energy output of the filter bank. The energy output from each channel of the filter bank is illustrated 
			in Figure \ref{fig:thm2_plots}(b).  It is evident from the figure that the channels corresponding to  $c_3$  and  $c_5$ (indicated by the dashed blue lines) exhibit energies above the threshold  whereas the channels corresponding to $c_1,c_2, c_6, c_{10}, c_{15}$, and $c_{30}$ (indicated by the solid green  lines) are below the threshold value $1.68 \times 10^7.$ Consequently, the set $\mc M$ in this case is given by: $\mc M=\{(k,i): 0 \leq k \leq 29,\, i=3,4\}$ and
			the solution of the optimization problem \eqref{eq:optiden}  after taking 	$\mc S_{\mc M} = \{ x \in \ell^2(\mathbb{Z}_N) \mid (x' * c_q)(k) = 0, \; 0 \leq k \leq 29, \text{ for } q \mid 30 \text{ with } q \neq 3,5 \}$ is depicted in  Figure \ref{fig:thm2_plots}(a) using solid magenta line with circular markers. 
			
			From the figure, it is clear that the solution from the optimization problem \eqref{eq:optiden} resembles  the original signal, achieving an SNR gain of 9.4778 dB. However, the solution is not perfect as  the value $2 \#_{\mc M} \#_{\mc T}=120 \#_{\mc T}$ exceeds the bound $p(d/\phi(N))^2=7.031$ for any noise vector $n$ with $\#_{\mc T}>1.$ 
			
			\vspace{4mm}
		\noindent	\textbf{Acknowledgment:} The authors thank the anonymous reviewers for carefully reading the Manuscript and for their valuable comments and suggestions, which have helped to improve the quality of this work.  The authors are also thankful to Asik Doctor for fruitful discussions at the beginning of this work.

				\bibliographystyle{abbrv}
				\bibliography{Ramanujanrefinal}
				%%%%%%%%%%%%%%%%%%%%%%%%%%%%%%%%%%%%%%%%%%%%%%%%%%%%%%%%%%%%%%%%%%%%%
			\end{document}